\documentclass[11pt,a4paper]{article}
\usepackage{multirow}
\usepackage{graphicx}
\usepackage{subfigure}
 \usepackage{epsfig}
 \usepackage{epstopdf}
\usepackage[T1]{fontenc}
\usepackage{extarrows}
\usepackage{geometry}
\geometry{a4paper,scale=0.8}
\usepackage{amsbsy,amsmath,latexsym,amsfonts, epsfig, color, authblk, amssymb, graphics, bm}
\usepackage{epsf,slidesec,epic,eepic}
\usepackage{fancybox}
\usepackage{fancyhdr}
\usepackage{setspace}
\usepackage{nccmath}
\usepackage{extarrows}
\usepackage{multirow}
\usepackage{extarrows}
\usepackage{graphicx}
\usepackage{algorithm}
\usepackage{algorithmic}
\usepackage{color}
\usepackage{setspace}
\setlength{\abovecaptionskip}{0pt}
\setlength{\belowcaptionskip}{0pt}
\usepackage{subfigure}
\usepackage{graphicx}
\usepackage{amsmath}
\usepackage{booktabs}
\usepackage{multirow}
\usepackage{cases}
\usepackage[colorlinks, citecolor=blue]{hyperref}

\newtheorem{theorem}{Theorem}[section]
\newtheorem{lemma}{Lemma}[section]
\newtheorem{definition}{Definition}[section]

\newtheorem{proposition}{Proposition}[section]

\newtheorem{assumption}{Assumption}
\newtheorem{remark}{Remark}[section]

\usepackage[figuresright]{rotating}
\usepackage{adjustbox}
\newenvironment{proof}{{\noindent \bf Proof:}}{\hfill$\Box$\medskip}

\definecolor{lred}{rgb}{1,0.8,0.8}
\definecolor{lblue}{rgb}{0.8,0.8,1}
\definecolor{dred}{rgb}{0.6,0,0}
\definecolor{dblue}{rgb}{0,0,0.5}
\definecolor{dgreen}{rgb}{0,0.5,0.5}

\begin{document}

 \title{A Globalized Semismooth Newton Method for Prox-regular Optimization Problems}

\author{
 Yuqia Wu\footnote{Department of Mathematics Sciences, Shenzhen University, 
 	China (\href{mailto:mayuqiawu@szu.edu.cn}{mayuqiawu@szu.edu.cn}).} ,\ \ 
 Pengcheng Wu\footnote{Department of Mathematics, Soochow University, China (\href{mailto:pcwu0725@163.com}{pcwu0725@163.com}).},\ \
 Yaohua Hu\footnote{Department of Mathematics Sciences, Shenzhen University, 
 	China (\href{mailto:mayhhu@szu.edu.cn}{mayhhu@szu.edu.cn}).},\ \
 Shaohua Pan\footnote{School of Mathematics, South China University of Technology, China (\href{mailto:shhpan@scut.edu.cn}{shhpan@scut.edu.cn}).} ,\ \
 Xiaoqi Yang\footnote{Department of Applied Mathematics, The Hong Kong Polytechnic University, 
 	Hong Kong (\href{mailto:mayangxq@polyu.edu.hk}{mayangxq@polyu.edu.hk}).} .
 }

 \maketitle
 \begin{abstract}
     We are concerned with a class of nonconvex and nonsmooth composite optimization problems, comprising a twice differentiable function and a prox-regular function. We establish a sufficient condition for the proximal mapping of a prox-regular function to be single-valued and locally Lipschitz continuous. By virtue of this property, we propose a hybrid of proximal gradient and semismooth Newton methods for solving these composite optimization problems, which is a globalized semismooth Newton method. The whole sequence is shown to converge to an $L$-stationary point under a Kurdyka-{\L}ojasiewicz exponent assumption. Under an additional error bound condition and some other mild conditions, we prove that the sequence converges to a nonisolated $L$-stationary point at a superlinear convergence rate. 
     Numerical comparison with several existing second order methods reveal that our approach performs comparably well in solving both the $\ell_q(0<q<1)$ quasi-norm regularized problems and the fused zero-norm regularization problems.
 \end{abstract}

 \section{Introduction}\label{sec1.0}
 
 We are interested in the following nonsmooth and nonconvex composite optimization problem 
 \begin{equation}\label{model}
  \min_{x\in\mathbb{R}^n} F(x):= f(x) + g(x).
 \end{equation}
 We assume that $F$ satisfies the following assumptions.

 \begin{assumption}\label{ass-g}
 {\rm (i)} $f\!:\mathbb{R}^n\to\overline{\mathbb{R}}\!:=\mathbb{R}\cup\{\infty\}$ is twice differentiable on an open set $\mathcal{O}\supset {\rm dom}g$, and $\nabla\!f$ and $\nabla^2\! f$ are Lipschitz continuous on $\mathcal{O}$, where the former has a Lipschitz constant $L$.

 \noindent
 {\rm (ii)} $g\!:\mathbb{R}^n\to \overline{\mathbb{R}}$ is proper and lsc, and $g$ has a computable proximal mapping.

 \noindent
 {\rm (iii)} 
 $F$ is lower bounded, i.e., $\underline{F}:=\inf_{x\in\mathbb{R}^n} F(x) >-\infty$.
 \end{assumption}

 Problem \eqref{model} frequently arises from sparse optimization, which has a wide range of applications in signal processing, statistics, machine learning, and so on. The function $g$ in Assumption \ref{ass-g} (ii) includes the $\ell_q$ quasi-norm $\|x\|_q^q$ with $q=\frac{1}{2}, \frac{2}{3}, 1$, the zero norm, SCAD, MCP (see \cite{xu12,xu23,fan2001variable,zhang2010nearly}), and the fused zero-norms \cite{wu23b}. 

 \subsection{Related works}\label{sec1.1}
 
 Recent years have witnessed active research interests in Newton-type methods for the general nonsmooth and nonconvex composite problem \eqref{model}.
 The proximal Newton-type method has been shown to be effective for tackling problem \eqref{model} with a convex or weakly convex $g$ (see, e.g., \cite{bertsekas1982projected, Lee14, Yue19, mordukhovich2023globally, Kanzow21,liu2024inexact}). The semismooth Newton method has been a favored second-order method for solving problem \eqref{model} or its special case, as evidenced in \cite{milzarek2014semismooth,xiao2018regularized, Themelis19,zhou2023revisiting, hu2022local, hu2023projected,ouyang2024trust}. Specifically, Ouyang and Milzarek \cite{ouyang2024trust} considered problem \eqref{model} with convex $g$, and proposed a trust region-type normal map-based semismooth Newton method, which achieves convergence of the sequence under the KL property of a merit function, and a superlinear convergence rate under mild conditions.  In addition, the second-order algorithms designed by the forward-backward envelope (FBE) of $F$ also have become popular ones; see \cite{stella17, Themelis18, Themelis19, themelis21, khanh2023globally, khanh2024coderivative}. 

 When the proximal mapping of $g$ is accessible, a natural residual mapping of \eqref{model} is the one from the fixed point form of the proximal gradient (PG) method, that is,
 \begin{equation}\label{eq-Rx}
 \mathcal{R}_{\gamma}(x)\!:= \gamma^{-1}\left(x-\mathcal{T}_{\gamma}(x)\right) \ \ {\rm with}\ \ \mathcal{T}_{\gamma}(x):= \mathcal{P}_{\gamma}g(x-\gamma \nabla \!f(x))\quad{\rm for}\ x\in\mathbb{R}^n,
 \end{equation}
 where $\mathcal{P}_{\gamma}g$ is the proximal mapping of $g$ associated with parameter $\gamma>0$; see \eqref{def-moreauenvelope} for its definition. To apply the semismooth Newton method to solve the generalized equation $0\in \mathcal{R}_{\gamma}(x)$, the key is the existence of the Clarke Jacobian of $\mathcal{R}_{\gamma}$ at the iterate, at which $\mathcal{R}_{\gamma}$ should be single-valued and locally Lipschitz continuous \cite{qi1993nonsmooth}. When $g$ is convex, $\mathcal{P}_{\gamma}g$ is single-valued and nonexpansive everywhere \cite[Proposition 12.19]{RW09}. By leveraging this property, for problem \eqref{model} with convex $f$ and $g$, Themelis et al. \cite{Themelis19} proposed a globally convergent semismooth Newton method to seek a root of the system $\mathcal{R}_{\gamma}(x)=0$, which achieves superlinear convergence by requiring that $F$ has a locally quadratic growth property at the limit point at which $\mathcal{R}_{\gamma}$ is strict differentiable, and restricting the length of the Newton directions. However, this method cannot be directly extended to solve \eqref{model} with nonconvex $g$ since now the single-valuedness and local Lipschitz continuity of $\mathcal{R}_{\gamma}$ cannot be guaranteed. Even if $g$ is prox-regular at $x$ for some $v \in \partial g(x)$, according to \cite[Proposition 13.37]{RW09}, there only exist a neighborhood $\mathcal{V}(x)$ of $x$ and a constant $\gamma_0>0$ such that $\mathcal{P}_{\gamma}g$ is single-valued and locally Lipschitz continuous at $z+\gamma v$ for all $z\in\mathcal{V}(x)$ and $\gamma\in(0,\gamma_0)$. This guarantees the single-valuedness and local Lipschitz continuity of $\mathcal{P}_{\gamma}g$ in \eqref{eq-Rx} at $x + \gamma v$ instead of $x-\gamma\nabla f(x)$, which prevents semismooth Newton method from solving the nonsmooth system $0\in\mathcal{R}_{\gamma}(x)$. To overcome this difficulty, Hu et al. \cite{hu2023projected} investigated problem \eqref{model} with a so-called strongly prox-regular $g$ to ensure the single-valuedness and local Lipschitz continuity of $\mathcal{R}_{\gamma}$ on ${\rm dom}\,g$. They proposed a globally convergent semismooth Newton method with a superlinear convergence rate under the BD-regularity of the limit point. It is worth mentioning that, a similar variant of the prox-regularity, called $r$-prox-regular, was also used in \cite{bareilles22} to prove the stability of the manifold identification. 

For convex composite problems, Khanh et al. \cite{khanh2023globally} proposed a coderivative-based Newton method, and obtained the superlinear convergence under the semismooth$^*$ property and the twice epi-differentiability of $\partial g$ at the limit. This method was extended to handle structured nonconvex optimization in \cite{khanh2024coderivative}, which seeks a critical point of \eqref{model} by solving the system $0\in\mathcal{R}_{\gamma}(x)$. Since the Newton direction is obtained by coderivative of $\mathcal{R}_{\gamma}$, its single-valuedness and local Lipschitz continuity is unnecessary. They established a superlinear convergence for a prox-regular $g$ by assuming that the limit point is a tilt-stable local minimum and $\partial g$ has the ${\rm semismooth}^*$ property. 
  It was pointed out in \cite[Eq. (2.2)]{gfrerer2021semismooth} that if $\mathcal{R}_{\gamma}$ is single-valued at $x$, then
 $$ {\rm conv} D^*\mathcal{R}_{\gamma}(x)(v)  = \{A^\top v\ | \ A\in  \partial_C\mathcal{R}_{\gamma}(x)\},$$
 where $D^*\mathcal{R}_{\gamma}(x)(v)$ denotes the coderivative of $\mathcal{R}_{\gamma}$ at $x$ for $v$. 
 From this point of view, the solvability of the generalized Newton systems at $(x,v) \in {\rm gph}\mathcal{R}_{\gamma}$ is somehow restricted. In fact, one of the sufficient conditions is the strong metric subregularity of $\mathcal{R}_{\gamma}$ at $(x, v)$ for some $v\in \mathcal{R}_{\gamma}(x)$; see \cite[Lemma 3.1]{khanh2024coderivative}. Recently, Gfrerer \cite{gfrerer2024globally} introduced from another perspective a globalized ${\rm semismooth}^*$ Newton method that does not require the single-valuedness of $\mathcal{R}_{\gamma}$. This method achieves the superlinear convergence under the assumptions of SCD regularity and SCD ${\rm semismooth}^*$ property of $\partial F$ at the origin. These two assumptions are respectively weaker than metric regularity and ${\rm semismooth}^*$ property, but similar to other generalized Newton methods, the well-definedness of the generalized Newton system still requires strong conditions.

 We are interested in designing a globalized semismooth Newton method for \eqref{model} by solving the system $0 \in \mathcal{R}_{\gamma}(x)$ that owns global and superlinear convergence simultaneously, by exploring the sufficient conditions of the existence of Clarke Jacobian of $\mathcal{R}_{\gamma}$. It is worth emphasizing that the function $g$ is not strongly prox-regular as defined in \cite{hu2023projected}, so its proposed algorithm is inapplicable to our problem \eqref{model}. In fact, to ensure the existence of the Clarke Jacobian of the system is a challenging task.
 
 \subsection{Main contribution}\label{sec1.2}
 
 This paper aims at developing a globally convergent semismooth Newton method to seek an $L$-stationary point of problem \eqref{model}, by employing the residual function induced by the proximal mapping of $g$. To guarantee the global performance of semismooth Newton method, following the idea in \cite{Themelis19}, we incorporate the PG to safeguard the iterates generated from semismooth Newton method. Our algorithm is a hybrid of PG and semismooth Newton methods, and will generate two sequences $\{x^k\}_{k\in\mathbb{N}}$ and $\{y^k\}_{k\in\mathbb{N}}$, where the former is from the PG step, and the latter comes from the semismooth Newton step. 

 At the current iterate $y^k$, a PG step with line search is first conducted to produce $x^k\in \mathcal{T}_{\gamma_k}(y^k)$, where $\mathcal{T}$ is defined in \eqref{eq-Rx} and $\gamma_k>0$ is the step-size. Then, we would use a semismooth Newton step to solve 
  \begin{equation}\label{system-Rx}
  0\in \mathcal{R}_{\gamma_k}(x).
 \end{equation}
 Consider that $\mathcal{R}_{\gamma_k}$ may not be single-valued and locally Lipschitz continuous at $x^k$, for which its Clarke generalized Jacobian is ill-defined. For this reason, we search for some $\widetilde{\gamma}_k>0$ to ensure that $\mathcal{R}_{\widetilde{\gamma}_k}$ is single-valued and locally Lipschitz continuous at $x^k$. The existence of such $\widetilde{\gamma}_k$ is due to Proposition \ref{Forder-Fgam}. 
 After that, the iteration enters the semismooth Newton step, which comprises a semismooth Newton step for system \eqref{system-Rx} to obtain $d^k \in \mathbb{R}^n$, and a line search between the segment of $x^k+d^k$ and $\mathcal{T}_{\widetilde{\gamma}_k}(x^k)$ to determine the next iterate $y^{k+1}$. The process is repeated until the termination condition is met.
 
 The main contributions of this work are summarized as follows.
 \begin{itemize}
 \item For a prox-bounded function $h\!:\mathbb{R}^n\to\overline{\mathbb{R}}$, we prove that if $h$ is prox-regular at $\overline{x}$, then for sufficiently small $\gamma>0, $ $\mathcal{P}_{\gamma}h$ is single-valued, locally Lipschitz continuous and monotone around $\overline{x}$ (see Proposition \ref{prop-proxmap}), which not only enhances the result of \cite[Proposition 13.37]{RW09} but also lays the cornerstone for our algorithm.   

 \item By leveraging the above result, we propose a hybrid of PG and semismooth Newton methods for solving problem \eqref{model}. 
 The two generated sequences are proved to converge to an $L$-stationary point, under the assumption that $F$ satisfies the KL property of exponent $\theta\in[\frac{1}{2},\frac{1}{2-\varrho}]$ for $\varrho\in(0,1)$, a stronger assumption than the KL property that is usually used for the establishment of global convergence. If in addition the limit point is a local minimum of $F$ and the residual function is metrically subregular at the limit point, then the distance sequence of $y^k$ from the $L$-stationary point set is shown to have a superlinear convergence rate under some mild conditions. As far as we know, this is the first work to establish the global and superlinear convergence of the iterate sequence generated by the globalized semismooth Newton method for solving the composite problem with nonconvex $g$.

 \item  
 Numerical experiments for the $\ell_q(0<q<1)$ quasi-norm regularized problem and fused zero-norm regularization problem show that the proposed algorithm is comparable with several existing second order methods. 
 \end{itemize}

  \subsection{Notation}
 For any $x\in\mathbb{R}^n$ and $\epsilon>0$, $\mathbb{B}(x,\epsilon):= \{ z\ | \ \|z-x\|\leq \epsilon\}$ denotes the ball centered at $x$ with radius $\epsilon.$ For a closed and convex set $C\subset \mathbb{R}^n$, we denote by $\mathcal{N}_{C}(x)$ the normal cone of $C$ at $x$. For a set $D \subset \mathbb{R}^n$, ${\rm proj}_{D}(z)$ denotes the projection of $z$ onto $D$, and ${\rm conv}(D)$ is the convex hull of $D$. For any $k\in \mathbb{N}$ and $x\in\mathbb{R}^n$, we write $[k]:= \{1,2,..., k\}$, ${\rm supp}(x) = \{i \ | \ x_i \neq 0\}$ and $|x|_{\min} = \min\{
 |x_i|\ | \ i\in{\rm supp}(x)\}$. $I$ and ${\bf 1}$ are the identity matrix and the vector of ones, respectively, whose dimensions are adaptive to the context. Given a symmetric matrix $A$, the spectral norm of $A$ is denoted by $\|A\|_2$, and the smallest eigenvalue of $A$ is written by $\lambda_{\min}(A)$. 
  
 \section{Preliminaries}\label{sec2}
Suppose that $H\!:\mathbb{R}^n\rightarrow\mathbb{R}^m$ is a locally Lipschitz continuous function. According to Rademacher's theorem, $H$ is differentiable almost everywhere. Denote by $D_H$ the set on which $H$ is differentiable. We write the Jacobian matrix of $H$ at $x\in D_H$ by $H'(x)$, and define the $B$-subdifferential as in \cite{pang1993nonsmooth} by
$$\partial_BH(x):=\{V\in\mathbb{R}^n \ | \ \lim_{x^k\rightarrow x, \ x^k\in D_H } H'(x^k) = V\}, $$
and its Clarke generalized Jacobian by $\partial_CH(x)  = {\rm conv}(\partial_BH(x))$.
For a proper function $h:\mathbb{R}^n\rightarrow\overline{\mathbb{R}}$, its proximal mapping $\mathcal{P}_{\gamma} h$ and Moreau envelope $e_{\gamma} h$ associated with a parameter $\gamma>0$ are respectively defined as 
 \begin{equation}\label{def-moreauenvelope}
 \mathcal{P}_{\gamma} h(x)\!:=\mathop{\arg\min}_{z\in\mathbb{R}^n}\Big\{  (2\gamma)^{-1}\|z-x\|^2+h(z)\Big\},\ e_{\gamma} h(x)\!:=\!\inf_{z\in\mathbb{R}^n}\Big\{(2\gamma)^{-1}\|z-x\|^2+h(z)\Big\}.
 \end{equation}
 The function $h$ is said to be \textit{prox-bounded} if there exists $\gamma>0$ such that $e_{\gamma} h(x)>-\infty$ for some $x\in\mathbb{R}^n$ (see \cite[Definition 1.23]{RW09}). The supremum of the set of all such $\gamma$ is the threshold $\gamma_{h}$ of prox-boundedness for $h$. 
 
 For an extended real-valued function $h:\mathbb{R}^n \rightarrow \overline{\mathbb{R}}$ and a point $\overline{x} \in {\rm dom}h$, we denote the regular subdifferential of $h$ at $\overline{x}$ by $\widehat{\partial}h(\overline{x})$, and the general (limiting or Mordukhovich) subdifferential of $h$ at $\overline{x}$ by $\partial h(\overline{x})$, see \cite[Definition 8.3]{RW09}. Now we are in a position to introduce the $L$-stationary point 
 of problem \eqref{model}.

\begin{definition}\label{def-Lspoint}
 A vector $x\in\mathbb{R}^n$ is called an \textit{$L$-stationary point} of \eqref{model} if there exists $\gamma > 0$ such that $0\in \mathcal{R}_{\gamma}(x)$. 
\end{definition}

 If $g$ is assumed to be prox-regular and prox-bounded with threshold $\gamma_g$, from \cite[Lemma 7]{wu23b}, we know that $x$ is an $L$-stationary point of $F$ if and only if $x$ is a critical point of $F$ (i.e., $0\in\partial F(x)$). 
 \subsection{Proximal mapping of prox-regular functions}\label{sec2.2}

A prox-regular function was introduced by Poliquin and Rockafellar \cite{poliquin96}, whose proximal mappings have properties similar to a weakly convex function in a local sense. 
 \begin{definition}\label{def-proxregular}
 A function $h\!:\mathbb{R}^n \rightarrow \overline{\mathbb{R}}$ is said to be prox-regular at  $\overline{x}\in{\rm dom}\,h$ for $\overline{v}\in\partial h(\overline{x})$ if $h$ is locally lsc at $\overline{x}$ with $\overline{v}$, and there exist $\rho \geq 0$ and $\varepsilon > 0$ such that, for $(x,v)\in {\rm gph}\partial h$ with $\|x-\overline{x}\|<\varepsilon,\|v-\overline{v}\|<\varepsilon,h(x)<h(\overline{x})+\varepsilon$, we have
 \[
  h(x') \geq h(x) + v^{\top}(x' -x) - (\rho/2) \|x'-x\|^2\quad\forall \|x'-\overline{x}\|\le\varepsilon.
 \]
 $h$ is prox-regular at $\overline{x}$ if $h$ is prox-regular at $\overline{x}$ for all $v\in \partial h(\overline{x})$.
 \end{definition}

From \cite[Proposition 12.19]{RW09}, for proper convex function $h$, its proximal mapping $\mathcal{P}_\gamma h$ is single-valued and nonexpansive for any $\gamma > 0$. For general prox-bounded function $h$,
 from \cite[Proposition 13.37]{RW09}, if  
$h$ is prox-regular at $\overline{x}$ for $\overline{v}\in\partial h(\overline{x})$, then for all $\gamma>0$ small enough, there is a neighborhood of $\overline{x}+\gamma\overline{v}$ on which $\mathcal{P}_{\gamma} h$ is single-valued, Lipschitz continuous and monotone. This neighborhood depends on $\gamma,\overline{x}$ and $\overline{v}$. Next, by strengthening the prox-regularity of $h$ at $\overline{x}$ for $\overline{v}$ to be that of $h$ at $\overline{x}$, we prove that on a neighborhood of $\overline{x}$, independent of $\overline{v}$, $\mathcal{P}_{\gamma} h$ is also single-valued, Lipschitz continuous and monotone. This result will be employed to establish Proposition \ref{Forder-Fgam}, and its proof requires the following lemma. 
\begin{lemma}\label{uniform}
 Let $h\!:\mathbb{R}^n\to\overline{\mathbb{R}}$ be a proper, lsc and prox-bounded function. Then, $h$ is prox-regular at $\overline{x}$ if and only if for each $r>0$ there exist $\varepsilon>0$ and $\rho\in(0,\gamma_{h})$ such that, when $(x\!,\!v)\in{\rm gph}\,\partial h\ {\rm with}\ \|x\!-\!\overline{x}\|\!<\!\varepsilon,\,{\rm dist}(v,\partial h(\overline{x})\cap\mathbb B(0,r))\!<\!\varepsilon,\,h(x)\!<\!h(\overline{x})\!+\!\varepsilon$,
\begin{equation}\label{equa1-uniform1}
 h(x')>h(x)+\langle v,x'-x\rangle-(2\rho)^{-1}\|x'-x\|^2~~\mbox{for~all}~x'\not=x.
\end{equation} 
\end{lemma}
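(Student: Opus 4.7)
The plan is to address the two implications separately; $(\Leftarrow)$ is essentially routine while $(\Rightarrow)$ carries the substance, and the main challenge is to glue together a local argument (via prox-regularity) and a remote argument (via prox-boundedness) through carefully matched parameters.

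For $(\Leftarrow)$, fix any $\overline{v}\in\partial h(\overline{x})$ and pick $r>\|\overline{v}\|$, so that $\overline{v}\in K_r:=\partial h(\overline{x})\cap\mathbb{B}(0,r)$. Whenever $(x,v)\in{\rm gph}\,\partial h$ satisfies $\|x-\overline{x}\|<\varepsilon$, $\|v-\overline{v}\|<\varepsilon$, $h(x)<h(\overline{x})+\varepsilon$, the bound ${\rm dist}(v,K_r)\le\|v-\overline{v}\|<\varepsilon$ triggers \eqref{equa1-uniform1}, which is strictly stronger than the local inequality of Definition~\ref{def-proxregular} with parameter $1/\rho$; varying $\overline{v}$ yields prox-regularity of $h$ at $\overline{x}$.

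For $(\Rightarrow)$, fix $r>0$ and note that $K_r$ is compact, being a closed subset of $\mathbb{B}(0,r)$. For each $\overline{v}\in K_r$, Definition~\ref{def-proxregular} furnishes constants $\alpha_{\overline{v}}\ge 0$ and $\delta_{\overline{v}}>0$; I extract a finite subcover $K_r\subset\bigcup_j\mathbb{B}(\overline{v}_j,\delta_{\overline{v}_j}/2)$ and set $\delta_*:=\min_j\delta_{\overline{v}_j}$, $\alpha_*:=\max_j\alpha_{\overline{v}_j}$. Choose $\rho\in(0,\min\{1/\alpha_*,\gamma_h\})$, so $(2\rho)^{-1}>\alpha_*/2$, and impose $\varepsilon\le\delta_*/2$. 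Any $v$ with ${\rm dist}(v,K_r)<\varepsilon$ lies within $\delta_{\overline{v}_j}$ of some $\overline{v}_j$, and Definition~\ref{def-proxregular} applied at $\overline{v}_j$ gives, for every $x'\ne x$ with $\|x'-\overline{x}\|\le\delta_*$,
\begin{equation*}
 h(x')\ge h(x)+\langle v,x'-x\rangle-(\alpha_*/2)\|x'-x\|^2>h(x)+\langle v,x'-x\rangle-(2\rho)^{-1}\|x'-x\|^2,
\end{equation*}
so \eqref{equa1-uniform1} holds throughout the local regime.

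For the remote regime $\|x'-\overline{x}\|>\delta_*$ I invoke prox-boundedness: picking $\tilde\rho\in(\rho,\gamma_h)$, the quantity $m_0:=\inf_{z\in\mathbb{R}^n}\{h(z)+(2\tilde\rho)^{-1}\|z-\overline{x}\|^2\}$ is finite, giving $h(x')\ge m_0-(2\tilde\rho)^{-1}\|x'-\overline{x}\|^2$. Combined with the Young-type bound $\|x'-x\|^2\ge(1-\eta)\|x'-\overline{x}\|^2-(\eta^{-1}-1)\|x-\overline{x}\|^2$, for $\eta\in(0,1)$ small enough that $c:=(1-\eta)(2\rho)^{-1}-(2\tilde\rho)^{-1}>0$, the quantity $h(x')+(2\rho)^{-1}\|x'-x\|^2-h(x)-\langle v,x'-x\rangle$ is bounded below by $c\|x'-\overline{x}\|^2-(r+\varepsilon)\|x'-\overline{x}\|-C(\varepsilon)$ with $C(\varepsilon)\to 0^+$ as $\varepsilon\to 0^+$; this is positive once $\|x'-\overline{x}\|$ exceeds a threshold $t_0$ that decreases as $\rho\downarrow 0$. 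The main obstacle is the mismatch between $t_0$ and $\delta_*$: I will shrink $\rho$ further (still inside $(0,\min\{1/\alpha_*,\gamma_h\})$) to enlarge $c$ and force $t_0<\delta_*$, and shrink $\varepsilon$ to absorb $C(\varepsilon)$, so that the local and remote estimates splice seamlessly into \eqref{equa1-uniform1} for all $x'\ne x$.
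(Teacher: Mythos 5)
Your proof is correct, but the forward implication follows a genuinely different route from the paper's. The paper proves $(\Rightarrow)$ by contradiction: it assumes a violating sequence $\{(x^k,v^k),z^k\}$ with moduli $k/2$, extracts a convergent subsequence $v^k\to\overline{v}\in\partial h(\overline{x})\cap\mathbb{B}(0,\overline{r})$, and then invokes a cited result (\cite[Proposition 8.46(f)]{RW09} together with prox-boundedness) that already upgrades the prox-regularity inequality at $(\overline{x},\overline{v})$ to one valid for \emph{all} $x'\in\mathbb{R}^n$, from which the contradiction is immediate. You instead argue directly: uniformity in $v$ over $\partial h(\overline{x})\cap\mathbb{B}(0,r)$ is obtained by a finite-subcover argument on this compact set rather than by a subsequence extraction, and the globalization in $x'$ is carried out by hand, splicing the local prox-regularity estimate (for $\|x'-\overline{x}\|\le\delta_*$) with an explicit Moreau-envelope lower bound $h(x')\ge e_{\tilde\rho}h(\overline{x})-(2\tilde\rho)^{-1}\|x'-\overline{x}\|^2$ for distant $x'$. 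The paper's route is shorter because the globalization is outsourced to the cited proposition; yours is self-contained and makes visible exactly how prox-boundedness enters (through the coefficient comparison $c=(1-\eta)(2\rho)^{-1}-(2\tilde\rho)^{-1}>0$) and why $\rho$ must be taken below the threshold $\gamma_h$. The backward implication is essentially identical to the paper's.

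One claim in your remote-regime estimate is stated incorrectly, though it does not break the argument: the constant $C(\varepsilon)$ does \emph{not} tend to $0$ as $\varepsilon\to0^+$, since it contains the fixed nonnegative gap $h(\overline{x})-e_{\tilde\rho}h(\overline{x})$ (plus terms like $(2\rho)^{-1}(\eta^{-1}-1)\varepsilon^2$ that vanish only if $\varepsilon$ is tied to $\rho$). What your splicing step actually needs is merely that $C$ stays bounded while $c\to\infty$ as $\rho\downarrow0$ (with, say, $\varepsilon\le\min\{\delta_*/2,\rho\}$), which forces the positive root $t_0=O\bigl((r+\varepsilon)/c+\sqrt{C/c}\bigr)$ below $\delta_*$. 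Restate the claim in that form and fix the order of choices ($\eta,\tilde\rho$ first, then $\rho$, then $\varepsilon$) and the proof is complete.
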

\begin{proof}
 $\Longrightarrow$. Suppose to the contrary that there exist $\overline{r}>0$ and a sequence $\{((x^k,v^k),z^k)\}_{k\in\mathbb{N}}\subset{\rm gph}\,\partial h\times\mathbb{R}^n$ with $\|x^k-\overline{x}\|<1/k$, ${\rm dist}(v^k,\partial h(\overline{x})\cap\mathbb{B}(0,\overline{r}))<\frac{1}{k}, h(x^k)<h(\overline{x})+\frac{1}{k}$ and $z^k\not=x^k$ such that for all $k\in\mathbb{N}$, 
\begin{equation}\label{uniform3}
 h(z^k)\le h(x^k)+\langle v^k,z^k-x^k\rangle-(k/2)\|z^k-x^k\|^2.
\end{equation}
 From ${\rm dist}(v^k,\partial h(\overline{x})\cap\mathbb{B}(0,\overline{r}))<1/k$, if necessary by taking a subsequence, we can assume that $v^k\to \overline{v}\in\mathbb{B}(0,\overline{r})\cap \partial h(\overline{x})$. 
 Then, from the prox-regularity of $h$ at $\overline{x}$ for $\overline{v}$, the prox-boundedness of $h$ and \cite[Proposition 8.46 (f)]{RW09}, there exist $\varepsilon>0$ and $\overline{\rho}\in(0,\gamma_{h})$ such that, whenever $(x,v)\in{\rm gph}\,\partial h$ with $\|v-\overline{v}\|<\varepsilon,\|x-\overline{x}\|<\varepsilon$ and $h(x)<h(\overline{x})+\varepsilon$, 
 \[
  h(x')\ge h(x)+\langle v,x'-x\rangle-(2\overline{\rho})^{-1}\|x'-x\|^2~~\mbox{for~all}~x'\in\mathbb{R}^n.
 \]
 Noting that $\overline{\rho}/2\in(0,\gamma_{h})$, along with the above inequality, it holds that
 \[
  h(x')>h(x)+\langle v,x'-x\rangle-\overline{\rho}^{-1}\|x'-x\|^2~~\mbox{for~all}~x'\ne x.
 \] 
 Recall that ${\rm gph}\,\partial h\ni (x^k,v^k)\!\to\!(\overline{x},\overline{v})$ and $h(x^k)\!<\!h(\overline{x})\!+\!1/k$ and $z^k\ne x^k$. For all $k$ large enough, we have
 $h(z^k)>h(x^k)+\langle v^k,z^k-x^k\rangle-\overline{\rho}^{-1}\|z^k-x^k\|^2$, which is a contradiction to \eqref{uniform3} for all $k>2/\bar\rho$.  The implication in this direction follows.

\noindent
$\Longleftarrow$. It suffices to consider that $\partial h(\overline{x})\ne\emptyset$. Pick any $\overline{v}\in\partial h(\overline{x})$ and set $r=\|\overline{v}\|$. Consider any $(x,v)\in{\rm gph}\,\partial h$ with $\|x-\overline{x}\|<\varepsilon$ and $h(x)<h(\overline{x})+\varepsilon$ and $\|v-\overline{v}\|<\varepsilon$. Note that ${\rm dist}(v,\partial h(\overline{x})\cap\mathbb B(0,r))\le\|v-\overline{v}\|<\varepsilon$. By Definition \ref{def-proxregular}, $h$ is prox-regular at $\overline{x}$ for $\overline{v}$. By the arbitrariness of $\overline{v}\in\partial h(\overline{x})$, $h$ is prox-regular at $\overline{x}$. 
\end{proof}
\begin{proposition}\label{prop-proxmap}
 Let $h\!:\mathbb R^n\to\overline{\mathbb R}$ be a proper, lsc and prox-bounded function. If $h$ is prox-regular at $\overline{x}\in{\rm dom}\,h$, then for all sufficiently small $\gamma>0$, $\mathcal{P}_{\gamma} h$ is locally single-valued, Lipschitz continuous and monotone around $\overline{x}$. 
\end{proposition}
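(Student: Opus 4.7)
The plan is to combine the uniform prox-regularity supplied by Lemma \ref{uniform} with a hypomonotonicity argument on $\mathcal{P}_\gamma h$. I would first pick some $\overline{v}_0\in\partial h(\overline{x})$ (the case $\partial h(\overline{x})=\emptyset$ can be dispatched separately via the local boundedness of $\mathcal{P}_\gamma h$ granted by prox-boundedness), set a radius $r:=\|\overline{v}_0\|+1$, and invoke Lemma \ref{uniform} to obtain $\varepsilon>0$ and $\rho\in(0,\gamma_h)$ such that inequality \eqref{equa1-uniform1} holds for every $(z,v)\in{\rm gph}\,\partial h$ with $\|z-\overline{x}\|<\varepsilon$, $h(z)<h(\overline{x})+\varepsilon$, and ${\rm dist}(v,\partial h(\overline{x})\cap\mathbb{B}(0,r))<\varepsilon$.

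Next I would fix $\gamma\in(0,\min\{\rho,\gamma_h\})$ sufficiently small and take a neighborhood $U$ of $\overline{x}$ of radius of order $\gamma$, then verify the three hypotheses above at every pair $(z,v)$ with $z\in\mathcal{P}_\gamma h(x)$, $x\in U$, and $v:=(x-z)/\gamma\in\partial h(z)$. The key computation is to combine the optimality inequality $h(z)+(2\gamma)^{-1}\|z-x\|^2\le h(\overline{x})+(2\gamma)^{-1}\|x-\overline{x}\|^2$ with the prox-regular lower bound \eqref{equa1-uniform1} applied at the pair $(\overline{x},\overline{v}_0)$, and then complete the square in $w:=z-\overline{x}$; this yields a quadratic estimate of the form $\|z-\overline{x}\|\le 2\rho(\|x-\overline{x}\|+\gamma\|\overline{v}_0\|)/(\rho-\gamma)$, which together with the choice of $U$ simultaneously gives $\|z-\overline{x}\|<\varepsilon$ and boundedness of $\|v\|$. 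The condition $h(z)<h(\overline{x})+\varepsilon$ follows from the optimality inequality directly, while the distance-to-$\partial h(\overline{x})$ estimate for $v$ is obtained from the outer semicontinuity of $\partial h$ at $\overline{x}$ by a standard contradiction argument, using the boundedness of $v$ and $h(z)\to h(\overline{x})$ as $x\to\overline{x}$.

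Once Lemma \ref{uniform} is available at every relevant pair $(z,v)$, the standard maximal-monotonicity trick closes the argument. For $z_i\in\mathcal{P}_\gamma h(x_i)$ and $v_i=(x_i-z_i)/\gamma$ with $i=1,2$, adding the inequality \eqref{equa1-uniform1} at $(z_1,v_1)$ tested against $z_2$ to the one at $(z_2,v_2)$ tested against $z_1$, and then substituting $v_i=(x_i-z_i)/\gamma$, produces $(1-\gamma/\rho)\|z_1-z_2\|^2<\langle x_1-x_2,z_1-z_2\rangle$. For $\gamma<\rho$ this single inequality delivers single-valuedness (take $x_1=x_2$), Lipschitz continuity with constant $\rho/(\rho-\gamma)$, and monotonicity simultaneously. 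The main obstacle I anticipate is a mild bootstrap issue in the verification step: the estimate on $\|z-\overline{x}\|$ is needed in order to apply Lemma \ref{uniform}, yet is itself derived from that very inequality; I would resolve this by shrinking $\gamma$ enough (and $U$ accordingly) that the a priori estimate on $\|z-\overline{x}\|$ is forced below $\varepsilon$ without circular dependence, or by an auxiliary compactness/continuation argument.
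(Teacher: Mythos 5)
Your proposal follows essentially the same route as the paper: both rest on Lemma \ref{uniform}, both verify its hypotheses at the proximal pairs $(z,(x-z)/\gamma)$ via the same a priori estimate (the optimality inequality for $z\in\mathcal{P}_{\gamma}h(x)$ combined with \eqref{equa1-uniform1} applied at a fixed subgradient of $h$ at $\overline{x}$, then an outer-semicontinuity/contradiction argument for the distance condition on $v$), and both arrive at the Lipschitz constant $\rho/(\rho-\gamma)$. The one genuine difference is the finishing move: the paper routes through a truncated hypomonotone multifunction $\Gamma_{\gamma}$, identifies $\mathcal{P}_{\gamma}h$ locally with the resolvent $(I+\gamma\Gamma_{\gamma})^{-1}$, and invokes \cite[Theorem 12.12]{RW09}, whereas you add the two instances of \eqref{equa1-uniform1} at $(z_1,v_1)$ and $(z_2,v_2)$ directly and read off single-valuedness, Lipschitz continuity and monotonicity in one stroke from $(1-\gamma/\rho)\|z_1-z_2\|^2<\langle x_1-x_2,z_1-z_2\rangle$; your version is more elementary and avoids the resolvent machinery. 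Two remarks. First, the circularity you worry about is not actually present: \eqref{equa1-uniform1} is applied at the pair $(\overline{x},\overline{v}_0)$, whose qualifying conditions hold trivially, and the resulting inequality is valid for every test point $x'$, so the bound $\|z-\overline{x}\|\le\frac{2\rho}{\rho-\gamma}\left(\|x-\overline{x}\|+\gamma\|\overline{v}_0\|\right)$ is a genuine a priori estimate requiring nothing about $z$. Second, your radius $r=\|\overline{v}_0\|+1$ is too small: for $x\in\mathbb{B}(\overline{x},c\gamma)$ and $\gamma<\rho/2$ the estimate only yields $\|v\|=\|x-z\|/\gamma<4\|\overline{v}_0\|+5c$, so the cluster points of these subgradients are guaranteed to lie in $\partial h(\overline{x})\cap\mathbb{B}(0,4\|\overline{v}_0\|+5c)$ but possibly outside $\mathbb{B}(0,r)$, and the contradiction argument establishing ${\rm dist}(v,\partial h(\overline{x})\cap\mathbb{B}(0,r))<\varepsilon$ can then fail. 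This is precisely why the paper sets $r=1+4\min_{v\in\partial h(\overline{x})}\|v\|$; enlarging your $r$ by the same factor of $4$ (plus slack) repairs the argument with no other change.
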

\begin{proof}
 Let $r\!:=1+4\min_{v\in\partial h(\overline{x})}\|v\|$. By Lemma \ref{uniform}, there exist $\varepsilon>0$ and $\rho\in(0,\gamma_h)$ such that \eqref{equa1-uniform1} holds, whenever $(x,v)\in{\rm gph}\,\partial h$ with $\|x-\overline{x}\|<\varepsilon,{\rm dist}(v,\partial h(\overline{x})\cap\mathbb B(0,r))<\varepsilon$ and $h(x)<h(\overline{x})+\varepsilon$. We complete the proof by the following two steps.

 \noindent
  {\bf Step~1}: There is $\widehat{\gamma}\in(0,\rho/2)$ such that for all $\gamma \in (0,\widehat{\gamma})$, $\mathcal{P}_{\gamma} h$ is single-valued at $\overline{x}$. In doing so, we first claim that for each $\gamma\in(0,{\rho}/{2})$, there exists $\overline{z}_{\gamma}\in\partial h(\overline{x}\!-\!\gamma\overline{z}_{\gamma})$ with $\|\overline{z}_{\gamma}\|\le r$. Fix any $\gamma\in(0,\rho/2)$. Note that $\mathcal{P}_{\gamma} h(\overline{x})\ne\emptyset$ because $\rho\in(0,\gamma_h)$. Pick any $\overline{x}_\gamma\in \mathcal{P}_{\gamma} h(\overline{x})$. By the definition of $\mathcal{P}_{\gamma} h(\overline{x})$, it holds that
 \begin{equation}\label{hxgam-ineq}
  h(\overline{x}_\gamma)+(2\gamma)^{-1}\|\overline{x}_\gamma-\overline{x}\|^2\leq h(\overline{x}).
 \end{equation}
 Let $v\in\partial h(\overline{x})$ be such that $\|v\|=\min_{u\in\partial h(\overline{x})}\|u\|$. Noting that $\|v\|<{r}/{4}$, we have $v\in\partial h(\overline{x})\cap\mathbb B(0,r)$. Using inequality \eqref{equa1-uniform1} with $x'=\overline{x}_{\gamma}$ and $x=\overline{x}$ leads to 
 \[
 h(\overline{x}_\gamma) \geq h(\overline{x})+\langle v,\overline{x}_\gamma-\overline{x}\rangle-(2\rho)^{-1}\|\overline{x}_\gamma-\overline{x}\|^2.
\]
 The above two inequalities imply 
 $\big(\frac{1}{2\gamma}-\frac{1}{2\rho}\big)\|\overline{x}_\gamma-\overline{x}\|^2\leq h(\overline{x})-h(\overline{x}_\gamma)-\frac{1}{2\rho}\|\overline{x}_\gamma-\overline{x}\|^2\leq \langle v,\overline{x}-\overline{x}_\gamma\rangle$, 
 which along with $\gamma\in(0,\rho/2)$ and $\|v\|\le r/4$ leads to $\|\overline{x}_\gamma-\overline{x}\|/\gamma\leq r$. Take $\overline{z}_\gamma:=(\overline{x}-\overline{x}_\gamma)/\gamma$. From $\overline{x}_\gamma\in \mathcal{P}_{\gamma} h(\overline{x})$, we get $\overline{z}_\gamma\in\partial h(\overline{x}_\gamma)=\partial h(\overline{x}\!-\!\gamma\overline{z}_{\gamma})$. 
 
 Next we claim that there exists $\widehat{\gamma} \in(0,\rho/2)$ such that for all $\gamma\in (0,\widehat{\gamma})$, 
\begin{equation}\label{dist-zbargamma}
 \|\overline{x}_{\gamma}-\overline{x}\|<\varepsilon/2\ \ {\rm and}\ \ {\rm dist}(\overline{z}_\gamma,\partial h(\overline{x})\cap\mathbb B(0,r))<\varepsilon/2.
\end{equation}
Indeed, from $\|\overline{x}_\gamma-\overline{x}\|\leq \gamma r$, it follows that $\lim_{\gamma\to 0}\overline{x}_\gamma=\overline{x}$, which along with the lsc of $h$ at $\overline{x}$ and inequality \eqref{hxgam-ineq} implies $\lim_{\gamma\to 0}h(\overline{x}_{\gamma})=h(\overline{x})$. To prove this claim, it suffices to argue that ${\rm dist}(\overline{z}_\gamma,\partial h(\overline{x})\cap \mathbb B(0,r))\to0$ as $\gamma\to0$. If not, there exist $\epsilon_0>0$ and a sequence $\gamma_k\to0$ such that for sufficiently large $k\in\mathbb{N}$, ${\rm dist}(\overline{z}_{\gamma_k},\partial h(\overline{x})\cap\mathbb B(0,r))>\epsilon_0$. 
 For each $k\in\mathbb{N}$, let $\overline{x}_{\gamma_k}:=\overline{x}-\gamma_k\overline{z}_{\gamma_k}$. Recall that $\overline{z}_{\gamma_k}\in\partial h(\overline{x}_{\gamma_k})$ and $\|\overline{z}_{\gamma_{k}}\|\le r$ for each $k\in\mathbb{N}$. By taking a subsequence if necessary, we can assume that $\overline{z}_{\gamma_{k}}\to\overline{z}$ with $\|\overline{z}\|\le r$ as $k\to\infty$. Recall that $\overline{x}_{\gamma_k}\to\overline{x}$ and $h(\overline{x}_{\gamma_k})\to h(\overline{x})$ as $k\to\infty$. Then, $\overline{z}\in\partial h(\overline{x})\cap \mathbb B(0,r)$. 
This implies that ${\rm dist}(\overline{z}_{\gamma_k},\partial h(\overline{x})\cap \mathbb B(0,r))\leq \|\overline{z}_{\gamma_k}-\overline{z}\|\to 0$ as $k\to\infty$, which is a contradiction to ${\rm dist}(\overline{z}_{\gamma_k},\partial h(\overline{x})\cap\mathbb B(0,r))>\epsilon_0$. Thus, the claimed \eqref{dist-zbargamma} holds. 

Now fix any $\gamma\in (0,\widehat{\gamma})$. Pick any $\overline{x}_{\gamma}\in\mathcal{P}_{\gamma}h(\overline{x})$ and let $\overline{z}_{\gamma}:=(\overline{x}-\overline{x}_{\gamma})/\gamma$. Then, $\overline{z}_{\gamma}\in\partial h(\overline{x}_{\gamma})$. Invoking \eqref{equa1-uniform1} with $x=\overline{x}_{\gamma}$ and $v=\overline{z}_{\gamma}$ and using $\gamma\in(0,\rho/2)$ leads to  
\[
  h(x')>h(\overline{x}_{\gamma})+\langle\overline{z}_{\gamma},x'-\overline{x}_{\gamma}\rangle-(2\gamma)^{-1}\|x'-\overline{x}_{\gamma}\|^2\quad\mbox{for~all}~x'\ne\overline{x}_{\gamma}
\]
which, by $\overline{x}_{\gamma}=\overline{x}-\gamma\overline{z}_{\gamma}$ and a suitable rearrangement, can equivalently be written as
\[
 h(x')+(2\gamma)^{-1}\|x'-\overline{x}\|^2>h(\overline{x}_\gamma)+(2\gamma)^{-1}\|\overline{x}_\gamma-\overline{x}\|^2\quad\forall x'\not=\overline{x}_\gamma.
\]
This implies that $\mathcal{P}_\gamma h(\overline{x})=\{\overline{x}_\gamma\}$. Thus, for any $\gamma \in (0,\widehat{\gamma})$, $\mathcal{P}_{\gamma} h$ is single-valued at $\overline{x}$.  

\smallskip

\noindent 
 {\bf Step~2}: For each $\gamma\in(0,\widehat{\gamma})$, there exists $\delta_{\gamma}>0$ such that $\mathcal{P}_{\gamma} h$ is single-valued, Lipschitz continuous and monotone on $\mathbb{B}(\overline{x},\delta_{\gamma})$. 
 Fix any $\gamma \in (0,\widehat{\gamma})$. From \cite[Example 5.23 (b)]{RW09}, there exists $\delta\in(0,\varepsilon)$ such that the set $\mathcal{P}_{\gamma}h(\mathbb{B}(\overline{x},\delta))$ is bounded and the mapping $\mathcal{P}_{\gamma}h$ is outer semicontinuous. 
 For any $x\in\mathbb{B}(\overline{x},\delta)$, choose $x_{\gamma}\in\mathcal{P}_{\gamma}h(x)$. Together with $\mathcal{P}_\gamma h(\overline{x})=\{\overline{x}_\gamma\}$, we have $\lim_{x\to\overline{x}}x_{\gamma}=\overline{x}_{\gamma}$.
 In addition, we have $z_{\gamma,x}:=\frac{x-x_\gamma}{\gamma}\in\partial h(x_{\gamma})$. Recalling that $x_{\gamma}\to\overline{x}_{\gamma}$ as $x\to\overline{x}$, we have  $z_{\gamma,x}\to \frac{\overline{x}-\overline{x}_{\gamma}}{\gamma}=\overline{z}_{\gamma}$ as $x\to\overline{x}$. Consequently, there exists $\delta_{\gamma} \in (0,\min\{\delta,\sqrt{2\gamma\varepsilon}\})$ such that for all $x\in\mathbb{B}(\overline{x},\delta_{\gamma})$,  
\begin{align}\label{temp-hineq1}
 &\|x_\gamma-\overline{x}_\gamma\|<\varepsilon/2,\ \ \|z_{\gamma,x}-\overline{z}_\gamma\|<\varepsilon/2,\qquad\qquad\\
 \label{temp-hineq2}
 h(x_\gamma) \leq h(x_\gamma)&+(2\gamma)^{-1}\|x_\gamma-x\|^2\leq h(\overline{x})+(2\gamma)^{-1}\|\overline{x}-x\|^2<h(\overline{x})+\varepsilon.
\end{align}
The first inequality of \eqref{temp-hineq1} along with \eqref{dist-zbargamma} implies that
$\|x_\gamma-\overline{x}\| \leq \|x_\gamma-\overline{x}_{\gamma}\| + \|\overline{x}_{\gamma}-\overline{x}\| < \varepsilon$; while the second inequality of \eqref{temp-hineq1}, along with ${\rm dist}(\overline{z}_\gamma,\partial h(\overline{x})\cap \mathbb B(0,r))\le{\varepsilon}/{2}$, implies that ${\rm dist}(z_{\gamma,x},\partial h(\overline{x})\cap\mathbb B(0,r))<\varepsilon$. Now using \eqref{temp-hineq2} and $\gamma\in(0,\rho)$ and invoking the above \eqref{equa1-uniform1} with $ x= x_{\gamma}$ and $v = z_{\gamma, x}$ yields that for all $x'\ne x_{\gamma}$,
\[
 h(x')+(2\gamma)^{-1}\|x'-x\|^2>h(x_\gamma)+(2\gamma)^{-1}\|x_\gamma-x\|^2,
\]
which implies $\mathcal{P}_\gamma h(x)=\{x_\gamma\}$. Therefore, $\mathcal{P}_{\gamma}h$ is single-valued on $\mathbb{B}(\overline{x},\delta_{\gamma})$. 

Next we show the Lipschitz continuity and monotonicity of $\mathcal{P}_{\gamma}h$ on $\mathbb{B}(\overline{x},\delta_{\gamma})$.
 Fix any $\gamma\in(0,\widehat{\gamma})$. From the above \eqref{dist-zbargamma}, there exists $\overline{z}_{\gamma}^*\in\partial h(\overline{x})\cap\mathbb B(0,r)$ such that $\|\overline{z}_{\gamma}-\overline{z}_{\gamma}^*\|\le{\varepsilon}/{2}$. Define the multifunction $\Gamma_{\gamma}:\mathbb{R}^n\rightrightarrows\mathbb{R}^n$ by 
\[ 
  \Gamma_{\gamma}(x):=\left\{\begin{array}{cl}
  \!\big\{z\in \partial h(x)\ |\ \|z-\overline{z}_{\gamma}^*\|\le\varepsilon\big\}& {\rm if}\ 
   x\in\mathbb{B}(\overline{x},\varepsilon)\ {\rm with}\ h(x)<h(\overline{x})+\varepsilon,\\
   \emptyset &{\rm otherwise}.
   \end{array}\right.
\]
We claim that $\Gamma_{\gamma}+\rho^{-1}I$ is monotone on ${\rm dom}\ \Gamma_{\gamma}$. Indeed, fix any $x^1,x^2\in\mbox{dom}\  \Gamma_\gamma$. Pick any $z^i\in \Gamma_{\gamma}(x^i)$ for $i=1,2$. Obviously, $\|z^i-\overline{z}_{\gamma}^*\| \leq \varepsilon$ and $z^{i}\in\partial h(x^i)$ for $i=1,2$. Invoking the above \eqref{equa1-uniform1} with $(x',x)=(x^1,x^2)$ and $(x',x)=(x^2,x^1)$ leads to   
\begin{align*}
 &h(x^1)\geq h(x^2)+\langle z^2,x^1-x^2\rangle-\frac{1}{2\rho}\|x^1-x^2\|^2\\
 &h(x^2)\geq h(x^1)+\langle z^1,x^2-x^1\rangle-\frac{1}{2\rho}\|x^2-x^1\|^2.
\end{align*}
 Adding the above two inequalities leads to $\langle z^1+x^1/\rho-(z^2+x^2/\rho),x^1-x^2\rangle\geq0$, so $\Gamma_{\gamma} + \rho^{-1}I$ is monotone on dom $\Gamma_{\gamma}$.  Let $\delta\!:= 1/\gamma -1/\rho$. Then $\Gamma_{\gamma}+\gamma^{-1}I = \Gamma_{\gamma}+\rho^{-1}I+\delta I$ and
 $$ (I+\gamma \Gamma_{\gamma})^{-1} = (\gamma\delta [I+\delta^{-1}(\Gamma_{\gamma} + \rho^{-1}I)])^{-1} = (I+M)^{-1}\circ (\gamma\delta)^{-1}I $$
 with $M=\delta^{-1}(\Gamma_{\gamma}+\rho^{-1}I)$. As $M$ is monotone, we have that $(I+M)^{-1}$ is monotone and nonexpansive by \cite[Theorem 12.12]{RW09}, and that the above equation implies that  $(I+\gamma \Gamma_{\gamma})^{-1}$ is monotone and Lipschitz continuous with constant $(\gamma \delta)^{-1}$ on $\mathbb{B}(\overline{x},\delta_{\gamma})$. 
 Fix any $x\in\mathbb{B}(\overline{x},\delta_{\gamma})$. According to the proof of Step 2, there exists $x_{\gamma}$ such that 
$\mathcal{P}_\gamma h(x)=\{x_\gamma\}$ and $\|x_\gamma-\overline{x}_\gamma\|<\varepsilon/2$, which by the definition of $\mathcal{P}_\gamma h(x)$ implies that 
\[
  0\in\partial h(x_{\gamma})+(x_\gamma-x)/\gamma.
\]
 Recall that $\|x_{\gamma}-\overline{x}\|\le\|x_{\gamma}-\overline{x}_{\gamma}\|+\|\overline{x}_{\gamma}-\overline{x}\|\le\varepsilon$ with $h(x_{\gamma})<h(\overline{x})+\varepsilon$ and $z_{\gamma,x}=(x-x_\gamma)/\gamma$ satisfies $\|z_{\gamma,x}-\overline{z}_{\gamma}^*\|\le\|z_{\gamma,x}-\overline{z}_{\gamma}\|+\|\overline{z}_{\gamma}-\overline{z}_{\gamma}^*\|\le\varepsilon$. We can replace $\partial h(x_{\gamma})$ of the above inclusion with $\Gamma_{\gamma}(x_{\gamma})$, and obtain that $\mathcal{P}_{\gamma}h(x)\subset (I\!+\!\gamma \Gamma_{\gamma})^{-1}(x)$. Recall that $\mathcal{P}_{\gamma}h(x)$ is a singleton and $(I\!+\!\gamma \Gamma_{\gamma})^{-1}(x)$ contains one element at most. Hence, $\mathcal{P}_{\gamma}h(x)=(I\!+\!\gamma \Gamma_{\gamma})^{-1}(x)$. 
 Thus, due to the corresponding properties of $(I\!+\!\gamma \Gamma_{\gamma})^{-1}(x)$, $\mathcal{P}_{\gamma}h$ is Lipschitz continuous and monotone on $\mathbb{B}(\overline{x},\delta_{\gamma})$. 
\end{proof}
 \subsection{Forward-backward envelope of $F$}\label{sec2.3}

 The forward-backward  envelope (FBE) of a general composite function of the form \eqref{model} was initially introduced in \cite{Patrinos13}. 
 The FBE of function $F$ associated with a parameter $\gamma>0$ is defined as 
 \begin{equation}\label{def-fbe}
     \begin{aligned}
         F_{\gamma}(x)&:=\inf_{z\in\mathbb{R}^n}\Big\{\ell_\gamma(z;x):=f(x)+\langle \nabla\!f(x), z-x\rangle + \frac{1}{2\gamma}\|z-x\|^2+ g(z)\Big\}\\
  &=e_{\gamma}g(x-\gamma\nabla\!f(x))+f(x)-\frac{\gamma}{2}\|\nabla\!f(x)\|^2\quad\ \forall x\in\mathcal{O}.
     \end{aligned}
 \end{equation}
 Obviously, $\mathcal{T}_{\gamma}$ defined in \eqref{eq-Rx} is the solution mapping of this minimization problem. According to Assumption \ref{ass-g} (i) and \cite[Proposition 4.3]{Themelis18}, we have the following results.
 \begin{lemma}\label{lemma-PGdescent}
 Assume that $g$ is prox-bounded with threshold $\gamma_g>0.$ For every $\gamma\in(0,\gamma_g)$ and $x\in {\rm dom}g$, it holds that
 \begin{itemize}
 \item[{\rm (i)}] $F_{\gamma}(x)\leq F(x)$; 
 
 \item[{\rm (ii)}] $F(z) \!\leq\! F_{\gamma}(x) \!-\!\frac{1-\gamma L}{2\gamma}\|z \!-\! x\|^2$ for all $z\in \mathcal{T}_{\gamma}(x)$.
 \end{itemize}
 \end{lemma}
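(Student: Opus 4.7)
The plan is to prove both claims directly from the definition of $F_\gamma$ in \eqref{def-fbe}, using only Assumption \ref{ass-g}(i) and the prox-boundedness of $g$ with threshold $\gamma_g$.

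For part (i), the idea is that $F_\gamma(x)=\inf_{z\in\mathbb{R}^n}\ell_\gamma(z;x)$ is an infimum, and evaluating $\ell_\gamma(\cdot;x)$ at the trivial choice $z=x$ produces $\ell_\gamma(x;x)=f(x)+g(x)=F(x)$. Taking the infimum immediately gives $F_\gamma(x)\le F(x)$, with no further work required.

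For part (ii), I would first note that $\gamma\in(0,\gamma_g)$ together with prox-boundedness of $g$ ensures that the infimum in \eqref{def-fbe} is finite and attained, so $\mathcal{T}_\gamma(x)\ne\emptyset$ and $F_\gamma(x)>-\infty$. Any $z\in\mathcal{T}_\gamma(x)$ then satisfies
\[
F_\gamma(x)=f(x)+\langle\nabla\!f(x),z-x\rangle+\tfrac{1}{2\gamma}\|z-x\|^2+g(z).
\]
Since $\nabla\!f$ is $L$-Lipschitz on $\mathcal{O}\supset{\rm dom}\,g$ and both $x$ and $z$ lie in ${\rm dom}\,g$, the standard descent lemma gives
\[
f(z)\le f(x)+\langle\nabla\!f(x),z-x\rangle+\tfrac{L}{2}\|z-x\|^2.
\]
Rearranging this as $f(x)+\langle\nabla\!f(x),z-x\rangle\ge f(z)-\tfrac{L}{2}\|z-x\|^2$ and substituting into the identity above yields $F_\gamma(x)\ge F(z)+\tfrac{1-\gamma L}{2\gamma}\|z-x\|^2$, which is exactly the claim.

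There is no real obstacle here; the only subtlety is making sure the segment $[x,z]$ lies in $\mathcal{O}$ so that the descent lemma is applicable, which is automatic once $\mathcal{O}$ is taken to be an open convex set containing ${\rm dom}\,g$, as is the natural reading of Assumption \ref{ass-g}(i). The lemma is essentially the content of \cite[Proposition 4.3]{Themelis18} specialized to the present setting, so I would simply record the short calculation above.
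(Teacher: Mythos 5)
Your proof is correct and is exactly the standard argument behind \cite[Proposition 4.3]{Themelis18}, which the paper simply cites without reproving: part (i) by evaluating $\ell_\gamma(\cdot;x)$ at $z=x$, and part (ii) by combining the attained infimum at $z\in\mathcal{T}_\gamma(x)$ with the descent lemma. Your remark about needing the segment $[x,z]$ to lie where $\nabla f$ is $L$-Lipschitz is a fair observation of a point the paper glosses over, and your resolution (reading $\mathcal{O}$ as convex, or $\nabla f$ as globally Lipschitz) is the intended one.
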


 If $g$ is proper, lsc, and prox-bounded with threshold $\gamma_g>0$, we know from  \cite[Theorem 1.25]{RW09} that for every $\gamma\in(0,\gamma_{g})$, the multifunction $\mathcal{P}_{\gamma} g\!:\mathbb{R}^n \rightrightarrows \mathbb{R}^n$ is nonempty and compact-valued, and $e_{\gamma}g:\mathbb{R}^n\rightarrow \mathbb{R}$ is finite-valued and continuous. Along with \eqref{def-fbe} and Assumption \ref{ass-g} (i), for every $\gamma\in(0,\gamma_{g})$, $F_{\gamma}$ is a finite-valued and continuous function on $\mathcal{O}$. Next we make use of the prox-regularity of $g$ and apply Proposition \ref{prop-proxmap} to achieve the continuous differentiability of $F_{\gamma}$ on $\mathcal{O}$.
\begin{proposition}\label{Forder-Fgam}
 Consider any $\overline{x}\in{\rm dom}g$. Assume that $g$ is prox-regular at $\overline{x}$ and prox-bounded with threshold $\gamma_g \geq 1/L$. For all $\gamma>0$ small enough, $\mathcal{T}_{\gamma}$ is locally single-valued and Lipschitz continuous around $\overline{x}$. Consequently, $F_{\gamma}$ is continuously differentiable with $\nabla\!F_{\gamma}(x) =Q_{\gamma}(x)\mathcal{R}_{\gamma}(x)$ for $x$ around $\overline{x}$, where, for any given $\gamma>0$,  $Q_{\gamma}$ is a function defined on $\mathcal{O}$ with $Q_{\gamma}(x):=I-\gamma\nabla^2\!f(x)$. 
\end{proposition}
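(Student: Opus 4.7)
The plan is to combine Proposition \ref{prop-proxmap} with the smooth forward map $\phi_\gamma(x):=x-\gamma\nabla f(x)$ to transfer single-valuedness and Lipschitz continuity of $\mathcal{P}_\gamma g$ near $\overline{x}$ over to $\mathcal{T}_\gamma$ near $\overline{x}$, and then to differentiate the Moreau-envelope representation in \eqref{def-fbe} by the chain rule.

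First, Proposition \ref{prop-proxmap} applied to $g$ at $\overline{x}$ produces a threshold $\widehat\gamma\in(0,\gamma_g)$ such that, for every $\gamma\in(0,\widehat\gamma)$, there exists $\delta_\gamma>0$ on which $\mathcal{P}_\gamma g$ is single-valued, Lipschitz continuous and monotone on $\mathbb{B}(\overline{x},\delta_\gamma)$. Assumption \ref{ass-g}(i) guarantees $\phi_\gamma\in C^1(\mathcal{O})$ with $\nabla\phi_\gamma(x)=Q_\gamma(x)=I-\gamma\nabla^2 f(x)$ and $\phi_\gamma(\overline{x})=\overline{x}-\gamma\nabla f(\overline{x})$. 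For $\gamma$ small enough, $\|\phi_\gamma(\overline{x})-\overline{x}\|=\gamma\|\nabla f(\overline{x})\|<\delta_\gamma/2$, so the continuity of $\phi_\gamma$ supplies some $\eta_\gamma>0$ for which $\phi_\gamma$ maps $\mathbb{B}(\overline{x},\eta_\gamma)$ into $\mathbb{B}(\overline{x},\delta_\gamma)$. Consequently $\mathcal{T}_\gamma=\mathcal{P}_\gamma g\circ\phi_\gamma$ is single-valued and locally Lipschitz continuous on $\mathbb{B}(\overline{x},\eta_\gamma)$, as required by the first half of the statement.

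Second, I would invoke the standard envelope identity on the single-valuedness region: $e_\gamma g$ is continuously differentiable on $\mathbb{B}(\overline{x},\delta_\gamma)$ with $\nabla e_\gamma g(u)=\gamma^{-1}(u-\mathcal{P}_\gamma g(u))$. This can be derived by combining \cite[Proposition 13.37]{RW09} with Proposition \ref{prop-proxmap}, or directly by differentiating the optimal value function via the optimality relation $\gamma^{-1}(u-\mathcal{P}_\gamma g(u))\in\partial g(\mathcal{P}_\gamma g(u))$. Applying the chain rule to $F_\gamma(x)=e_\gamma g(\phi_\gamma(x))+f(x)-(\gamma/2)\|\nabla f(x)\|^2$ and using the symmetry of $Q_\gamma(x)=\nabla\phi_\gamma(x)$ together with $\gamma^{-1}(\phi_\gamma(x)-\mathcal{T}_\gamma(x))=\mathcal{R}_\gamma(x)-\nabla f(x)$ yields
\[
\nabla F_\gamma(x)=Q_\gamma(x)\bigl[\mathcal{R}_\gamma(x)-\nabla f(x)\bigr]+\nabla f(x)-\gamma\nabla^2 f(x)\nabla f(x).
\]
Expanding $Q_\gamma(x)\nabla f(x)=\nabla f(x)-\gamma\nabla^2 f(x)\nabla f(x)$ produces the cancellation that leaves $\nabla F_\gamma(x)=Q_\gamma(x)\mathcal{R}_\gamma(x)$, and the continuity of $\nabla F_\gamma$ is inherited from that of $Q_\gamma$ and of $\mathcal{R}_\gamma=\gamma^{-1}(I-\mathcal{T}_\gamma)$.

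The main technical delicacy is the quantitative matching between the $\gamma$-dependent radius $\delta_\gamma$ from Proposition \ref{prop-proxmap} and the $\gamma$-scaled perturbation $\gamma\|\nabla f(\overline{x})\|$: I need the latter to be controllably smaller than the former for all sufficiently small $\gamma$. A careful inspection of how $\delta_\gamma$ is chosen inside the proof of Proposition \ref{prop-proxmap} (beyond the explicit upper bound $\delta_\gamma<\sqrt{2\gamma\varepsilon}$) is therefore required; if this matching proves subtle, an alternative is to establish single-valuedness of $\mathcal{T}_\gamma$ at $\overline{x}$ directly from the prox-regularity inequality applied at the image point $\phi_\gamma(\overline{x})$, arguing from the first-order optimality condition characterizing $\mathcal{T}_\gamma(\overline{x})$ rather than relying on the size of the single-valuedness ball. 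A secondary point is explicit justification of the envelope gradient formula on the single-valuedness region, which is standard variational-analysis material but should be recorded.
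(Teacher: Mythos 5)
Your second half (the chain-rule computation giving $\nabla F_{\gamma}(x)=Q_{\gamma}(x)\mathcal{R}_{\gamma}(x)$ via strict differentiability of $e_{\gamma}g$ on the single-valuedness region, citing \cite[Example 10.32]{RW09}) is fine and matches the paper. The genuine gap is exactly the ``technical delicacy'' you flag in the first half, and it is not closable by inspecting the proof of Proposition \ref{prop-proxmap}: applying that proposition directly to $g$ gives single-valuedness of $\mathcal{P}_{\gamma}g$ only on $\mathbb{B}(\overline{x},\delta_{\gamma})$, whereas you need it near the forward point $\overline{x}-\gamma\nabla\!f(\overline{x})$, which sits at distance $\gamma\|\nabla\!f(\overline{x})\|$ from the center. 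The radius $\delta_{\gamma}$ in Step~2 of that proof is chosen to control $z_{\gamma,x}=(x-x_{\gamma})/\gamma$, which carries a $1/\gamma$ factor, so $\delta_{\gamma}$ is effectively forced to be of order $\gamma$ times a constant that has nothing to do with $\|\nabla\!f(\overline{x})\|$; there is no guarantee that $\gamma\|\nabla\!f(\overline{x})\|<\delta_{\gamma}$ for small $\gamma$. So the containment argument as you set it up can fail, and your fallback (``argue directly at the image point'') is left unexecuted.

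The paper closes this with a tilt trick that your proposal is missing: set $h(x):=g(x)+\langle\nabla\!f(\overline{x}),x\rangle$. By \cite[Exercise 13.35]{RW09}, $h$ is still prox-regular at $\overline{x}$, so Proposition \ref{prop-proxmap} applies to $h$ and yields single-valuedness and Lipschitz continuity of $\mathcal{P}_{\gamma}h$ on $\mathbb{B}(\overline{x},\delta_{\gamma})$. Completing the square gives $\mathcal{P}_{\gamma}h(\cdot)=\mathcal{P}_{\gamma}g(\cdot-\gamma\nabla\!f(\overline{x}))$, so this ball translates into a single-valuedness ball for $\mathcal{P}_{\gamma}g$ centered exactly at $\overline{x}-\gamma\nabla\!f(\overline{x})$, with no quantitative matching needed. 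The remaining step is then the easy one you already have: for $x\in\mathbb{B}(\overline{x},\delta_{\gamma}/(1+\gamma L))$, Lipschitz continuity of $\nabla\!f$ puts $x-\gamma\nabla\!f(x)$ inside that translated ball, whence $\mathcal{T}_{\gamma}$ is single-valued and Lipschitz there. I recommend replacing your containment argument with this tilt.
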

\begin{proof}
 Let $h(x)\!:= g(x)+\langle\nabla\!f(\overline{x}),x\rangle$ for $x\in\mathbb{R}^n$. As $g$ is prox-regular at $\overline{x}$, it follows by \cite[Exercise 13.35]{RW09} that function $h$ is prox-regular at $\overline{x}$. By Proposition \ref{prop-proxmap}, there exists $\widehat{\gamma}>0$ such that for each $\gamma\in(0,\widehat{\gamma})$, $\mathcal{P}_\gamma h$ is single-valued and Lipschitz continuous on $\mathbb{B}(\overline{x},\delta_{\gamma})\subset \mathcal{O}$ for some $\delta_{\gamma}>0$.
 
 Fix any $\gamma\in(0,\widehat{\gamma})$. Note that $\mathcal{P}_\gamma g(\cdot - \gamma \nabla\! f(\overline{x}))=\mathcal{P}_\gamma h(\cdot)$. Hence, $\mathcal{P}_\gamma g$ is single-valued and Lipschitz continuous on $\mathbb{B}(\overline{x}-\!\gamma\nabla\!f(\overline{x}),\delta_{\gamma})$.
 Next we prove that $\mathcal{T}_{\gamma}$ is single-valued and Lipschitz continuous on $\mathbb{B}(\overline{x},\varepsilon_{\gamma})$ with $\varepsilon_{\gamma}:=\frac{\delta_{\gamma}}{1+\gamma L}$. Indeed, for any $x\in\mathbb{B}(\overline{x},\varepsilon_{\gamma})$, we have $x-\gamma\nabla\!f(x) \in \mathbb{B}(\overline{x}-\gamma\nabla f(\overline{x}),\delta_{\gamma})$ by Assumption \ref{ass-g} (i), which implies that $\mathcal{T}_{\gamma}$ is single-valued and Lipschitz continuous on $\mathbb{B}(\overline{x},\varepsilon_{\gamma})$. 
 
 Recall that $\mathcal{P}_{\gamma}g$ is single-valued on $\mathbb{B}(\overline{x}-\gamma\nabla\!f(\overline{x}),\delta_{\gamma})$.
 From \cite[Example 10.32]{RW09}, we conclude that $e_{\gamma}g$ is strictly differentiable on $\mathbb{B}(\overline{x}-\gamma\nabla\!f(\overline{x}),\delta_{\gamma})$. By \eqref{def-fbe}, $F_{\gamma}$ is continuously differentiable on $\mathbb{B}(\overline{x},\varepsilon_{\gamma})$ and at all $x\in\mathbb{B}(\overline{x},\varepsilon_{\gamma})$, 
 \[
  \nabla F_{\gamma}(x)=\gamma^{-1}(I-\gamma\nabla^2\!f(x))(x-\mathcal{P}_{\gamma}g(x-\!\gamma \nabla f(x))=(I-\gamma\nabla^2\!f(x))\mathcal{R}_{\gamma}(x).
 \]
 Recall that $\mathcal{P}_{\gamma}g$ is also Lipschitz continuous on $\mathbb{B}(\overline{x}-\gamma\nabla\!f(\overline{x}),\delta_{\gamma})$. The above equation along with Assumption \ref{ass-g} (i) implies the local Lipschitz continuity of $\nabla F_{\gamma}$ on $\mathbb{B}(\overline{x},\varepsilon_{\gamma})$. 
 \end{proof}
 
 For given $\overline{x}\in{\rm dom}g$ and $\gamma>0$, if $\mathcal{R}_{\gamma}$ defined by \eqref{eq-Rx} is single-valued and Lipschitz continuous around $\overline{x}$, by Proposition \ref{Forder-Fgam} we have that the Clarke Jacobian of $\nabla F_{\gamma}$ at $\overline{x}$ involves that of $\mathcal{R}_{\gamma}$ around $\overline{x}$. The following lemma provides a characterization for the Clarke Jacobian of $\mathcal{R}_{\gamma}$. 
 \begin{proposition}\label{prop-CJac} 
  Fix $\overline{x}\in {\rm dom}g$ and $\gamma<1/L$. Assume that $g$ is prox-regular and prox-bounded with threshold $\gamma_g \geq 1/L$. If $\mathcal{P}_{\gamma}g$ is locally Lipschitz continuous around $\overline{x}\!-\!\gamma \nabla\!f(\overline{x})$, there exists $\delta>0$ such that for all $x\in \mathbb{B}(\overline{x},\delta)$, $\partial_C \mathcal{P}_{\gamma}g(x-\gamma\nabla\!f(x))$ is a set of symmetric matrices and  $\partial_C\mathcal{R}_{\gamma}(x)$ is a nonempty compact set with 
  \[
   \partial_C \mathcal{R}_{\gamma}(x) = \big\{\gamma^{-1}(I - WQ_{\gamma}(x)) \ | \ W\in \partial_C \mathcal{P}_{\gamma}g(x-\gamma\nabla\!f(x)) \big\},
  \]
  and furthermore, there exists $c_0>0$ such that $\|H\|_2\leq c_0$ for all $H\in \partial_C \mathcal{R}_{\gamma}(x)$.
 \end{proposition}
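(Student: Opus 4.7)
The plan is to reduce the entire proposition to a Rademacher argument for the symmetry claim plus a two-sided chain-rule argument for the formula, then read off the uniform bound. Set $\overline{y}:=\overline{x}-\gamma\nabla\!f(\overline{x})$ and pick $\delta_0,L_0>0$ such that $\mathcal{P}_{\gamma}g$ is single-valued and $L_0$-Lipschitz on $\mathbb{B}(\overline{y},\delta_0)$, which is available by hypothesis. Put $\phi(x):=x-\gamma\nabla\!f(x)$, so that $\phi$ is $C^1$ on $\mathcal{O}$ with Jacobian $\phi'(x)=Q_{\gamma}(x)$. Since $\gamma<1/L$ and $\|\nabla^2\!f(x)\|_2\le L$, the matrix $Q_{\gamma}(x)$ is symmetric positive definite and hence invertible with $\|Q_{\gamma}(x)\|_2\le 1+\gamma L$. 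Choose $\delta>0$ so small that $\phi(\mathbb{B}(\overline{x},\delta))\subset\mathbb{B}(\overline{y},\delta_0)$, possible by continuity of $\phi$, and so that $\phi$ restricts to a $C^1$ diffeomorphism onto its image on $\mathbb{B}(\overline{x},\delta)$ by the inverse function theorem.

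For the symmetry claim I would use the identity $\nabla e_{\gamma}g(y) = \gamma^{-1}(y-\mathcal{P}_{\gamma}g(y))$, which is valid throughout $\mathbb{B}(\overline{y},\delta_0)$ since $\mathcal{P}_{\gamma}g$ is single-valued there. By Rademacher's theorem, $\mathcal{P}_{\gamma}g$ is Fr\'echet differentiable on a set of full measure in $\mathbb{B}(\overline{y},\delta_0)$; at every such point $y$ the identity yields that $e_{\gamma}g$ is twice differentiable at $y$ and
\[
 (\mathcal{P}_{\gamma}g)'(y) = I - \gamma\nabla^2\! e_{\gamma}g(y),
\]
which is symmetric because the Hessian is. Symmetry is preserved under passage to limits and under convex combinations, so every element of $\partial_B\mathcal{P}_{\gamma}g(y)$, and hence of $\partial_C\mathcal{P}_{\gamma}g(y)={\rm conv}\,\partial_B\mathcal{P}_{\gamma}g(y)$, is symmetric.

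For the formula I would rewrite $\mathcal{R}_{\gamma}(x)=\gamma^{-1}x-\gamma^{-1}(\mathcal{P}_{\gamma}g\circ\phi)(x)$ and apply Clarke's chain rule for a Lipschitz outer map composed with a $C^1$ inner map, getting
\[
 \partial_C(\mathcal{P}_{\gamma}g\circ\phi)(x)\subseteq \partial_C\mathcal{P}_{\gamma}g(\phi(x))\cdot Q_{\gamma}(x).
\]
The principal obstacle is the reverse inclusion, since Clarke's chain rule for a vector-valued outer map is only a one-sided containment in general. I would close the gap using the local diffeomorphism: applying the same chain rule to $\mathcal{P}_{\gamma}g=(\mathcal{P}_{\gamma}g\circ\phi)\circ\phi^{-1}$ at $\phi(x)$ with $(\phi^{-1})'(\phi(x))=Q_{\gamma}(x)^{-1}$ yields $\partial_C\mathcal{P}_{\gamma}g(\phi(x))\cdot Q_{\gamma}(x)\subseteq \partial_C(\mathcal{P}_{\gamma}g\circ\phi)(x)$, and hence equality. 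Subtracting from $\gamma^{-1}I$ and rescaling delivers the displayed formula for $\partial_C\mathcal{R}_{\gamma}(x)$; nonemptiness and compactness follow from the local Lipschitz continuity of $\mathcal{R}_{\gamma}$ that this formula implies. The uniform bound is then immediate: every $W\in\partial_C\mathcal{P}_{\gamma}g(\phi(x))$ satisfies $\|W\|_2\le L_0$ by Lipschitz continuity, so $\|H\|_2\le \gamma^{-1}(1+L_0(1+\gamma L))=:c_0$ for every $H\in\partial_C\mathcal{R}_{\gamma}(x)$ and every $x\in\mathbb{B}(\overline{x},\delta)$.
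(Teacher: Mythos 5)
Your proposal is correct and follows essentially the same route as the paper: symmetry via the identity $\mathcal{P}_{\gamma}g = I - \gamma\nabla e_{\gamma}g$ (so that $\partial_C\mathcal{P}_{\gamma}g$ consists of generalized Hessians of a $\mathcal{C}^{1,1}$ function), the product/chain-rule equality $\partial_C\mathcal{T}_{\gamma}(x)=\partial_C\mathcal{P}_{\gamma}g(x-\gamma\nabla\!f(x))Q_{\gamma}(x)$ exploiting the nonsingularity of $Q_{\gamma}(x)$, and the uniform bound from the local Lipschitz modulus. The only difference is one of detail: where the paper cites \cite[Lemma 1]{ChanSun08} for the two-sided chain rule, you reprove it via the local diffeomorphism $\phi$ and its inverse, which is the standard argument behind that lemma.
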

 \begin{proof}
 As $\mathcal{P}_{\!\gamma}g$ is locally Lipschitz continuous around $\overline{x}-\!\gamma\nabla\!f(\overline{x})$ and $\nabla\! f$ is Lipschitz continuous on ${\rm dom}g$, there exists $\delta>0$ such that $\mathcal{R}_{\gamma}$ is Lipschitz continuous on $\mathbb{B}(\overline{x},\delta)$. By \cite[Theorem 9.62]{RW09}, by shrinking $\delta$ if necessary, for each $x\in\mathbb{B}(\overline{x},\delta)$, $\partial_C \mathcal{R}_{\gamma}(x)$ is nonempty and compact. Fix any $x\in\mathbb{B}(\overline{x},\delta)$. Recall that $\mathcal{R}_{\gamma}(z)=\gamma^{-1}(z-\mathcal{T}_{\gamma}(z))$ for any $z\in\mathbb{R}^n$. By the corollary of \cite[Theorem 2.6.6]{clarke90}, it holds that
 \begin{equation*}
   \partial_C \mathcal{R}_{\gamma}(x)= \gamma^{-1}(I - \partial_C \mathcal{T}_{\gamma}(x)).
 \end{equation*}
 Recall that $\mathcal{T}_{\gamma}(z) = \mathcal{P}_{\gamma}g(z-\gamma\nabla f(z))$ for $z\in\mathbb{B}(\overline{x},\delta)$ and $I\!-\!\gamma\nabla^2\!f(\overline{x})$ is nonsingular. From \cite[Lemma 1]{ChanSun08}, for any $x\in\mathbb{B}(\overline{x},\delta)$ (by shrinking $\delta$ if necessary), it holds that 
 \[
  \partial_C \mathcal{T}_{\gamma}(x)= \partial_C \mathcal{P}_{\gamma} g(x-\gamma\nabla f(x))Q_{\gamma}(x).
 \]
 The above two equations give the expression of $\partial_C \mathcal{R}_{\gamma}(x)$. Let $z = x-\gamma \nabla f(x)$. Since $\mathcal{P}_{\gamma} g$ is locally Lipschitz continuous at $x$, by \cite[Example 10.32]{RW09},  $\mathcal{P}_{\gamma}g(z) = z - \gamma\nabla e_{\gamma}g(z)$. Hence, $\partial_C \mathcal{P}_{\gamma}g(z)$ is the convex hull of $\partial_B[\nabla (\frac{1}{2}\|\cdot\|^2 - \gamma e_{\gamma}g(\cdot))](z)$, which implies that every element of $\partial_C \mathcal{P}_{\gamma}g(z)$ is a symmetric matrix.  
 The last part follows the Lipschitz continuity of $\mathcal{R}_{\gamma}$ on $\mathbb{B}(\overline{x},\delta)$ and  \cite[Proposition 2.6.2 (d)]{clarke90}.  
\end{proof}
 \subsection{KL-property and metric subregularity}\label{sec2.4}

 We first recall the Kurdyka-{\L}ojasiewicz (KL) property of an extended real-valued function, which plays a crucial role in the convergence analysis of first-order methods \cite{Attouch09,Attouch10,Attouch13,Bolte14} and second-order methods  \cite{stella17,Themelis18,themelis21,wu23,wu23b,ouyang2024trust} for nonconvex and nonsmooth optimization problems.
 \begin{definition}\label{KL-Def}
  For every $\eta>0$, denote by $\Upsilon_{\!\eta}$ the set consisting of all continuous concave $\varphi\!:[0,\eta)\to\mathbb{R}_{+}$ that are continuously differentiable on $(0,\eta)$ with $\varphi(0)=0$ and $\varphi'(s)>0$ for all $s\in(0,\eta)$. A proper function $h\!:\mathbb{R}^n\!\to\overline{\mathbb{R}}$ is said to have the KL property at $\overline{x}\in{\rm dom}\,\partial h$ if there exist $\eta\in(0,\infty]$, a neighborhood $\mathcal{U}$ of $\overline{x}$ and a function $\varphi\in\Upsilon_{\!\eta}$ such that for all $x\in\mathcal{U}\cap\big[h(\overline{x})<h<h(\overline{x})+\eta\big]$, 
  \[
   \varphi'(h(x)-h(\overline{x}))\,{\rm dist}(0,\partial h(x))\ge 1.
  \]
  If the function $\varphi$ in the above inequality is chosen as $cs^{1-\theta}$ for some $c>0$ and $\theta\in[0,1)$, then $h$ is said to satisfy the KL property at $\overline{x}$ with exponent $\theta$. 
\end{definition}

 It was shown in \cite[Section 2.2]{liu2024inexact} that the KL property of $h$ with exponent has a close relation with the metrical subregularity of its subdifferential mapping. 
 \begin{definition}\label{Def2.2}
 Let $\mathcal{F}\!:\mathbb{R}^n\rightrightarrows\mathbb{R}^n$ be a multifunction and $(\overline{x},\overline{y})\in{\rm gph}\,\mathcal{F}$, we say that $\mathcal{F}$ is metrically subregular at $\overline{x}$ for $\overline{y}$ if there exist $\kappa>0$ and $\delta>0$ such that for all $x\in\mathbb{B}(\overline{x},\delta)$,
$ {\rm dist}(x,\mathcal{F}^{-1}(\overline{y}))\le\kappa{\rm dist}(\overline{y},\mathcal{F}(x)). $
\end{definition}
 
We discuss the relation between the subregularity of $\partial F_{\gamma}$ and the quadratic growth of $F_{\gamma}$, which will be used in the convergence analysis.
 \begin{lemma}\label{lemma-subregularity-growth}
 Assume that $g$ is prox-bounded with threshold $\gamma_g$. Let $\alpha>0$ and $\gamma\in(0,\min\{\gamma_g,\frac{1}{L+2\alpha}\}]$. Consider any point $\overline{x}\in\mathcal{R}_{\gamma}^{-1}(0)$ around which $\mathcal{R}_{\gamma}$ is single-valued. Then the following assertions hold.
 \begin{itemize}
 \item[{\rm (i)}] $\mathcal{R}_{\gamma}$ is metrically subregular at $\overline{x}$ for $0$ if and only if $\partial F_{\gamma}$ is metrically subregular at $\overline{x}$ for $0$.

 \item[{\rm (ii)}] If $\overline{x}$ is a local minimum of $F_{\gamma}$ and $\partial F_{\gamma}$ is metrically subregular at $\overline{x}$ for $0$, then there exist $\delta>0,\kappa>0$ such that for all $x\in\mathbb{B}(\overline{x},\delta)$,
 $$ F_{\gamma}(x)-F_{\gamma}(\overline{x})\ge\kappa[{\rm dist}(x, \mathcal{R}_{\gamma}^{-1}(0))]^{2}.$$
 \end{itemize}
 \end{lemma}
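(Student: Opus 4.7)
The plan hinges on the identity $\nabla F_{\gamma}(x)=Q_{\gamma}(x)\mathcal{R}_{\gamma}(x)$ from Proposition \ref{Forder-Fgam} and standard $C^{1,1}$ tools.

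For Part (i), I first observe that $\|\nabla^{2}\!f(x)\|_{2}\le L$ on $\mathcal{O}$ together with $\gamma\le\frac{1}{L+2\alpha}$ force the symmetric matrix $Q_{\gamma}(x)=I-\gamma\nabla^{2}\!f(x)$ to satisfy $\frac{2\alpha}{L+2\alpha}I\preceq Q_{\gamma}(x)\preceq 2I$ uniformly on $\mathcal{O}$, hence to be invertible with uniformly bounded inverse. On a neighborhood $\mathcal{U}$ of $\overline{x}$ on which $\mathcal{R}_{\gamma}$ is single-valued (guaranteed by Proposition \ref{Forder-Fgam}), this makes $\|\mathcal{R}_{\gamma}(x)\|$ and $\|\nabla F_{\gamma}(x)\|$ equivalent up to multiplicative constants and gives $\mathcal{R}_{\gamma}^{-1}(0)\cap\mathcal{U}=(\partial F_{\gamma})^{-1}(0)\cap\mathcal{U}$. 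Both quantities appearing in Definition \ref{Def2.2} are thus uniformly comparable on a neighborhood of $\overline{x}$, which yields the equivalence asserted in~(i).

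For Part (ii), set $S:=\mathcal{R}_{\gamma}^{-1}(0)=(\partial F_{\gamma})^{-1}(0)$ locally. By Proposition \ref{Forder-Fgam}, $\nabla F_{\gamma}$ is $M$-Lipschitz on some $\mathbb{B}(\overline{x},\delta_{0})$; the subregularity hypothesis supplies ${\rm dist}(x,S)\le\kappa_{0}\|\nabla F_{\gamma}(x)\|$ there (after shrinking $\delta_{0}$ if needed), and local minimality gives $F_{\gamma}(x)\ge F_{\gamma}(\overline{x})$. The plan is to first derive the \L{}ojasiewicz-type inequality
\[
F_{\gamma}(x)-F_{\gamma}(\overline{x})\le C\,\|\nabla F_{\gamma}(x)\|^{2}\quad\text{on some }\mathbb{B}(\overline{x},\delta_{1})
\]
by applying the descent lemma between $x$ and its projection $\bar{x}\in S$ (noting $\nabla F_{\gamma}(\bar{x})=0$) to obtain $F_{\gamma}(x)-F_{\gamma}(\bar{x})\le\|\nabla F_{\gamma}(x)\|\,{\rm dist}(x,S)+\tfrac{M}{2}{\rm dist}(x,S)^{2}\le\bigl(\kappa_{0}+\tfrac{M\kappa_{0}^{2}}{2}\bigr)\|\nabla F_{\gamma}(x)\|^{2}$, and then substituting the subregularity bound. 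Along the gradient flow $\dot{y}(t)=-\nabla F_{\gamma}(y(t))$ with $y(0)=x$, the standard chain-rule identity $\tfrac{d}{dt}\sqrt{F_{\gamma}(y(t))-F_{\gamma}(\overline{x})}\le-\tfrac{1}{2\sqrt{C}}\|\dot{y}(t)\|$ integrates to $\sqrt{F_{\gamma}(x)-F_{\gamma}(\overline{x})}\ge\tfrac{1}{2\sqrt{C}}{\rm dist}(x,S)$, giving the claimed quadratic growth with $\kappa=1/(4C)$.

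The main obstacle is a subtle intermediate step in the derivation of the \L{}ojasiewicz inequality: passing from the bound involving $F_{\gamma}(x)-F_{\gamma}(\bar{x})$ to one involving $F_{\gamma}(x)-F_{\gamma}(\overline{x})$ requires that every critical point $\bar{x}\in S$ sufficiently close to $\overline{x}$ shares the value $F_{\gamma}(\overline{x})$. This is the genuine technical point; its justification should combine local minimality with the subregularity estimate (ruling out a critical point $\bar{x}$ with $F_{\gamma}(\bar{x})>F_{\gamma}(\overline{x})$ via a perturbation-and-subregularity contradiction argument). Once this fact is in hand, both Part (i) and Part (ii) follow from the routine applications of the descent lemma and the subregularity bound outlined above.
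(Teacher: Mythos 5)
Your Part (i) is essentially the paper's own argument: the uniform bounds $\frac{2\alpha}{L+2\alpha}I\preceq Q_{\gamma}(x)\preceq 2I$, the identity $\nabla F_{\gamma}=Q_{\gamma}\mathcal{R}_{\gamma}$, and the local coincidence of $\mathcal{R}_{\gamma}^{-1}(0)$ with $(\partial F_{\gamma})^{-1}(0)$ give the two-sided comparability of the quantities in Definition \ref{Def2.2}. One small misattribution: the differentiability of $F_{\gamma}$ near $\overline{x}$ cannot be drawn from Proposition \ref{Forder-Fgam}, which requires prox-regularity of $g$ at $\overline{x}$ — an assumption this lemma does not make. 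The paper instead derives it from the stated single-valuedness hypothesis together with \cite[Exercise 10.7 \& Example 10.32]{RW09}. This also matters for your Part (ii): local Lipschitz continuity of $\nabla F_{\gamma}$ is likewise not available from Proposition \ref{Forder-Fgam} here; the one-sided quadratic upper estimate you need should instead be extracted from the fact that $\frac{1}{2\gamma}\|\cdot\|^{2}-e_{\gamma}g$ is convex.

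For Part (ii) the paper simply invokes \cite[Remark 2.2 (iii)]{artacho2013metric}, so your self-contained route (descent-lemma \L{}ojasiewicz inequality plus a gradient-flow length estimate) is genuinely different. However, it contains a genuine gap, and you have correctly located it yourself: the passage from $F_{\gamma}(x)-F_{\gamma}(\bar{x})\le C\|\nabla F_{\gamma}(x)\|^{2}$ (with $\bar{x}$ the projection of $x$ onto the critical set) to the same inequality with $F_{\gamma}(\overline{x})$ in place of $F_{\gamma}(\bar{x})$ requires that $F_{\gamma}$ be constant, equal to $F_{\gamma}(\overline{x})$, on $\mathcal{R}_{\gamma}^{-1}(0)$ near $\overline{x}$. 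Local minimality only gives $F_{\gamma}(\bar{x})\ge F_{\gamma}(\overline{x})$, which is the wrong direction for your estimate, and the constancy of nearby critical values is precisely the nontrivial content of the subregularity-implies-quadratic-growth theorem — it is, for instance, imposed as a separate standing hypothesis in Assumption \ref{ass-local} (iii) of this very paper in a closely related context. Your proposed remedy, a "perturbation-and-subregularity contradiction argument," is not carried out, and it is not evident that a naive perturbation works: the known proofs (e.g., via Ekeland's variational principle in the style of Arag\'on Artacho--Geoffroy or Drusvyatskiy--Mordukhovich--Nghia) require a careful choice of the Ekeland parameters combined with the subregularity constant. Until that step is actually executed (or replaced by a citation, as the paper does), Part (ii) is not proved. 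The remaining gradient-flow technicalities (keeping the trajectory inside the ball where the \L{}ojasiewicz inequality holds, convergence of the flow to a critical point, and differentiating $\sqrt{F_{\gamma}(\cdot)-F_{\gamma}(\overline{x})}$ where the value vanishes) are standard but should also be acknowledged.
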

 \begin{proof}
 Noting that  $\gamma \in (0,\frac{1}{L+2\alpha}]$ and $\|\nabla^2\!f(x)\|_2\leq L$ for every $x\in\mathcal{O}$, we have
\begin{equation}\label{minieigen-Qk}
  \lambda_{\min}(Q_{\gamma}(x)) \geq 1-\gamma \|\nabla^2\!f(x)\|_2 \geq 2\alpha/(L+2\alpha)\quad\forall x\in\mathcal{O}.
 \end{equation}
 By the given assumption and \cite[Exercise 10.7 $\&$ Example 10.32]{RW09}, there exists $\delta_0>0$ such that $e_{\gamma}$ is differentiable on $\mathbb{B}$, so $F_{\gamma}$ is differentiable on $\mathbb{B}(\overline{x},\delta_0)\subset\mathcal{O}$ with $\nabla\!F_{\gamma}(x) = Q_{\gamma}(x)\mathcal{R}_{\gamma}(x)$ for $x\in\mathbb{B}(\overline{x},\delta_0)$. Moreover, from \eqref{minieigen-Qk}, $(\mathcal{R}_{\gamma})^{-1}(0)\cap\mathbb{B}(\overline{x},\delta_0) = (\partial F_{\gamma})^{-1}(0)\cap\mathbb{B}(\overline{x},\delta_0)$. For any $\delta'\in(0,\delta_0/2)$ and $x\in\mathbb{B}(\overline{x},\delta')$, we deduce that
 \begin{align}\label{ineq-Fgam}
 &{\rm dist}(x, (\mathcal{R}_{\gamma})^{-1}(0))={\rm dist}(x, (\mathcal{R}_{\gamma})^{-1}(0)\cap\mathbb{B}(\overline{x},\delta_0))\nonumber\\
 &={\rm dist}(x,(\partial F_{\gamma})^{-1}(0)\cap\mathbb{B}(\overline{x},\delta_0))={\rm dist}(x,(\partial F_{\gamma})^{-1}(0)).
 \end{align} 

 \noindent
 (i)  ``$\Longrightarrow$'': Since $\mathcal{R}_{\gamma}$ is subregular at $\overline{x}$ for $0$, there exist $\delta_1>0$ and $\kappa_1>0$ such that 
 \[
  {\rm dist}(x, (\mathcal{R}_{\gamma})^{-1}(0))\leq \kappa_1{\rm dist}(0,\mathcal{R}_{\gamma}(x))\quad{\rm for\ all}\ x\in\mathbb{B}(\overline{x},\delta_1).
 \]
 Set $\delta=\min\{\delta_0,\delta_1\}/2$. Fix any $x\in\mathbb{B}(\overline{x},\delta)$. From the above inequality, \eqref{minieigen-Qk}, \eqref{ineq-Fgam}, and $\nabla\!F_{\gamma}(x) = Q_{\gamma}(x)\mathcal{R}_{\gamma}(x)$, it immediately follows that 
 \begin{equation*}
  {\rm dist}(x,(\partial F_{\gamma})^{-1}(0))\leq\kappa_1\|Q_{\gamma}(x)^{-1}\nabla\! F_{\gamma}(x)\|\le  \kappa_1[(L/2\alpha)+1]\|\nabla F_{\gamma}(x)\|. 
 \end{equation*}
 Then, by the arbitrariness of $x\in\mathbb{B}(\overline{x},\delta)$, the implication in this direction holds.

  \noindent
 ``$\Longleftarrow$''. As $\partial F_{\gamma}$ is subregular at $\overline{x}$ for $0$, there exist $\overline{\delta}\in(0,\delta_0)$ and $\overline{\kappa}>0$ such that 
 \[
  {\rm dist}(x,(\partial F_{\gamma})^{-1}(0))\leq\overline{\kappa}\,\|\nabla\!F_{\gamma}(x)\|\quad{\rm for\ all}\ x\in\mathbb{B}(\overline{x},\overline{\delta}).
 \]
 By Assumption \ref{ass-g} (i), there exists $\overline{c}>0$ such that $\|Q_{\gamma}(x)\|_2\le\overline{c}$ for all $x\in\mathbb{B}(\overline{x},\overline{\delta})$. 
 Fix any $x\in\mathbb{B}(\overline{x},\overline{\delta})$. From the above inequality, \eqref{ineq-Fgam}, and $\nabla\!F_{\gamma}(x) = Q_{\gamma}(x)\mathcal{R}_{\gamma}(x)$,  
 \begin{equation*}
  {\rm dist}(x, (\mathcal{R}_{\gamma})^{-1}(0))\leq\overline{\kappa}\|Q_{\gamma}(x)\mathcal{R}_{\gamma}(x)\| \leq \overline{c}\,\overline{\kappa}\|\mathcal{R}_{\gamma}(x)\|=\overline{c}\,\overline{\kappa}{\rm dist}(0,\mathcal{R}_{\gamma}(x)). 
 \end{equation*}
 Then, by the arbitrariness of $x\in\mathbb{B}(\overline{x},\overline{\delta})$, the implication in this direction follows.

 \noindent
 (ii) By part (i) and \cite[Remark 2.2 (iii)]{artacho2013metric}, there exist $\delta\in(0,\delta_0)$ and $\kappa>0$ such that for all $x\in\mathbb{B}(\overline{x},\delta)$, 
 $ F_{\gamma}(x) - F_{\gamma}(\overline{x}) \geq \kappa[{\rm dist}(x,(\partial F_{\gamma})^{-1}(0))]^2=\kappa[{\rm dist}(x,\mathcal{R}_{\gamma}^{-1}(0))]^2$, where the equality is due to the above \eqref{ineq-Fgam}. The proof is completed.
 \end{proof}

 To end this section, we present an auxiliary lemma.
 \begin{lemma}\label{prop-supp-Cx}
 Let $C\in\mathbb{R}^{l\times n}$ and $\nu >0$. For any $x,y\in\mathbb{R}^n$ with $|Cx|_{\min} \geq \nu$, $|Cy|_{\min} \geq \nu$ and $\|x-y\| < \frac{\nu}{\|C\|_2}$, we have ${\rm supp}(Cx) = {\rm supp}(Cy)$.
\end{lemma}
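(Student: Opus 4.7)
The plan is to establish equality of the two supports by showing each is contained in the other, exploiting the fact that every nonzero coordinate of $Cx$ (resp.\ $Cy$) has magnitude at least $\nu$, while the difference $Cx-Cy$ is uniformly smaller than $\nu$ in modulus.

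First, I would use the operator-norm bound
\[
 |(Cx)_i-(Cy)_i|\le \|Cx-Cy\|\le\|C\|_2\,\|x-y\|<\nu\quad\text{for all }i\in[l],
\]
which follows directly from the hypothesis $\|x-y\|<\nu/\|C\|_2$. Then, for any $i\in{\rm supp}(Cx)$, the assumption $|Cx|_{\min}\ge\nu$ gives $|(Cx)_i|\ge\nu$, so by the reverse triangle inequality
\[
 |(Cy)_i|\ge |(Cx)_i|-|(Cx)_i-(Cy)_i|>\nu-\nu=0,
\]
forcing $i\in{\rm supp}(Cy)$. This yields ${\rm supp}(Cx)\subseteq{\rm supp}(Cy)$. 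Since the hypotheses on $x$ and $y$ are symmetric, the same argument with the roles of $x$ and $y$ exchanged gives the reverse inclusion, and hence equality.

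There is essentially no difficult step here: the argument is a one-line reverse-triangle-inequality estimate once the perturbation $\|Cx-Cy\|$ has been controlled by $\|C\|_2\|x-y\|$. The only point that has to be stated cleanly is that the coordinatewise modulus $|(Cx)_i-(Cy)_i|$ is majorized by the Euclidean norm $\|Cx-Cy\|$, so that the strict bound $<\nu$ on the latter is inherited by each coordinate and can be compared against $|Cx|_{\min}$ and $|Cy|_{\min}$.
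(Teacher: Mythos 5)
Your argument is correct and is essentially identical to the paper's proof: both bound the coordinatewise perturbation by $\|Cx-Cy\|\le\|C\|_2\|x-y\|<\nu$, apply the reverse triangle inequality to show every index in ${\rm supp}(Cx)$ lies in ${\rm supp}(Cy)$, and invoke symmetry for the reverse inclusion. No issues.
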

\begin{proof}
    For $j \in {\rm supp}(Cx)$,
    $|Cy|_j \geq |Cx|_j - |Cx-Cy|_j \geq |Cx|_{\min} - \|Cx-Cy\| > \nu - \nu = 0,$
    which implies that $j\in {\rm supp}(Cy)$ and hence ${\rm supp}(Cx) \subset {\rm supp}(Cy)$. The inverse inclusion holds. The proof is completed. 
\end{proof}
 \section{Globalized semismooth Newton method}\label{sec3}

 To begin with, we make assumptions that $g$ is prox-regular and prox-bounded to ensure the continuous differentiability of $F_{\gamma}$ with small $\gamma$.
 \begin{assumption}\label{ass-prox-regular}
     $g$ is prox-regular over ${\rm dom}g$ and prox-bounded with threshold $\gamma_g \geq 1/L$, where $L$ is the Lipschitz constant of $\nabla f$ on $\mathcal{O}$.
 \end{assumption}
 
  It is known from \cite{ochs2018local,wu23b} that almost all the sparsity-induced functions are prox-regular and prox-bounded. For any $\gamma>0$ and $x\in\mathcal{O}$ such that $\mathcal{P}_{\gamma}g$ is single-valued and locally Lipschitz continuous around $x-\!\gamma\nabla\!f(x)$ and $Q_{\gamma}(x)$ is nonsingular, define
 \begin{equation}\label{GJac-Fgam}
  \partial^2 F_{\gamma}(x)\!:=\big\{\gamma^{-1} Q_{\gamma}(x)(I - WQ_{\gamma}(x))\ |\ W \in \partial_C\mathcal{P}_{\gamma}g(x-\gamma \nabla\!f(x))\big\}.
 \end{equation}
  It is worth noting that this definition follows the one in \cite{Themelis19}, by omitting the term involved the third derivative of $f$ in $\partial_C (\nabla F_{\gamma})(x)$.
 For such $\gamma$ and $x$, by Proposition \ref{prop-CJac}, every element of $\partial^2 F_{\gamma}(x)$ is symmetric. It is worth mentioning that $\partial^2 F_{\gamma}(x)$ is an approximation to $\partial_C(\nabla F_{\gamma})(x)$. Indeed, if $f$ is quadratic, $\partial^2 F_{\gamma}(x)=\partial_C(\nabla F_{\gamma})(x)$ on the set where $F_{\gamma}$ is differentiable and $\nabla F_{\gamma}$ is locally Lipschitz continuous.  In general, as $\mathcal{R}_{\gamma}(x)$ is a residual function, $\partial^2 F_{\gamma}(x)$ is a good approximation to $\partial_C(\nabla F_{\gamma})(x)$. The iteration steps of our algorithm are described as follows.
 \begin{algorithm}[!ht]
 \caption{(a hybrid of PG and semismooth Newton method (PGSSN))}\label{hybrid}
 \textbf{Input:} $\epsilon\geq 0,\beta\in(0, 1),\alpha>0,\overline{\gamma}\ge\frac{1}{L+\alpha},\widetilde{L}=L+2\alpha$, $\tau\in (0,1)$ and $\varrho\in (0,1-\tau)$, $0<\underline{\varsigma}<\overline{\varsigma}$ and $0<\underline{\sigma}<\overline{\sigma}$. Set $k:=0$ and select an initial point $y^0\in\mathbb{R}^n$. 
	
\medskip
{\bf PG step:}

    (1a)
    Seek $\gamma_k \in [\frac{1}{L+\alpha}, \overline{\gamma}]$ and $x^{k} \in \mathcal{T}_{\gamma_{k}}(y^k)$ such that $F(x^{k}) \leq F_{\gamma_k}(y^k) - \frac{\alpha}{2} \|x^k - y^k\|^2$.

    (1b) If $\gamma_k^{-1}\|x^k - y^k\| \leq \epsilon$, then output $y^k$; else go to step (2a). 
	
    \medskip
    {\bf Semismooth Newton step:}

    (2a) Set $\widetilde{\gamma}_k\!:= \widetilde{L}^{-1}\beta^{m_k}$ where
    $m_k$ is the smallest nonnegative integer $m$ such that
    \begin{equation*}
     \mathcal{T}_{\widetilde{L}^{-1}\beta^{m}}\ {\rm is \ locally \ single \ valued\  and\ Lipschitz \ continuous\  at }\ x^k. 
    \end{equation*}

  (2b) Let $\mu_k\!:=\|\mathcal{R}_{\widetilde{\gamma}_k}(x^k)\|^{\tau}$.  Pick $H_k\in\partial^2 F_{\widetilde{\gamma}_k}(x^k),\sigma_k \in [\underline{\sigma},\overline{\sigma}]$ and $\varsigma_k \in [\underline{\varsigma},\overline{\varsigma}]$. 
  Compute
 \begin{align*}
 d^k&\in\mathop{\arg\min}_{d\in\mathbb{R}^n}\frac{1}{2}d^{\top}\big(H_k+\mu_k\sigma_k\big)d + (Q_{\widetilde{\gamma}_k}(x^k)\mathcal{R}_{\widetilde{\gamma}_k}(x^k))^{\top}d  \quad {\rm s.t.}\ \ \|d\|\leq \varsigma_k \|\mathcal{R}_{\widetilde{\gamma}_k}(x^k)\|^{\varrho}.
 \end{align*} 
 	
 (2c) Let $l_k$ be the smallest nonnegative integer $l$ such that
 \begin{equation*}
  F_{\widetilde{\gamma}_k}\big(\beta^{l}(x^k\!+d^k) + (1-\beta^{l})\mathcal{T}_{\widetilde{\gamma}_k}(x^k)\big) \leq F_{\widetilde{\gamma}_{k}}(x^{k}) - \frac{\widetilde{\gamma}_{k}-\widetilde{\gamma}_{k}^2L}{4} \|\mathcal{R}_{\widetilde{\gamma}_{k}}(x^k)\|^2, 
    \end{equation*}
 \qquad and set $y^{k+1}\!:=\beta^{l_k}(x^k\!+d^k) + (1-\beta^{l_k})\mathcal{T}_{\widetilde{\gamma}_k}(x^k)$. Let $k \gets k+1$ and go to (1a).
\end{algorithm}	
\begin{remark}\label{remark-hybrid}
 {\rm (i)} As will be shown in Lemma \ref{well-defined} below, Algorithm \ref{hybrid} is well defined, which produces two iterate sequences $\{x^k\}_{k\in\mathbb{N}}\subset {\rm dom}g$ and $\{y^k\}_{k\in\mathbb{N}}\subset \mathcal{O}$. Among others, $\{x^k\}_{k\in\mathbb{N}}$ is generated by the PG step, and $\{y^k\}_{k\in\mathbb{N}}$ is produced from the semismooth Newton step for  $0\in \mathcal{R}_{\widetilde{\gamma}_{k-1}}(x)$. 
 It is worth noting that $\{y^{k}\}_{k\in\mathbb{N}}$ is not necessarily feasible, while $\{x^k\}_{k\in\mathbb{N}}$ is feasible.

 \noindent
 {\rm  (ii)} In step (2a), we search for $\widetilde{\gamma}_k>0$ such that $\mathcal{T}_{\widetilde{\gamma}_k}$ is single-valued and locally Lipschitz continuous at $x^k$, which guarantees that the Clarke Jacobian of $\mathcal{R}_{\widetilde{\gamma}_k}$ at $x^k$ is nonempty. For some specific $g$ satisfying Assumption \ref{ass-prox-regular}, numerical methods can be used to check whether $\mathcal{T}_{\widetilde{L}^{-1}\beta^m}$ is single-valued and locally Lipschitz continuous at $x^k$. 

 \noindent
 {\rm  (iii)} Now we take a closer look at step (2b). The common semismooth Newton step for minimizing $F_{\widetilde{\gamma}_k}$ is to solve the linear system $H_kd=\nabla F_{\widetilde{\gamma}_k}(x^k) = Q_{\widetilde{\gamma}_k}(x^k)\mathcal{R}_{\widetilde{\gamma}_k}(x^k)$ with $H_k\in\partial_{C}(\nabla F_{\widetilde{\gamma}_k})(x^k)$. Observe that $\partial_{C}(\nabla F_{\widetilde{\gamma}_k})(x^k)$ involves third-time derivative of $f$. We favor $H_k\in\partial^2 F_{\widetilde{\gamma}_k}(x^k)$ which also captures the second-order information of $F_{\widetilde{\gamma}_k}$ well. Note that the subproblem in step (2b) to determine the semismooth Newton direction has a nonempty and compact feasible set, and its objective function is continuous, for which the direction $d^k$ in step (2b)  is well defined. 
 \end{remark}
 \begin{lemma}\label{well-defined}
  Under Assumptions \ref{ass-g}-\ref{ass-prox-regular}, Algorithm \ref{hybrid} is well-defined.
 \end{lemma}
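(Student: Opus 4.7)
The plan is to verify well-definedness of each of the four nontrivial steps of Algorithm \ref{hybrid}: the PG step (1a), the backtracking search (2a) for a step-size ensuring single-valued local Lipschitz behavior of $\mathcal{T}_{\widetilde{\gamma}_k}$ at $x^k$, the semismooth Newton direction subproblem (2b), and the Armijo-type line search (2c). The feasibility of $\{x^k\}\subset{\rm dom}\,g$ and $\{y^k\}\subset\mathcal{O}$ claimed in Remark \ref{remark-hybrid} will drop out along the way.

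For step (1a) I would simply take $\gamma_k=1/(L+\alpha)$. By Assumption \ref{ass-prox-regular}, $g$ is prox-bounded with threshold $\gamma_g\ge 1/L$, so $\mathcal{T}_{\gamma_k}(y^k)\ne\emptyset$; plugging $\gamma_k=1/(L+\alpha)$ into Lemma \ref{lemma-PGdescent} (ii) yields $(1-\gamma_k L)/(2\gamma_k)=\alpha/2$, which is exactly the descent inequality required in (1a). For step (2a), $x^k\in{\rm dom}\,g$, and the function $h(x):=g(x)+\langle\nabla f(x^k),x\rangle$ is prox-regular at $x^k$ by \cite[Exercise 13.35]{RW09}. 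Proposition \ref{prop-proxmap} then delivers some $\widehat\gamma>0$ such that $\mathcal{P}_\gamma h$ is single-valued and locally Lipschitz around $x^k$ for every $\gamma\in(0,\widehat\gamma)$. Transferring this through the shift $\mathcal{P}_\gamma g(\cdot-\gamma\nabla f(x^k))=\mathcal{P}_\gamma h(\cdot)$ exactly as in the proof of Proposition \ref{Forder-Fgam} shows that $\mathcal{T}_\gamma$ is locally single-valued and Lipschitz at $x^k$ for all sufficiently small $\gamma$; choosing $m$ large enough so that $\widetilde{L}^{-1}\beta^m<\widehat\gamma$ gives a finite $m_k$. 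Step (2b) is immediate: $\widetilde{\gamma}_k$ was just chosen so that $\mathcal{T}_{\widetilde{\gamma}_k}$ is locally single-valued and Lipschitz at $x^k$, hence by Proposition \ref{prop-CJac} the set $\partial^2 F_{\widetilde{\gamma}_k}(x^k)$ defined in \eqref{GJac-Fgam} is nonempty, and the subproblem minimizes a continuous function over a nonempty compact ball, so $d^k$ exists by Weierstrass.

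The main obstacle is step (2c), where we must show the Armijo-like inequality holds for some finite $l$. Set $u_k:=\mathcal{T}_{\widetilde{\gamma}_k}(x^k)$ (a single point by step (2a)) and $z^l:=\beta^l(x^k+d^k)+(1-\beta^l)u_k$, noting that $z^l\to u_k$ as $l\to\infty$. I would handle the degenerate case $\mathcal{R}_{\widetilde{\gamma}_k}(x^k)=0$ separately: the feasibility constraint in (2b) then forces $d^k=0$ and $u_k=x^k$, so $z^l=x^k$ for all $l$ and the inequality trivially holds with $l_k=0$. In the nondegenerate case, since $\widetilde{\gamma}_k\le\widetilde{L}^{-1}=1/(L+2\alpha)<1/L<\gamma_g$, Lemma \ref{lemma-PGdescent} applied with $x=x^k,\,z=u_k$ gives $F_{\widetilde{\gamma}_k}(u_k)\le F(u_k)\le F_{\widetilde{\gamma}_k}(x^k)-\tfrac{\widetilde{\gamma}_k-\widetilde{\gamma}_k^2 L}{2}\|\mathcal{R}_{\widetilde{\gamma}_k}(x^k)\|^2$, where I have used $\|u_k-x^k\|=\widetilde{\gamma}_k\|\mathcal{R}_{\widetilde{\gamma}_k}(x^k)\|$. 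Because $u_k\in{\rm dom}\,g\subset\mathcal{O}$ and $\mathcal{O}$ is open, $z^l\in\mathcal{O}$ for $l$ sufficiently large, and on $\mathcal{O}$ the envelope $F_{\widetilde{\gamma}_k}$ is continuous (as remarked right before Proposition \ref{Forder-Fgam}). Consequently $F_{\widetilde{\gamma}_k}(z^l)\to F_{\widetilde{\gamma}_k}(u_k)$, and picking $l$ so large that the gap is bounded by $\tfrac{\widetilde{\gamma}_k-\widetilde{\gamma}_k^2 L}{4}\|\mathcal{R}_{\widetilde{\gamma}_k}(x^k)\|^2>0$ yields the desired inequality. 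This guarantees $l_k<\infty$, completing the proof; in particular $y^{k+1}$ need not lie in ${\rm dom}\,g$ but it does lie in $\mathcal{O}$ for such $l_k$, consistent with Remark \ref{remark-hybrid}.
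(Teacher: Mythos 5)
Your proof is correct and follows essentially the same route as the paper: take $\gamma_k=1/(L+\alpha)$ for step (1a) via Lemma \ref{lemma-PGdescent}(ii), invoke Proposition \ref{Forder-Fgam} for step (2a), compactness of the trust-region ball for step (2b), and for step (2c) split off the case $\mathcal{R}_{\widetilde{\gamma}_k}(x^k)=0$ and otherwise combine Lemma \ref{lemma-PGdescent} with the continuity of $F_{\widetilde{\gamma}_k}$ at $\mathcal{T}_{\widetilde{\gamma}_k}(x^k)$. Your additional observation that $z^l\in\mathcal{O}$ for large $l$ (so that $F_{\widetilde{\gamma}_k}(z^l)$ is well defined) is a detail the paper leaves implicit, and is a welcome addition.
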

 \begin{proof}
 To show that Algorithm \ref{hybrid} is well defined, it suffices to argue that
 \begin{itemize}
 \item[(i)] there is $\gamma \in [\frac{1}{L+\alpha},\overline{\gamma}]$ such that $F(x)\leq F_{\gamma}(y^k) - \frac{\alpha}{2}\|x-y^k\|^2$ for all $x \in \mathcal{T}_{\gamma}(y^k)$; 

 \item[(ii)] there exists $m_k \in \mathbb{N}$ such that step (2a) holds;

 \item[(iii)] there exists $l_k\in \mathbb{N}$ such that step (2c) holds.
 \end{itemize}
 
 For part (i), according to Lemma \ref{lemma-PGdescent} (ii), for each $\gamma\in(0,\gamma_g)$ and $x \in \mathcal{T}_{\gamma}(y^k)$, we have
 $F(x) \leq F_{\gamma}(y^k) - \frac{1-\gamma L}{2\gamma}\|x - y^k\|^2.$
 Substituting $\gamma=\frac{1}{L+\alpha}$ into the above inequality proves the existence of the desired $\gamma$. Part (ii) directly follows by Assumption \ref{ass-prox-regular} and Proposition \ref{Forder-Fgam}. For part (iii),  if $\mathcal{R}_{\widetilde{\gamma}_k}(x^k) = 0$, from step (2b) we know that $d^k = 0$ and step (2c) directly holds with $l_k = 0$. For the other case,   by noting that $\{\mathcal{T}_{\widetilde{\gamma}_k}(x^k)\}_{k\in\mathbb{N}}\subset{\rm dom}g$, from Lemma \ref{lemma-PGdescent} (i), $F_{\widetilde{\gamma}_k}(\mathcal{T}_{\widetilde{\gamma}_k}(x^k)) \leq F(\mathcal{T}_{\widetilde{\gamma}_k}(x^k))$, which by Lemma \ref{lemma-PGdescent} (ii) yields that for $k\in\mathbb{N}$, 
 \begin{align*}
  F_{\widetilde{\gamma}_k}(\mathcal{T}_{\widetilde{\gamma}_k}(x^k)) \! \leq \! F_{\widetilde{\gamma}_k}(x^k) \!-\! \frac{1-\widetilde{\gamma}_k L}{2\widetilde{\gamma}_k}\|\mathcal{T}_{\widetilde{\gamma}_k}(x^k) - x^k\|^2 \!=\! F_{\widetilde{\gamma}_k}(x^k) - \frac{\widetilde{\gamma}_k\!-\!\widetilde{\gamma}_k^2 L}{2}\|\mathcal{R}_{\widetilde{\gamma}_k}(x^k)\|^2.
 \end{align*} 
 From \cite[Theorem 1.25]{RW09}, $F_{\widetilde{\gamma}_k}$ is continuous at $\mathcal{T}_{\widetilde{\gamma}_k}(x^k)$, which along with $\beta\in(0,1)$ implies that $F_{\widetilde{\gamma}_k}(\beta^l(x^{k}\!+\!d^k) + (1\!-\!\beta^{l}) \mathcal{T}_{\widetilde{\gamma}_k}(x^k)) \rightarrow F_{\widetilde{\gamma}_k}(\mathcal{T}_{\widetilde{\gamma}_k}(x^k))$ as $l\to\infty$. Together with the above inequality and $\widetilde{\gamma}_k-\widetilde{\gamma}_k^2 L>0$, we conclude that step (2c) must hold after a finite number of searches. The proof is completed. 
\end{proof}

 In the rest of this section, $\{x^k\}_{k\in\mathbb{N}}$ and $\{y^k\}_{k\in\mathbb{N}}$ denote the sequences generated by Algorithm \ref{hybrid} with $\epsilon=0$, and we assume that the algorithm will not stop within a finite number of iterations. 
 Next we provide some desirable properties of the iterate and objective sequences, and Proposition \ref{prop-descent} proves the convergence of $\{F(x^k)\}_{k\in\mathbb{N}}$.

 \begin{proposition}\label{prop-descent}
 Assume that Assumptions \ref{ass-g}-\ref{ass-prox-regular} hold. Then we have $$F(x^{k+1})\le F(x^k)-\frac{\alpha}{2}\|x^{k+1}- y^{k+1}\|^2,$$ so $\{F(x^k)\}_{k\in\mathbb{N}}$ is convergent with limit, denoted by $\overline{F}$, and $\lim_{k\to\infty}\|x^k-y^k\|=0$.
 \end{proposition}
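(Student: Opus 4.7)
The plan is to prove the descent inequality by chaining four estimates that link $F(x^{k+1})$ back to $F(x^k)$ through the forward-backward envelopes evaluated at the intermediate Newton iterate $y^{k+1}$:
\begin{equation*}
F(x^{k+1}) \;\le\; F_{\gamma_{k+1}}(y^{k+1}) - \tfrac{\alpha}{2}\|x^{k+1}-y^{k+1}\|^2 \;\le\; F_{\widetilde{\gamma}_k}(y^{k+1}) - \tfrac{\alpha}{2}\|x^{k+1}-y^{k+1}\|^2 \;\le\; F_{\widetilde{\gamma}_k}(x^k) - \tfrac{\alpha}{2}\|x^{k+1}-y^{k+1}\|^2 \;\le\; F(x^k) - \tfrac{\alpha}{2}\|x^{k+1}-y^{k+1}\|^2.
\end{equation*}
The first estimate is the very acceptance rule of the PG step (1a) applied at iteration $k+1$. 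The third comes directly from the Newton line search (2c) together with the sign of its decrement: since $\widetilde{\gamma}_k \le 1/\widetilde{L} < 1/L$, the quantity $\widetilde{\gamma}_k-\widetilde{\gamma}_k^2 L$ is nonnegative, so the subtracted residual term is nonnegative and can be discarded.

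The remaining two links are the only places that require a short argument. For the second, I would observe that from the infimum definition
\begin{equation*}
F_{\gamma}(x) = \inf_{z\in\mathbb{R}^n}\Big\{f(x)+\langle\nabla\!f(x),z-x\rangle+\tfrac{1}{2\gamma}\|z-x\|^2+g(z)\Big\},
\end{equation*}
the integrand is pointwise nonincreasing in $\gamma$, hence so is $F_{\gamma}(x)$. Together with the algorithmic bounds $\gamma_{k+1}\ge 1/(L+\alpha) > 1/(L+2\alpha) \ge \widetilde{\gamma}_k$, this yields $F_{\gamma_{k+1}}(y^{k+1})\le F_{\widetilde{\gamma}_k}(y^{k+1})$. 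For the fourth, I would invoke Lemma \ref{lemma-PGdescent}(i): $x^k\in\mathcal{T}_{\gamma_k}(y^k)\subset{\rm dom}\,g$ as the image of $\mathcal{P}_{\gamma_k}g$, and $\widetilde{\gamma}_k\le 1/\widetilde{L}<1/L\le\gamma_g$ by Assumption \ref{ass-prox-regular}, so $F_{\widetilde{\gamma}_k}(x^k)\le F(x^k)$. Concatenating the four links delivers
\begin{equation*}
F(x^{k+1})\le F(x^k)-\tfrac{\alpha}{2}\|x^{k+1}-y^{k+1}\|^2.
\end{equation*}

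The remaining two conclusions then follow routinely. The sequence $\{F(x^k)\}$ is nonincreasing and bounded below by $\underline{F}$ (Assumption \ref{ass-g}(iii)), so it converges to some $\overline{F}\ge\underline{F}$. Telescoping the descent yields
\begin{equation*}
\sum_{k=0}^{\infty}\|x^{k+1}-y^{k+1}\|^2 \;\le\; \tfrac{2}{\alpha}\bigl(F(x^0)-\overline{F}\bigr) \;<\; \infty,
\end{equation*}
hence $\|x^{k+1}-y^{k+1}\|\to 0$ and so $\|x^k-y^k\|\to 0$. There is no genuine obstacle in this proof; the only delicate points are (a) getting the monotonicity of $\gamma\mapsto F_\gamma(x)$ in the right direction so that the larger stepsize $\gamma_{k+1}$ appearing in the PG acceptance rule can be dominated by the smaller Newton-phase stepsize $\widetilde{\gamma}_k$, and (b) verifying that $\widetilde{\gamma}_k$ stays strictly below the prox-boundedness threshold $\gamma_g$ so that Lemma \ref{lemma-PGdescent}(i) is applicable, both of which are immediate from the parameter choices $\overline{\gamma}\ge 1/(L+\alpha)$, $\widetilde{L}=L+2\alpha$, and $\gamma_g\ge 1/L$.
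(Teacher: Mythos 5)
Your proposal is correct and follows essentially the same route as the paper: the identical four-link chain $F(x^{k+1})\le F_{\gamma_{k+1}}(y^{k+1})-\frac{\alpha}{2}\|x^{k+1}-y^{k+1}\|^2\le F_{\widetilde{\gamma}_k}(y^{k+1})-\cdots\le F_{\widetilde{\gamma}_k}(x^k)-\cdots\le F(x^k)-\frac{\alpha}{2}\|x^{k+1}-y^{k+1}\|^2$, followed by telescoping against the lower bound $\underline{F}$. Your pointwise-monotonicity argument for $\gamma\mapsto F_\gamma(y^{k+1})$ is just a rephrasing of the paper's comparison $F_{\gamma_{k+1}}(y^{k+1})\le\ell_{\gamma_{k+1}}(\widetilde{z}^{k+1};y^{k+1})\le\ell_{\widetilde{\gamma}_k}(\widetilde{z}^{k+1};y^{k+1})=F_{\widetilde{\gamma}_k}(y^{k+1})$ at a minimizer $\widetilde{z}^{k+1}\in\mathcal{T}_{\widetilde{\gamma}_k}(y^{k+1})$.
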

 \begin{proof}
 Fix $k\in\mathbb{N}$. By the definitions of $\gamma_{k+1}$ and $\widetilde{\gamma}_k$, $\gamma_{k+1} \geq \frac{1}{L+\alpha} > \widetilde{\gamma}_k$. Pick $\widetilde{z}^{k+1} \in \mathcal{T}_{\widetilde{\gamma}_{k}}(y^{k+1})$. Together with the definition of $\ell$ in \eqref{def-fbe}, it holds that
 \begin{equation}\label{FBEcomp}
  F_{\gamma_{k+1}}(y^{k+1}) \le \ell_{\gamma_{k+1}}(\widetilde{z}^{k+1};y^{k+1}) \le \ell_{\widetilde{\gamma}_k}(\widetilde{z}^{k+1};y^{k+1}) = F_{\widetilde{\gamma}_k}(y^{k+1}),
 \end{equation}
 which along with Lemma \ref{lemma-PGdescent} implies that 
 \begin{align}\label{descent-K21}
  F(x^{k+1}) & \leq F_{\gamma_{k+1}}(y^{k+1})- \frac{\alpha}{2} \|x^{k+1} - y^{k+1}\|^2 \leq F_{\widetilde{\gamma}_{k}}(y^{k+1})- \frac{\alpha}{2} \|x^{k+1} - y^{k+1}\|^2 \nonumber\\
  & \leq F_{\widetilde{\gamma}_k}(x^k) - \frac{\alpha}{2} \|x^{k+1} - y^{k+1}\|^2 \leq F(x^k)  - \frac{\alpha}{2} \|x^{k+1} - y^{k+1}\|^2,
 \end{align}
 where the third inequality is due to step (2c) in Algorithm \ref{hybrid}. Thus, for any $l\in\mathbb{N}$, we have $\frac{\alpha}{2}\sum_{k=0}^{l}\|x^{k+1}-y^{k+1}\|^2\le F(x^0)-F(x^{l+1})$. Passing the limit $l\to\infty$ and using the lower boundedness of $F$ in Assumption \ref{ass-g} (iii) leads to $\lim_{k\to\infty}\|x^k-y^k\|=0$. 
\end{proof}

To conduct further convergence analysis, we make the following assumption.
 \begin{assumption}\label{ass-bounded}
     The sequence $\{x^k\}_{k\in\mathbb{N}}$ is bounded.
 \end{assumption}
 
We remark here that this assumption can be guaranteed if the objective function is level bounded, by noting that $\{F(x^k)\}_{k\in\mathbb{N}}$ is a descent sequence by Proposition \ref{prop-descent}. Next, we prove the boundedness of $\{y^k\}_{k\in\mathbb{N}}$, $\{\gamma_k\}_{k\in\mathbb{N}}$ and $\{\widetilde{\gamma}_k\}_{k\in\mathbb{N}}$ under Assumption \ref{ass-bounded}, as well as the descent property of $\{F_{\widetilde{\gamma}_k}(x^k)\}_{k\in\mathbb{N}}$.
 
\begin{lemma}\label{lemma-bounded}
   Under Assumptions \ref{ass-g}-\ref{ass-bounded}, the following results hold.
   \begin{itemize}
       \item[{\rm (i)}] There exists a compact set $\Delta$ containing both $\{x^k\}_{k\in\mathbb{N}}$ and $\{y^k\}_{k\in\mathbb{N}}$. 
 
 \item[{\rm (ii)}] There is $\gamma_{\min}>0$ such that $\{\widetilde{\gamma}_k\}_{k\in\mathbb{N}}\!\subset[\gamma_{\min}, \widetilde{L}^{-1}]$ and $\{\gamma_k\}_{k\in\mathbb{N}}\subset\![\gamma_{\min},\overline{\gamma}]$. 

 \item[{\rm (iii)}]  There exists $\overline{c}_1>0$ (only depending on $\gamma_{\rm min}$ and $L$) such that for all $k\in\mathbb{N}$, 
 $\max\big\{F(x^{k+1}) - F(x^k),F_{\widetilde{\gamma}_{k+1}}(x^{k+1}) - F_{\widetilde{\gamma}_{k}}(x^k)\big\}\leq -\overline{c}_1\|\mathcal{R}_{\widetilde{\gamma}_{k}}(x^k)\|^2.$
   \end{itemize} 
\end{lemma}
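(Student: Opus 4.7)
The plan is to handle the three parts in order, using Proposition \ref{prop-descent}, Proposition \ref{Forder-Fgam}, Lemma \ref{lemma-PGdescent}, and a finite-cover argument.

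Part (i) is essentially immediate. Assumption \ref{ass-bounded} gives boundedness of $\{x^k\}_{k\in\mathbb{N}}$, and Proposition \ref{prop-descent} shows $\|x^k-y^k\|\to 0$, so $\{y^k\}_{k\in\mathbb{N}}$ is bounded too and any closed ball containing both sequences serves as $\Delta$. For use in part (ii), I would also record that $f$ is continuous on $\mathcal{O}$ and $\{F(x^k)\}_{k\in\mathbb{N}}$ is monotone (Proposition \ref{prop-descent}) and bounded below, so $\{g(x^k)\}_{k\in\mathbb{N}}$ is bounded; the lower semicontinuity of $g$ then forces every accumulation point of $\{x^k\}_{k\in\mathbb{N}}$ to lie in $\mathrm{dom}\,g$, whence $X:=\overline{\{x^k\}}$ is a compact subset of $\mathrm{dom}\,g$.

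For part (ii), the bound $\gamma_k\in[1/(L+\alpha),\overline{\gamma}]$ is hardwired in step (1a), so the only task is a uniform lower bound for $\widetilde{\gamma}_k$, which is the crux of the lemma. By Assumption \ref{ass-prox-regular} and Proposition \ref{Forder-Fgam}, at each $\overline{x}\in X$ there exists $\widehat{\gamma}(\overline{x})>0$ such that for every $\gamma\in(0,\widehat{\gamma}(\overline{x}))$, $\mathcal{T}_{\gamma}$ is single-valued and Lipschitz on some neighborhood of $\overline{x}$. For each $\overline{x}\in X$ I would commit to one concrete discrete value $\gamma^{\ast}(\overline{x})=\widetilde{L}^{-1}\beta^{m(\overline{x})}$ lying inside this threshold, together with an associated open neighborhood $\mathbb{B}(\overline{x},\varepsilon^{\ast}(\overline{x}))$ on which $\mathcal{T}_{\gamma^{\ast}(\overline{x})}$ is single-valued and Lipschitz. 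Compactness of $X$ then yields a finite subcover by balls centered at $\overline{x}_1,\dots,\overline{x}_N$ with associated integers $m(\overline{x}_1),\dots,m(\overline{x}_N)$. Setting $m^{\ast}:=\max_i m(\overline{x}_i)$, every $x^k$ lies in some $\mathbb{B}(\overline{x}_i,\varepsilon^{\ast}(\overline{x}_i))$, so $\mathcal{T}_{\widetilde{L}^{-1}\beta^{m(\overline{x}_i)}}$ is locally single-valued and Lipschitz at $x^k$; since step (2a) takes the \emph{smallest} admissible integer, $m_k\le m^{\ast}$ and hence $\widetilde{\gamma}_k\ge\widetilde{L}^{-1}\beta^{m^{\ast}}$. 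Taking $\gamma_{\min}$ to be the minimum of this with $1/(L+\alpha)$ completes (ii). The delicate point -- and the principal obstacle of the whole lemma -- is that the neighborhood produced by Proposition \ref{Forder-Fgam} shrinks with $\gamma$, so I must fix a concrete $\gamma^{\ast}(\overline{x})$ per cover point \emph{before} extracting the subcover; trying to fix one small $\gamma$ uniformly first and then cover would fail.

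Part (iii) is a routine chaining of inequalities fed by (ii). Step (2c) gives $F_{\widetilde{\gamma}_k}(y^{k+1})\le F_{\widetilde{\gamma}_k}(x^k)-\tfrac{\widetilde{\gamma}_k-\widetilde{\gamma}_k^2 L}{4}\|\mathcal{R}_{\widetilde{\gamma}_k}(x^k)\|^2$, and the chain already used in the proof of Proposition \ref{prop-descent}, namely $F(x^{k+1})\le F_{\gamma_{k+1}}(y^{k+1})-\tfrac{\alpha}{2}\|x^{k+1}-y^{k+1}\|^2\le F_{\widetilde{\gamma}_k}(y^{k+1})$, combined with Lemma \ref{lemma-PGdescent}(i) in the forms $F_{\widetilde{\gamma}_k}(x^k)\le F(x^k)$ and $F_{\widetilde{\gamma}_{k+1}}(x^{k+1})\le F(x^{k+1})$, yields both $F(x^{k+1})\le F(x^k)-\tfrac{\widetilde{\gamma}_k-\widetilde{\gamma}_k^2 L}{4}\|\mathcal{R}_{\widetilde{\gamma}_k}(x^k)\|^2$ and $F_{\widetilde{\gamma}_{k+1}}(x^{k+1})\le F_{\widetilde{\gamma}_k}(x^k)-\tfrac{\widetilde{\gamma}_k-\widetilde{\gamma}_k^2 L}{4}\|\mathcal{R}_{\widetilde{\gamma}_k}(x^k)\|^2$. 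Finally, $\widetilde{\gamma}_k\in[\gamma_{\min},\widetilde{L}^{-1}]$ from (ii) makes $\widetilde{\gamma}_k(1-\widetilde{\gamma}_k L)$ a positive constant bounded below by $\gamma_{\min}(1-L/\widetilde{L})$, and dividing by $4$ gives the desired $\overline{c}_1>0$.
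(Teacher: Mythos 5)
Your proof is correct and follows essentially the same route as the paper's: boundedness of $\{y^k\}$ from Proposition \ref{prop-descent}, a Heine--Borel finite-cover argument built on Proposition \ref{Forder-Fgam} to get a uniform bound $m_k\le m^{\ast}$ and hence $\widetilde{\gamma}_k\ge\widetilde{L}^{-1}\beta^{m^{\ast}}$, and the chain $F_{\widetilde{\gamma}_{k+1}}(x^{k+1})\le F(x^{k+1})\le F_{\widetilde{\gamma}_k}(y^{k+1})\le F_{\widetilde{\gamma}_k}(x^k)-\tfrac{\widetilde{\gamma}_k-\widetilde{\gamma}_k^2L}{4}\|\mathcal{R}_{\widetilde{\gamma}_k}(x^k)\|^2\le F(x^k)-\cdots$ for (iii). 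Your added care in (ii) -- verifying that the accumulation points of $\{x^k\}$ lie in ${\rm dom}\,g$ before invoking Proposition \ref{Forder-Fgam}, and fixing a concrete $\gamma^{\ast}(\overline{x})$ per cover point before extracting the subcover -- is a sound (indeed slightly tighter) rendering of what the paper does implicitly.
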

\begin{proof}
(i) From Assumption \ref{ass-bounded}, $\{x^k\}_{k\in\mathbb{N}}$ is a bounded sequence. Moreover, from Proposition \ref{prop-descent}, we have for all $k\in\mathbb{N}$, $\|x^{k} - y^k\| \leq \sqrt{2\alpha^{-1}(F(x^{k-1})-F(x^k))} \leq \sqrt{2\alpha^{-1}(F(x^0)-\underline{F})},$
 which implies that $\{y^k\}_{k\in\mathbb{N}}$ is also bounded. Then, there exists a compact set $\Delta\subset\mathbb{R}^n$ such that both sequences are contained in $\Delta$.

 \noindent
 (ii) We claim that there exists $\gamma_{\min}>0$ such that $\widetilde{\gamma}_k \in [\gamma_{\min},\widetilde{L}^{-1}]$ for every $k\in\mathbb{N}$. By Proposition \ref{Forder-Fgam}, for each $x\in \Delta$, where $\Delta$ is the set in (i), there exist $m_x \in \mathbb{N}$ and $\epsilon_x > 0$ such that $\mathbb{B}(x,\epsilon_x)\subset\mathcal{O}$ and $\mathcal{T}_{\widetilde{L}^{-1}\beta^{m_x}}$ is single-valued and Lipschitz continuous on $\mathbb{B}(x,\epsilon_x)$. Note that $\Delta \subset \bigcup_{x\in\Delta}\mathbb{B}^{\circ}(x,\epsilon_x)$. From Heine-Borel theorem, there exist $x_1,\ldots,x_q\in\Delta$ such that $\Delta \subset \bigcup_{i\in [q]} \mathbb{B}^{\circ}(x_i, \epsilon_{x_i})$. Let $m_{\max}\!:=\max\{m_{x_1},\ldots,m_{x_q}\}$. Then, for each $x\in\Delta$, there exists $m\in\{0,1,\ldots,m_{\max}\}$ such that $\mathcal{T}_{\widetilde{L}^{-1}\beta^m}$ is single-valued and locally Lipschitz continuous at $x$. Recall that $\{x^k\}_{k\in\mathbb{N}} \subset \Delta$. By the definition of $\widetilde{\gamma}_k$, we have $\widetilde{\gamma}_k\ge \widetilde{L}^{-1}\beta^{m_{\max}}:=\gamma_{\min}$. In addition, by step (2b) of Algorithm \ref{hybrid}, $\widetilde{\gamma}_k\leq \widetilde{L}^{-1}$. Thus, the claimed conclusion holds. Now from the PG step and $\gamma_{\min}\leq \widetilde{L}^{-1}<\frac{1}{L+\alpha}$, we immediately have $\{\gamma_k\}_{k\in\mathbb{N}} \subset [\gamma_{\min},\overline{\gamma}]$.

 \noindent
(iii) Fix any $k\in\mathbb{N}$. From the above \eqref{descent-K21}, we have $F(x^{k+1})\leq F_{\widetilde{\gamma}_{k}}(y^{k+1})$, which together with step (3b) and Lemma \ref{lemma-PGdescent} implies that 
  \begin{align*}
  F_{\widetilde{\gamma}_{k+1}}(x^{k+1})\leq F(x^{k+1}) \leq F_{\widetilde{\gamma}_{k}}(y^{k+1})
  &\leq F_{\widetilde{\gamma}_k}(x^k)-\frac{1}{4}(\widetilde{\gamma}_{k} -\widetilde{\gamma}_{k}^2L)\|\mathcal{R}_{\widetilde{\gamma}_{k}}(x^k)\|^2\\
  &\le F(x^{k})-\frac{1}{4}(\widetilde{\gamma}_{k} -\widetilde{\gamma}_{k}^2L)\|\mathcal{R}_{\widetilde{\gamma}_{k}}(x^k)\|^2.
 \end{align*} 
 By part (ii), we have $\widetilde{\gamma}_k\in [\gamma_{\min}, \widetilde{L}^{-1}]$.  Let $\overline{c}_1:=\frac{1}{4} \min\{\gamma_{\min}-\!L\gamma_{\min}^2, \widetilde{L}^{-1}-L\widetilde{L}^{-2}\}.$ We obtain $\widetilde{\gamma}_k - L \widetilde{\gamma}_k^2 \geq 4\overline{c}_1$, and hence the desired result holds.
\end{proof}
 \begin{proposition}\label{prop-dkzero}
  Under Assumptions \ref{ass-g}-\ref{ass-bounded}, the following two statements  hold.
 \begin{itemize}
 \item[{\rm (i)}]   $\lim_{k\rightarrow \infty} \|\mathcal{R}_{\widetilde{\gamma}_k}(x^k)\| = 0=\lim_{k \rightarrow \infty} \|d^k\|$.
 
 \item[{\rm (ii)}]  $\lim_{k\rightarrow\infty} \|x^{k+1} - x^k\|= 0.$
    \end{itemize}  
\end{proposition}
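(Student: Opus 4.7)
The plan is to derive (i) by telescoping the descent inequality from Lemma \ref{lemma-bounded} (iii), and then (ii) by decomposing $x^{k+1}-x^k$ into three pieces each controlled by the residual norm.

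First I would establish the summability $\sum_{k=0}^{\infty}\|\mathcal{R}_{\widetilde{\gamma}_k}(x^k)\|^2<\infty$. Using the first inequality of Lemma \ref{lemma-bounded} (iii), namely $F(x^{k+1})-F(x^k)\le -\overline{c}_1\|\mathcal{R}_{\widetilde{\gamma}_k}(x^k)\|^2$, and telescoping from $0$ to $N$ gives $\overline{c}_1\sum_{k=0}^{N}\|\mathcal{R}_{\widetilde{\gamma}_k}(x^k)\|^2\le F(x^0)-F(x^{N+1})\le F(x^0)-\underline{F}$, where the lower bound $\underline{F}$ comes from Assumption \ref{ass-g} (iii). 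Letting $N\to\infty$ yields the summability and hence $\|\mathcal{R}_{\widetilde{\gamma}_k}(x^k)\|\to 0$. For the second limit of (i), I would invoke the explicit feasibility bound $\|d^k\|\le \varsigma_k\|\mathcal{R}_{\widetilde{\gamma}_k}(x^k)\|^{\varrho}$ built into the trust-region-type subproblem of step (2b). Since $\varsigma_k\in[\underline{\varsigma},\overline{\varsigma}]$ is uniformly bounded and $\varrho>0$, $\|d^k\|\to 0$ follows directly from $\|\mathcal{R}_{\widetilde{\gamma}_k}(x^k)\|\to 0$.

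For (ii), I would write $x^{k+1}-x^k=(x^{k+1}-y^{k+1})+(y^{k+1}-x^k)$ and bound the two terms. The first term tends to zero by Proposition \ref{prop-descent}, which gives $\lim_{k\to\infty}\|x^{k+1}-y^{k+1}\|=0$. For the second term, the update rule in step (2c) yields
\[
y^{k+1}-x^k=\beta^{l_k}d^k+(1-\beta^{l_k})\bigl(\mathcal{T}_{\widetilde{\gamma}_k}(x^k)-x^k\bigr).
\]
Using the identity $\mathcal{T}_{\widetilde{\gamma}_k}(x^k)-x^k=-\widetilde{\gamma}_k\mathcal{R}_{\widetilde{\gamma}_k}(x^k)$ together with the uniform bound $\widetilde{\gamma}_k\le \widetilde{L}^{-1}$ from Lemma \ref{lemma-bounded} (ii), and $\beta^{l_k}\in(0,1]$, the triangle inequality gives
\[
\|y^{k+1}-x^k\|\le \|d^k\|+\widetilde{L}^{-1}\|\mathcal{R}_{\widetilde{\gamma}_k}(x^k)\|.
\]
Both quantities on the right tend to zero by part (i), so $\|y^{k+1}-x^k\|\to 0$, and combining with $\|x^{k+1}-y^{k+1}\|\to 0$ yields (ii).

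There is no genuine obstacle here; the whole argument is a bookkeeping exercise once Lemma \ref{lemma-bounded} is in place. The only subtlety to watch is that the bound $\|d^k\|\le \varsigma_k\|\mathcal{R}_{\widetilde{\gamma}_k}(x^k)\|^{\varrho}$ is merely a feasibility constraint of the subproblem in step (2b), not a consequence of optimality, so it holds for every choice of $d^k$ realising the minimum; and that the telescoping sum requires the scalar $\overline{c}_1$ of Lemma \ref{lemma-bounded} (iii) to be \emph{independent of }$k$, which is exactly what that lemma asserts via the uniform lower bound $\gamma_{\min}$ on $\{\widetilde{\gamma}_k\}$.
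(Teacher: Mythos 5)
Your proposal is correct and follows essentially the same route as the paper: part (i) comes from the descent estimate in Lemma \ref{lemma-bounded} (iii) together with the convergence of $\{F(x^k)\}$ (your telescoping argument and the paper's direct use of $\|\mathcal{R}_{\widetilde{\gamma}_k}(x^k)\|^2\le \overline{c}_1^{-1}(F(x^k)-F(x^{k+1}))\to 0$ are interchangeable) and the feasibility bound on $d^k$ from step (2b), while part (ii) uses exactly the decomposition $\|y^{k+1}-x^k\|\le\|d^k\|+\widetilde{\gamma}_k\|\mathcal{R}_{\widetilde{\gamma}_k}(x^k)\|$ with $\widetilde{\gamma}_k\le\widetilde{L}^{-1}$. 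No gaps.
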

\begin{proof}
(i)  From Lemma \ref{lemma-bounded} (iii),
\(
 \|\mathcal{R}_{\widetilde{\gamma}_k}(x^{k})\|^2 \le\overline{c}_1^{-1}(F(x^{k}) - F(x^{k+1})).
\)
By observing that $\{F(x^k)\}_{k\in\mathbb{N}}$ is a convergent sequence from Proposition \ref{prop-descent}, we have that $\lim_{k\rightarrow \infty} \|\mathcal{R}_{\widetilde{\gamma}_k}(x^{k})\| = 0$. From step (2b), 
$ \|d^k\|\leq \varsigma_k\|\mathcal{R}_{\widetilde{\gamma}_k}(x^k)\|^{\varrho} \leq  \overline{\varsigma}\|\mathcal{R}_{\widetilde{\gamma}_k}(x^k)\|^{\varrho}, $
which by $\lim_{k\rightarrow \infty} \|\mathcal{R}_{\widetilde{\gamma}_k}(x^k)\|= 0$ and $\varrho\in (0,1)$ implies that $\lim_{k\rightarrow\infty} \|d^k\| = 0$. 
        
\noindent
(ii) For every $k\in\mathbb{N}$,
$\|x^{k+1} - x^k\| \leq \|x^{k+1} - y^{k+1} \| + \|y^{k+1} -x^k\|\leq \|x^{k+1} - y^{k+1} \| + \|d^k\| + \widetilde{\gamma}_k \|\mathcal{R}_{\widetilde{\gamma}_k}(x^k)\|$, which by part (i), Proposition \ref{prop-descent} and Lemma \ref{lemma-bounded} (ii) implies that $\lim_{k\rightarrow \infty}\|x^{k+1} - x^k\|= 0$. 
\end{proof} 
 \section{Global convergence}\label{sec4}
 
 Based on the results established in the previous section, we are ready to establish the subsequence convergence of $\{x^k\}_{k\in\mathbb{N}}$, i.e., to prove that its every cluster point is an $L$-stationary point of \eqref{model}. We denote by $\omega(x^0)$ the set of cluster points of $\{x^k\}_{k\in\mathbb{N}}$ starting from $x^0$.
\begin{proposition}\label{prop-supp}
 Under Assumptions \ref{ass-g}-\ref{ass-bounded}, the following results hold. 
 \begin{itemize} 
 \item[{\rm (i)}] For every $x^*\in\omega(x^0)$, it holds that  $0\in \mathcal{R}_{\gamma}(x^*)$ for some $\gamma \in [\frac{1}{L+\alpha},\overline{\gamma}]$.

 \item[{\rm (ii)}] For every $x^*\in\omega(x^0)$, there exists $\epsilon_{x^*} > 0$ such that $\mathbb{B}(x^*,\epsilon_{x^*})\subset\mathcal{O}$ and 
 $\mathcal{R}_{\widetilde{L}^{-1}}$ is locally Lipschitz continuous on $\mathbb{B}(x^*,\epsilon_{x^*})$.

 \item[{\rm (iii)}] There exists $\overline{k}\in\mathbb{N}$ such that $\widetilde{\gamma}_k = \widetilde{L}^{-1}$ for $k>\overline{k}$. 

 \item[{\rm (iv)}] For every $x^*\in\omega(x^0)$, it holds that $F(x^*)\!=\!\overline{F}\!=\!F_{\widetilde{L}^{-1}}(x^*)\!=\!\lim_{k\to\infty}F_{\widetilde{L}^{-1}}(x^k)$. 
\end{itemize}
\end{proposition}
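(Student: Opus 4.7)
The strategy is to prove the four parts in order; the conceptually straightforward part (i) furnishes the limit‑passage tools needed for the more delicate pair (ii)--(iii), after which (iv) follows from a sandwich argument.

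\emph{Part (i).} I would fix $x^*\in\omega(x^0)$ together with a subsequence $x^{k_j}\to x^*$. Since Proposition~\ref{prop-descent} gives $\|x^k-y^k\|\to 0$, it follows that $y^{k_j}\to x^*$. The scalars $\gamma_{k_j}$ lie in the compact interval $[\tfrac{1}{L+\alpha},\overline{\gamma}]$, so after extracting a further subsequence, $\gamma_{k_j}\to\gamma^*\in[\tfrac{1}{L+\alpha},\overline{\gamma}]$. From $x^{k_j}\in\mathcal{T}_{\gamma_{k_j}}(y^{k_j})$, one gets the variational inequality
\[
g(x^{k_j})+\tfrac{1}{2\gamma_{k_j}}\|x^{k_j}-y^{k_j}+\gamma_{k_j}\nabla\!f(y^{k_j})\|^2\le g(z)+\tfrac{1}{2\gamma_{k_j}}\|z-y^{k_j}+\gamma_{k_j}\nabla\!f(y^{k_j})\|^2
\]
for every $z\in\mathbb{R}^n$. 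Choosing $z=x^*$ and taking $\limsup$ on both sides (using continuity of $f,\nabla\!f$ and $y^{k_j}\to x^*$, $\|x^{k_j}-y^{k_j}\|\to 0$) yields $\limsup_j g(x^{k_j})\le g(x^*)$, which combined with the lsc of $g$ gives $g(x^{k_j})\to g(x^*)$. Passing to the limit in the same inequality with an arbitrary fixed $z$ shows that $x^*$ minimizes $g(\cdot)+\tfrac{1}{2\gamma^*}\|\cdot-x^*+\gamma^*\nabla\!f(x^*)\|^2$, i.e.\ $x^*\in\mathcal{T}_{\gamma^*}(x^*)$, so that $0\in\mathcal{R}_{\gamma^*}(x^*)$.

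\emph{Parts (ii) and (iii).} Under Assumption~\ref{ass-prox-regular}, $g$ is prox-regular at $x^*$, so Proposition~\ref{Forder-Fgam} applied at $x^*$ guarantees that for all $\gamma>0$ sufficiently small, $\mathcal{T}_\gamma$ is locally single-valued and Lipschitz continuous around $x^*$. Since the algorithm's parameters are calibrated so that $\widetilde{L}^{-1}=1/(L+2\alpha)$ lies inside that small-$\gamma$ range, there is some $\epsilon_{x^*}>0$ with $\mathbb{B}(x^*,\epsilon_{x^*})\subset\mathcal{O}$ on which $\mathcal{T}_{\widetilde{L}^{-1}}$, and hence $\mathcal{R}_{\widetilde{L}^{-1}}$, is locally single-valued and Lipschitz, giving (ii). For (iii), boundedness of $\{x^k\}$ together with $\|x^{k+1}-x^k\|\to0$ (Proposition~\ref{prop-dkzero}(ii)) implies that $\omega(x^0)$ is nonempty and compact and that $\mathrm{dist}(x^k,\omega(x^0))\to 0$. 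The family $\{\mathbb{B}^{\circ}(x^*,\epsilon_{x^*}/2)\}_{x^*\in\omega(x^0)}$ is an open cover of the compact set $\omega(x^0)$; extracting a finite subcover and taking $\delta$ to be the minimum of the associated half-radii, one finds a uniform $\delta>0$ such that the $\delta$-neighborhood of $\omega(x^0)$ is contained in the union of balls on which $\mathcal{T}_{\widetilde{L}^{-1}}$ is locally single-valued and Lipschitz. For all $k$ larger than some $\overline{k}$, $x^k$ lies in this neighborhood, so the minimality of $m_k$ in step (2a) forces $m_k=0$, i.e.\ $\widetilde{\gamma}_k=\widetilde{L}^{-1}$.

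\emph{Part (iv).} The equality $F(x^{k_j})\to F(x^*)$ is immediate from continuity of $f$ and the identity $g(x^{k_j})\to g(x^*)$ established in part~(i); combined with $F(x^k)\to\overline{F}$ from Proposition~\ref{prop-descent}, one gets $F(x^*)=\overline{F}$. For the envelope identity, (iii) gives $\widetilde{\gamma}_k=\widetilde{L}^{-1}$ for all $k>\overline{k}$, so the descent chain used in the proof of Lemma~\ref{lemma-bounded}(iii) specializes to $F(x^{k+1})\le F_{\widetilde{L}^{-1}}(x^k)\le F(x^k)$, the right-hand inequality being Lemma~\ref{lemma-PGdescent}(i). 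A sandwich with $F(x^k),F(x^{k+1})\to\overline{F}$ forces $F_{\widetilde{L}^{-1}}(x^k)\to\overline{F}$. Since $\widetilde{L}^{-1}<1/L\le\gamma_g$, the function $F_{\widetilde{L}^{-1}}$ is continuous on $\mathcal{O}$ (see the discussion preceding Proposition~\ref{Forder-Fgam}), so $F_{\widetilde{L}^{-1}}(x^{k_j})\to F_{\widetilde{L}^{-1}}(x^*)$, completing the identity.

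\emph{Main obstacle.} The delicate step is part~(ii): certifying that $\widetilde{L}^{-1}$ actually falls into the range of $\gamma$ for which Proposition~\ref{prop-proxmap} (and hence Proposition~\ref{Forder-Fgam}) produces local single-valuedness and Lipschitz continuity of $\mathcal{T}_\gamma$ around $x^*$. This requires tying the user-chosen parameter $\alpha$, which controls $\widetilde{L}=L+2\alpha$, to the prox-regularity modulus of $g$ at each cluster point, and it is precisely this calibration that drives the algorithm's behavior in the semismooth Newton step.
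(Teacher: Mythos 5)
Your parts (i), (iii) and (iv) are essentially correct and follow the paper's route: (i) is the limit passage that the paper delegates to \cite[Theorem 1.25]{RW09} (you spell out the variational-inequality argument, which also yields $g(x^{k_j})\to g(x^*)$ and is reused in your (iv)); (iii) is the same compactness/finite-subcover argument as the paper's contradiction argument; (iv) is a sandwich equivalent to the paper's two-sided inequality.

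The genuine gap is in part (ii), and you have correctly identified it yourself in your ``main obstacle'' paragraph --- but you do not close it. Proposition \ref{Forder-Fgam} only gives single-valuedness and Lipschitz continuity of $\mathcal{T}_\gamma$ near $x^*$ for all $\gamma$ below some threshold $\widehat\gamma$ determined by the prox-regularity modulus $\rho$ and the constant $\varepsilon$ of $g$ at $x^*$ (via Lemma \ref{uniform} and Proposition \ref{prop-proxmap}). Those quantities are unrelated to $L$ and $\alpha$, and the algorithm's user-chosen inputs cannot be ``calibrated'' to an unknown modulus, so there is no reason that $\widetilde{L}^{-1}=1/(L+2\alpha)$ lies below $\widehat\gamma$; asserting that it does is not a proof. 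The paper closes this step by a different mechanism: part (i) shows that $x^*$ is a fixed point of $\mathcal{T}_\gamma$ for some $\gamma\ge\frac{1}{L+\alpha}$, i.e., the quantity $\Gamma^{f,g}(x^*)$ of \cite[Definition 3.2]{Themelis18} is at least $\frac{1}{L+\alpha}$, and then \cite[Theorem 4.7]{Themelis18} yields that for any stepsize strictly below this threshold --- in particular $\widetilde{L}^{-1}<\frac{1}{L+\alpha}$ --- the map $\mathcal{R}_{\widetilde{L}^{-1}}$ is single-valued and locally Lipschitz on a whole neighborhood of $x^*$. The strict gap between $\frac{1}{L+\alpha}$ and $\frac{1}{L+2\alpha}$ built into the algorithm is exactly what this argument exploits; the prox-regularity modulus never enters. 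Since your part (iii) (and through it the envelope identity in (iv)) rests on (ii), the gap propagates to those parts as well.
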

\begin{proof}
  (i) Pick any $x^*\in\omega(x^0)$. There exists $K \subset \mathbb{N}$ such that $\lim_{K\ni k\rightarrow \infty} x^k = x^*$. Then, it follows from Proposition \ref{prop-descent} that $\lim_{K\ni k\rightarrow \infty} y^k = x^*$. Note that $x^k \in \mathcal{T}_{\gamma_k}(y^k)$ for some $\gamma_k \in [\frac{1}{L+\alpha},\overline{\gamma}]$. If necessary taking a subsequence, we can assume that $\lim_{K\ni k\rightarrow \infty} \gamma_k = \gamma\in [\frac{1}{L+\alpha},\overline{\gamma}]$. The desired result follows from \cite[Theorem 1.25]{RW09}. 
  
  \noindent
  (ii) By part (i), for each $x^*\in \omega(x^0)$, there exists $\gamma \in [\frac{1}{L+\alpha},\overline{\gamma}]$ such that $x^* \in \mathcal{T}_{\gamma}(x^*)$, which means that the function $\Gamma^{f,g}$ in \cite[Definition 3.2]{Themelis18} satisfies $\Gamma^{f,g}(x^*)\ge\frac{1}{L+\alpha}$. Together with \cite[Theorem 4.7]{Themelis18} and $\widetilde{L}^{-1} < \frac{1}{L+\alpha}$, there exists $\epsilon_{x^*} > 0$ such that $\mathbb{B}(x^*,\epsilon_{x^*})\subset\mathcal{O}$, on which $\mathcal{R}_{\widetilde{L}^{-1}}$ is single-valued and locally Lipschitz continuous.

  \noindent
  (iii) Let $\Gamma\!:=\bigcup_{x^*\in \omega(x^0)}\mathbb{B}(x^*,\epsilon_{x^*})$, where $\epsilon_{x^*}$ is the same as in (ii). We claim that $x^k\in\Gamma$ for $k>\overline{k}$ (if necessary by enlarging $\overline{k}$). If not, there will exist a subsequence $\{x^{k_j}\}_{j\in\mathbb{N}}$ such that ${\rm dist}(x^{k_j}, \Gamma) >1/j$ for all $j$. As $\{x^{k_j}\}_{j\in\mathbb{N}}$ is bounded by Assumption \ref{ass-bounded}, it must have a cluster point $\overline{x}^*\in \omega(x^0)$, so there exists $\overline{j}\in \mathbb{N}$ such that for $j>\overline{j}$, $x^{k_j} \in \mathbb{B}(\overline{x}^*,\epsilon_{\overline{x}^*})\subset\Gamma$, a contradiction to ${\rm dist}(x^{k_j}, \Gamma) >1/j$ for all $j$. Thus, there exists $\overline{k}\in\mathbb{N}$ such that $x^k\in\Gamma$ for all $k>\overline{k}$. By part (ii), $\mathcal{R}_{\widetilde{L}^{-1}}$ is single-valued and locally Lipschitz continuous on $\Gamma$, so is the mapping $\mathcal{T}_{\widetilde{L}^{-1}}$. This means that for all $k>\overline{k}$ step (2a) of Algorithm \ref{hybrid} holds with $m=0$. Consequently, $\widetilde{\gamma}_k = \widetilde{L}^{-1}$ for all $k>\overline{k}$.

  \noindent
  (iv) Pick any $x^*\in\omega(x^0)$. There exists $K \subset \mathbb{N}$ such that $\lim_{K\ni k\rightarrow \infty} x^k = x^*$. Along with Proposition \ref{prop-descent}, $\lim_{K\ni k\rightarrow \infty} x^{k+1}=x^*=\lim_{K\ni k\rightarrow \infty} y^{k+1}$. From Proposition \ref{prop-descent} and the lsc of $F$, we have $\overline{F}\ge F(x^*)\ge F_{\widetilde{L}^{-1}}(x^*)$, where the second inequality is by Lemma \ref{lemma-PGdescent}.  
  On the other hand, from Lemma \ref{lemma-bounded} (iii) and part (iii), the sequence $\{F_{\widetilde{L}^{-1}}(x^k)\}_{k>\overline{k}}$ is descent, and is also lower bounded because $F(x^{k+1})\leq F_{\widetilde{L}^{-1}}(x^k)$ for any $k>\overline{k}$ by \eqref{descent-K21} and part (iii). Hence, the sequence $\{F_{\widetilde{L}^{-1}}(x^k)\}_{k>\overline{k}}$ is convergent, and passing $K\ni k\to\infty$ to the inequality $F(x^{k+1})\leq F_{\widetilde{L}^{-1}}(x^k)$ leads to $\lim_{k\to\infty}F_{\widetilde{L}^{-1}}(x^k)=F_{\widetilde{L}^{-1}}(x^*)\ge\overline{F}$, where the equality follows by the continuity of $F_{\widetilde{L}^{-1}}(\cdot)$\cite[Theorem 1.25]{RW09}. The two sides show that the desired result holds. 
  \end{proof}
 
 Proposition \ref{prop-supp} (iii) implies that for $k>\overline{k}$, the semismooth Newton step of Algorithm \ref{hybrid} always solves $0\in\mathcal{R}_{\widetilde{L}^{-1}}(x)$. Next, we focus on the convergence of $\{x^k\}_{k\in\mathbb{N}}$ under the KL property of $F$ with exponent. To this end, write 
 \begin{equation}\label{Fstar}
   F_*\!:= F_{\widetilde{L}^{-1}},\, \mathcal{T}_*\!:=\mathcal{T}_{\widetilde{L}^{-1}},\, \mathcal{R}_*\!:=\mathcal{R}_{\widetilde{L}^{-1}}\ \ {\rm and}\ \mathcal{X}^*\!:=\big\{x\in\mathbb{R}^n\,|\, 0\in \mathcal{R}_{*}(x)\big\}.
 \end{equation}
 By Definition \ref{def-Lspoint} and Proposition \ref{prop-supp} (iii), $ \mathcal{X}^*=\!\big\{x\in\mathbb{R}^n\,|\, x\in \mathcal{T}_{*}(x)\big\}\ne\emptyset$.
\begin{theorem}\label{thm-gconvergence}
 Suppose that $F$ is a KL function of exponent $\theta\in [\frac{1}{2}, \frac{1}{2-\varrho}]$. Then under Assumptions \ref{ass-g}-\ref{ass-bounded}, it holds that $\sum_{k=\overline{k}+1}^{\infty}\|x^{k+1}-x^k\|<\infty$ and $\sum_{k=\overline{k}+1}^{\infty}\|y^{k+1}-y^k\|<\infty$, where $\overline{k}$ is the one in Proposition \ref{prop-supp} (iii), and consequently, $\{x^k\}_{k\in\mathbb{N}}$ and $\{y^k\}_{k\in\mathbb{N}}$ are convergent and converge to the same $x^*\in\mathcal{X}^*$.
 \end{theorem}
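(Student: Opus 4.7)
The plan is to prove Cauchyness of $\{x^k\}_{k\in\mathbb{N}}$ via a KL–based summability argument and then transfer it to $\{y^k\}_{k\in\mathbb{N}}$ by Proposition \ref{prop-descent}. Throughout I restrict attention to indices $k>\overline{k}$ from Proposition \ref{prop-supp} (iii), so that $\widetilde{\gamma}_k=\widetilde{L}^{-1}$ is constant and I may work with the fixed maps $\mathcal{T}_*,F_*,\mathcal{R}_*$ introduced in \eqref{Fstar}. Writing $\Delta_k:=F(x^k)-\overline{F}$, Propositions \ref{prop-descent} and \ref{prop-supp} (iv) give $\Delta_k\downarrow 0$, and Lemma \ref{lemma-bounded} (iii) supplies the sufficient decrease $\Delta_k-\Delta_{k+1}\ge\overline{c}_1\|\mathcal{R}_*(x^k)\|^2$. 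Chaining \eqref{FBEcomp}, step (2c) and Lemma \ref{lemma-PGdescent} (i) yields $F_{\gamma_{k+1}}(y^{k+1})\le F(x^k)$, which combined with the PG descent in step (1a) produces $\|y^{k+1}-x^{k+1}\|^2\le(2/\alpha)(\Delta_k-\Delta_{k+1})$. From the inclusion $x^{k+1}\in\mathcal{T}_{\gamma_{k+1}}(y^{k+1})$ and Assumption \ref{ass-g} (i), a standard optimality calculation gives ${\rm dist}(0,\partial F(x^{k+1}))\le(\gamma_{k+1}^{-1}+L)\|y^{k+1}-x^{k+1}\|$. Plugging these estimates into the KL inequality of $F$ at each $x^*\in\omega(x^0)$, propagated to a neighborhood of the compact set $\omega(x^0)$ by the usual uniformization (since $F\equiv\overline{F}$ on $\omega(x^0)$ by Proposition \ref{prop-supp} (iv)), yields the recursion $\Delta_{k+1}^{2\theta}\le C_0(\Delta_k-\Delta_{k+1})$ for all sufficiently large $k$.

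Next I control the iterate displacement. From $y^{k+1}=\beta^{l_k}(x^k+d^k)+(1-\beta^{l_k})\mathcal{T}_*(x^k)$ and step (2b), $\|y^{k+1}-x^k\|\le\|d^k\|+\widetilde{L}^{-1}\|\mathcal{R}_*(x^k)\|\le\overline{\varsigma}\|\mathcal{R}_*(x^k)\|^{\varrho}+\widetilde{L}^{-1}\|\mathcal{R}_*(x^k)\|$, so for large $k$ (when $\|\mathcal{R}_*(x^k)\|<1$) I get $\|y^{k+1}-x^k\|\le C_3\|\mathcal{R}_*(x^k)\|^{\varrho}\le C_3\overline{c}_1^{-\varrho/2}(\Delta_k-\Delta_{k+1})^{\varrho/2}$. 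Together with the PG-step bound of the previous paragraph,
\[
\|x^{k+1}-x^k\|\le\|x^{k+1}-y^{k+1}\|+\|y^{k+1}-x^k\|\le C_1(\Delta_k-\Delta_{k+1})^{1/2}+C_2(\Delta_k-\Delta_{k+1})^{\varrho/2}.
\]
The first sum is finite by the classical Attouch--Bolte argument applied to $\varphi(s)=cs^{1-\theta}$: concavity gives $\varphi(\Delta_k)-\varphi(\Delta_{k+1})\ge c(1-\theta)\Delta_k^{-\theta}(\Delta_k-\Delta_{k+1})$, and combining with the recursion $\Delta_{k+1}^{2\theta}\le C_0(\Delta_k-\Delta_{k+1})$ yields $(\Delta_k-\Delta_{k+1})^{1/2}\le C[\varphi(\Delta_{k-1})-\varphi(\Delta_k)]+\frac{1}{4}(\Delta_{k-1}-\Delta_k)^{1/2}$ in the usual two-step form, which telescopes. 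For the second sum I use a weighted Young's inequality
\[
(\Delta_k-\Delta_{k+1})^{\varrho/2}\le\tfrac{\varrho}{2}\Delta_k^{-\theta}(\Delta_k-\Delta_{k+1})+\bigl(1-\tfrac{\varrho}{2}\bigr)\Delta_k^{\theta\varrho/(2-\varrho)},
\]
so that the first term on the right telescopes against $\varphi(\Delta_k)-\varphi(\Delta_{k+1})$, and the residual $\sum_k\Delta_k^{\theta\varrho/(2-\varrho)}$ is handled by the KL decay rate (geometric when $\theta=1/2$, and $\Delta_k=O(k^{-1/(2\theta-1)})$ when $\theta>1/2$). The exponent constraint $\theta\le 1/(2-\varrho)$ enters precisely here: it guarantees that $\theta(2-\varrho)\le 1$, which in turn ensures $\Delta_k^{\theta\varrho/(2-\varrho)}$ decays fast enough along the KL rate for its series to converge.

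Once $\sum_{k>\overline{k}}\|x^{k+1}-x^k\|<\infty$ is obtained, $\{x^k\}_{k\in\mathbb{N}}$ is Cauchy and converges to some $x^*\in\omega(x^0)\subset\mathcal{X}^*$ by Proposition \ref{prop-supp} (i). Combining Proposition \ref{prop-descent} (which gives $\|x^k-y^k\|\to 0$) with the triangle inequality $\|y^{k+1}-y^k\|\le\|y^{k+1}-x^{k+1}\|+\|x^{k+1}-x^k\|+\|x^k-y^k\|$ and $\|x^k-y^k\|^2\le(2/\alpha)(\Delta_{k-1}-\Delta_k)$ then yields $\sum\|y^{k+1}-y^k\|<\infty$, so $\{y^k\}_{k\in\mathbb{N}}$ also converges to $x^*$. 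The main obstacle I anticipate is the summability of the $(\Delta_k-\Delta_{k+1})^{\varrho/2}$ term, particularly near the boundary $\theta=1/(2-\varrho)$ where the Young's-inequality exponents become critical; balancing the standard desingularizing function against the $\varrho$-weighted Newton-step bound is what pins down the admissible KL range. A secondary point of care is applying the local KL property uniformly on a neighborhood of $\omega(x^0)$ when this set is not a priori a singleton, which proceeds via the standard covering argument using continuity of $F$ (and hence of $\Delta_k$) along the iterates guaranteed by Proposition \ref{prop-supp} (iv).
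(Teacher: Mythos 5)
Your overall architecture mirrors the paper's: a KL-plus-sufficient-decrease argument to sum $\|x^{k+1}-y^{k+1}\|\lesssim(\Delta_k-\Delta_{k+1})^{1/2}$ (this part is fine and is essentially the paper's Step~1 with the AM--GM telescoping), followed by a separate summability argument for the Newton-step displacement, which is of order $\|\mathcal{R}_*(x^k)\|^{\varrho}$. The gap is in the second half. You bound $\|\mathcal{R}_*(x^k)\|^{\varrho}\le \overline{c}_1^{-\varrho/2}(\Delta_k-\Delta_{k+1})^{\varrho/2}$ and then try to sum $(\Delta_k-\Delta_{k+1})^{\varrho/2}$ by the weighted Young split
\[
(\Delta_k-\Delta_{k+1})^{\varrho/2}\le\tfrac{\varrho}{2}\Delta_k^{-\theta}(\Delta_k-\Delta_{k+1})+\bigl(1-\tfrac{\varrho}{2}\bigr)\Delta_k^{\theta\varrho/(2-\varrho)},
\]
telescoping the first term against $\varphi(\Delta_k)-\varphi(\Delta_{k+1})$ and invoking the KL decay rate $\Delta_k=O(k^{-1/(2\theta-1)})$ for the residual. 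This residual series does \emph{not} converge on the whole claimed range of $\theta$. With $\Delta_k\asymp k^{-1/(2\theta-1)}$, summability of $\sum_k\Delta_k^{\theta\varrho/(2-\varrho)}$ requires $\frac{\theta\varrho}{(2\theta-1)(2-\varrho)}>1$, which is equivalent to $\theta<\frac{2-\varrho}{4-3\varrho}$, and one checks that $\frac{2-\varrho}{4-3\varrho}<\frac{1}{2-\varrho}$ for every $\varrho\in(0,1)$. At the endpoint $\theta=\frac{1}{2-\varrho}$ the exponent of $k^{-1}$ in the residual is exactly $\theta<1$, so the series diverges; e.g.\ for $\varrho=1/2$ your argument covers only $\theta<3/5$ while the theorem asserts the result up to $\theta=2/3$. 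Your stated justification --- that $\theta(2-\varrho)\le 1$ guarantees the residual decays fast enough --- is therefore not correct; that inequality plays a different role.

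The device that closes this gap in the paper is to change the desingularizer rather than to pay a Young-type penalty against the original $\varphi(t)=ct^{1-\theta}$: one raises the KL inequality for $F_*$ (whose exponent is inherited from $F$ via \cite[Remark 5.1(ii)]{yu2022kurdyka}) to the power $2-\varrho$, obtaining $\widetilde{\phi}'(F_*(x^k)-\overline{F})[{\rm dist}(0,\partial F_*(x^k))]^{2-\varrho}\ge1$ with $\widetilde{\phi}(t)\propto t^{1-\theta(2-\varrho)}$, which is concave precisely because $\theta(2-\varrho)\le1$ --- this is where that constraint actually enters. Since ${\rm dist}(0,\partial F_*(x^k))=\|Q_{\widetilde{\gamma}_k}(x^k)\mathcal{R}_*(x^k)\|\le2\|\mathcal{R}_*(x^k)\|$ and $F_*(x^k)-F_*(x^{k+1})\ge\overline{c}_1\|\mathcal{R}_*(x^k)\|^2$ are both expressed in the \emph{same} quantity $\|\mathcal{R}_*(x^k)\|$, the concavity of $\widetilde{\phi}$ gives $\widetilde{\Delta}_k-\widetilde{\Delta}_{k+1}\ge c\,\|\mathcal{R}_*(x^k)\|^{2-(2-\varrho)}=c\,\|\mathcal{R}_*(x^k)\|^{\varrho}$ directly, and summability follows by telescoping with no rate estimate at all. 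Your detour through $(\Delta_k-\Delta_{k+1})^{\varrho/2}$ discards this exact cancellation, and the loss is fatal near the upper end of the exponent range; you would need to replace the Young-inequality step by this change of desingularizer (or an equivalent trick) to recover the full statement.
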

 \begin{proof}
 If there exists $\widetilde{k}\in\mathbb{N}$ such that $F(x^{\widetilde{k}}) = F(x^{\widetilde{k}+1})$, by Proposition \ref{prop-descent}, we have $x^{\widetilde{k}} = y^{\widetilde{k}}$, so $x^{\widetilde{k}}$ meets the stopping criterion. The conclusion follows. Next we assume that $F(x^k) \neq F(x^{k+1})$ for all $k\in\mathbb{N}$, and proceed the proof by two steps.

 \noindent
 {\bf Step 1}: To prove that $\sum_{k=1}^{\infty}\|x^k- y^k\|<\infty$. 
 From Algorithm \ref{hybrid}, for all $k\in\mathbb{N}$, $x^k\in\mathcal{T}_{\gamma_k}(y^k)=\mathcal{P}_{\gamma_k}g(y^k-\!\gamma_k\nabla\!f(y^k))$, which implies that $0\in \nabla\!f(y^k) + \gamma_k^{-1}(x^k- y^k) + \partial g(x^k)$ or equivalently $\nabla\!f(x^k)-\nabla\!f(y^k)-\gamma_k^{-1}(x^k- y^k) \in \partial F(x^k)$. Since $\gamma_k \geq \frac{1}{L+\alpha}$ by step (1a),  
 together with the Lipschitz continuity of $\nabla\!f$ on $\mathcal{O}$, we have for all $k>\overline{k}$, 
  \begin{equation}\label{subdiff-gap}
  {\rm dist}(0,\partial F(x^k))\leq  (L+\gamma_k^{-1})\|x^k - y^k\| \leq  (L+L+\alpha)\|x^k-y^k\| \leq 2\widetilde{L}\|x^k-y^k\|.
  \end{equation}
  Recall that $\omega(x^0)$ is nonempty and compact. Also, from Proposition \ref{prop-supp} (iv), $F(x)=\overline{F}$ for all $x\in\omega(x^0)$. As $F$ is a KL function, from \cite[Lemma 6]{Bolte14}, there exist $\varepsilon>0,\eta>0$ and $\phi\in\Upsilon_{\!\eta}$ such that for all $x\in[\overline{F}<F<\overline{F}+\eta]\cap\big\{x\in\mathbb{R}^n\ |\ {\rm dist}(x,\omega(x^0))\le\varepsilon\big\}$, 
 \[
  \phi'(F(x)-\overline{F}) {\rm dist}(0,\partial F(x)) \geq 1.
 \]
 By Proposition \ref{prop-descent}, $\{F(x^k)\}_{k\in\mathbb{N}}$ is a descent sequence converging to $\overline{F}$. Then, from the above equation, for all $k>\overline{k}$ (if necessary by enlarging $\overline{k}$), 
 \begin{equation}\label{KL-inequality1}
  \phi'(F(x^k)-\overline{F}) {\rm dist}(0,\partial F(x^k)) \geq 1.
 \end{equation}
 For each $k\in\mathbb{N}$, write $\Delta_{k}\!:= \phi(F(x^{k})-\overline{F})$. From the concavity of $\phi$, it holds that
 \begin{equation}\label{KL-inequality2}
  \Delta_{k}-\Delta_{k+1}\geq\phi'(F(x^k) \!-\!\overline{F})(F(x^k)\!-\! F(x^{k+1}))\quad {\rm for\ each}\ k\in\mathbb{N}.
 \end{equation}  
 Combining equations \eqref{subdiff-gap}-\eqref{KL-inequality2} with Proposition \ref{prop-descent} yields that for all $k>\overline{k}$, 
 \[
 \Delta_{k} - \Delta_{k+1} \geq \frac{F(x^k)- F(x^{k+1})}{{\rm dist}(0,\partial F(x^k))}  \geq \frac{\alpha}{4\widetilde{L}} \frac{\|x^{k+1}- y^{k+1}\|^2}{\|x^k-y^k\|}.
 \]
 Set $\widetilde{\alpha}:={4\widetilde{L}}/{\alpha}$. Using $2\sqrt{ab}\le a+b$ for $a=\widetilde{\alpha} (\Delta_k\!-\!\Delta_{k+1})$ and $b=\|x^k-y^k\|$ leads to
 \[
  \|x^{k+1}\!-\!y^{k+1}\|\le\sqrt{\widetilde{\alpha}(\Delta_k\!-\! \Delta_{k+1})\|x^k-y^k\|} \leq \frac{1}{2}\big[\widetilde{\alpha}(\Delta_k\!-\!\Delta_{k+1})+\|x^k\!-\!y^k\|\big]
 \]
 for all $k>\overline{k}$. 
 Summing this inequality from $k=\overline{k}+1$ to any $l>\overline{k}+1$ yields that
 \begin{equation*}
 \sum_{k=\overline{k}+1}^{l}\|x^{k+1}-y^{k+1}\| \leq \|x^{\overline{k}+1} - y^{\overline{k}+1}\| + \widetilde{\alpha}\sum_{k=\overline{k}+1}^{l}\!\!(\Delta_k-\Delta_{k+1})\leq \|x^{\overline{k}+1} - y^{\overline{k}+1}\| + \widetilde{\alpha}\Delta_{\overline{k}+1},
 \end{equation*} 
 where the last inequality is due to $\Delta_{k}\ge 0$. Passing the limit $l\to\infty$ yields the result.

 \noindent
 {\bf Step 2}: To prove that $\sum_{k=\overline{k}+1}^{\infty}\|\mathcal{R}_{*}(x^k)\|^{\varrho}<\infty$. By Proposition \ref{prop-supp} (iii) and \eqref{Fstar}, for all $k>\overline{k}$, $  F_{\widetilde{\gamma}_k} \equiv F_*$. From Proposition \ref{prop-supp} (iv) and its proof, $\{F_*(x^k)\}_{k>\overline{k}}$ is a descent sequence converging to $\overline{F}$. From the given assumption on $F$ and \cite[Remark 5.1 (ii)]{yu2022kurdyka}, we know that $F_*$ is a KL function of exponent $\theta\in[\frac{1}{2},\frac{1}{2-\varrho}]$. By following the arguments similar to those of Step 1, for $k>\overline{k}$ (if necessary by increasking $\overline{k}$),
 \begin{equation*}
 \phi'(F_*(x^k)-\overline{F}){\rm dist}(0,\partial F_*(x^k))\geq 1, \ {\rm with} \ \phi(t) = ct^{1-\theta} \ {\rm and} \ c>0. 
 \end{equation*}
 Raising both sides of the equation to the power of $2-\varrho$ yields
 \begin{align*}
  &(F_*(x^k)-\overline{F})^{-\theta(2-\varrho)}[{\rm dist}(0,\partial F_*(x^k))]^{2-\varrho}\ge [c(1-\theta)]^{\varrho-2}\\
  \Leftrightarrow\ &\widetilde{\phi}'(F_*(x^k)-\overline{F})[{\rm dist}(0,\partial F_*(x^k))]^{2-\varrho}\ge 1 \ {\rm with} \ \widetilde{\phi}(t)\!:=\frac{t^{1-\theta(2-\varrho)}}{[c(1-\theta)]^{\varrho-2}[1-\theta(2-\varrho)]}.
 \end{align*}
 For each $k>\overline{k}$, let $\widetilde{\Delta}_{k}\!:= \widetilde{\phi}(F_{*}(x^k) - \overline{F})$. Note that the function $\widetilde{\phi}$ is concave on $(0,\infty)$ due to $\theta\in [\frac{1}{2},\frac{1}{2-\varrho}]$. Then, for $k>\overline{k}$,  \begin{equation*}
  \widetilde{\Delta}_{k}-\widetilde{\Delta}_{k+1}
  \geq \widetilde{\phi}'(F_{*}(x^k)- \overline{F}) (F_{*}(x^k)- F_{*}(x^{k+1})) \geq \frac{F_*(x^{k}) - F_*(x^{k+1})}{[{\rm dist}(0,\partial F_{*}(x^k))]^{2-\varrho}}.
 \end{equation*}  
 By Proposition \ref{prop-supp} (iii), $\mathcal{R}_*$ is single-valued and locally Lipschitz continuous at every $x^k$ for $k\!>\!\overline{k}$. Along with Proposition \ref{Forder-Fgam}, for $k\!>\!\overline{k}$, $\partial F_{*}(x^k)\!=\! \{Q_{\widetilde{\gamma}_k}(x^k)\mathcal{R}_*(x^k)\}$. From the above inequality and Lemma \ref{lemma-bounded} (iii), for $k\!>\!\overline{k}$, 
  \begin{equation*}
  \widetilde{\Delta}_{k}-\widetilde{\Delta}_{k+1}\geq \frac{F_*(x^{k}) - F_*(x^{k+1})}{\|Q_{\widetilde{\gamma}_k}(x^k)\mathcal{R}_*(x^k)\|^{2-\varrho}} \geq\frac{\overline{c}_1\|\mathcal{R}_{*}(x^k)\|^2}{\|Q_{\widetilde{\gamma}_k}(x^k)\|_2^{2-\varrho} \|\mathcal{R}_{*}(x^k)\|^{2-\varrho}}=\frac{\overline{c}_1\|\mathcal{R}_{*}(x^k)\|^{\varrho}}{\|Q_{\widetilde{\gamma}_k}(x^k)\|_2^{2-\varrho}}.
  \end{equation*}
  Note that for each $k > \overline{k}$, $\|Q_{\widetilde{\gamma}_k}(x^k)\|_2\le 1\!+\!\widetilde{\gamma}_k\|\nabla^2\! f(x^k)\|_2\le 1\!+\!\widetilde{L}^{-1}L\le 2$. Then, 
  \begin{equation}\label{temp-ineqMR}
  \sum_{k=\overline{k}+1}^{\infty}\|\mathcal{R}_{*}(x^k)\|^{\varrho} \leq 2^{2-\varrho}\,\overline{c}_1^{-1}  \sum_{k=\overline{k}+1}^{\infty}(\widetilde{\Delta}_{k}-\widetilde{\Delta}_{k+1}) \leq 2^{2-\varrho}\,\overline{c}_1^{-1}\widetilde{\Delta}_{\overline{k}+1}<\infty,
 \end{equation}
 where the second inequality holds by $\widetilde{\Delta}_k\ge 0$ for $k>\overline{k}$. The claimed fact holds.

 Now from the definitions of $y^{k+1}$ and $\|d^k\|$ in step (2c), for any $l>\overline{k}+1$, we have
 \begin{equation*}
 \sum_{k=\overline{k}+1}^{l}\|x^k\!-\!y^{k+1}\|\leq \sum_{k=\overline{k}+1}^{l}\big(\|d^k\|+\|x^k-\mathcal{T}_*(x^k)\|\big)\leq \!\! \sum_{k=\overline{k}+1}^{l}\!\!\big(\overline{\varsigma}\|\mathcal{R}_*(x^k)\|^{\varrho}+\widetilde{L}\|\mathcal{R}_*(x^k)\|\big).
 \end{equation*}
 Passing the limit $l\to\infty$ and using  \eqref{temp-ineqMR} leads to $\sum_{k=\overline{k}+1}^{\infty}\|x^k-\!y^{k+1}\|<\infty$. 
 Note that $\|x^k-x^{k+1}\| \leq \|x^{k+1}-y^{k+1}\|+\|x^{k} - y^{k+1}\|$ and $\lim_{k\rightarrow \infty}\|x^k-y^k\|= 0$ (Proposition \ref{prop-descent}), which combines the results in Steps 1-2 yields that $ \sum_{k=\overline{k}+1}^{\infty}\|x^k-\!x^{k+1}\|<\infty$
 and $\sum_{k=\overline{k}+1}^{\infty}\|y^k-\!y^{k+1}\|<\infty$.  Therefore, both sequences are convergent and share the same limit, which belongs to $\mathcal{X}^*$ by Proposition \ref{prop-supp} (i). The proof is completed.
 \end{proof}
 \begin{remark}
 (i) It is worth pointing out that Themelis et al.  \cite{Themelis18} achieved the convergence of the iterate sequence generated by a hybrid of PG and quasi-Newton methods by requiring that the direction $d^k$ is upper bounded by the residual, apart from the KL property of the objective function. In \cite{wu23, wu23b}, the authors achieved the full convergence of the iterate sequences generated by a hybrid of PG and Newton methods by imposing a curvature ratio assumption on the direction, as well as the KL property of the objective function. The assumptions on the direction in these works are not verifiable when the generalized Hessian is not uniformly positive definite. In Theorem \ref{thm-gconvergence}, we prove the convergence of the iterate sequence generated by Algorithm \ref{hybrid} under the KL property of the objective function with exponent $\theta\in[\frac{1}{2},\frac{1}{2-\varrho}]$. 

 \noindent
 (ii) By Definition \ref{KL-Def}, if $F$ is a KL function of exponent $\theta<1$, then it is also a KL function of exponent $[\theta, 1)$. Obviously, if $F$ is a KL function of exponent $1/2$, then it is necessarily a KL function of exponent $\theta\in[\frac{1}{2},\frac{1}{2-\varrho}]$. In \cite{wu21}, some zero-norm regularized composite functions are shown to be a KL function of exponent $1/2$. 
 \end{remark}
 \section{Local superlinear convergence rate}\label{sec5}

 To proceed with the local superlinear convergence rate analysis of Algorithm \ref{hybrid}, we make the following assumption.
 \begin{assumption}\label{ass-local}
  {\rm (i)} $\{x^k\}_{k\in\mathbb{N}}$ converges to $x^*\in\mathcal{X}^*$ that is a local minimum of $F$.

  \noindent
 {\rm (ii)} The mapping $\mathcal{R}_{*}$ is metrically subregular at $x^*$ for the origin, i.e., there exist $\varepsilon>0$ and $\kappa>0$ such that for all $x\in\mathbb{B}(x^*,\varepsilon)\cap \mathcal{O}$, $ {\rm dist}(x,\mathcal{X}^*) \leq \kappa\,{\rm dist}(0, \mathcal{R}_{*}(x))$.

 \noindent
 {\rm (iii)} There exists $\overline{\varepsilon}\in(0,\varepsilon)$ such that for all $x\in \mathbb{B}(x^*,\overline{\varepsilon})\cap \mathcal{X}^*$, $F_*(x) = F_*(x^*)$.
 \end{assumption}

 Assumption \ref{ass-local} (i) seems restrictive because $x^*\in\mathcal{X}^*$ is required to be a local minimum of $F$, but it does not require the isolatedness of $x^*$. Such an assumption has been used for the local convergence analysis of the hybrid algorithms in \cite{Themelis18,themelis21}. As $x^*$ is a local minimum of $F_{*}$ by \cite[Theorem 4.4]{themelis21}, it follows from \cite[Proposition 2 (ii)]{liu2024inexact} that the KL property of $F_{*}$ with exponent $1/2$ at $x^*$ implies the metric subregularity of $\partial F_{*}$ at $x^*$ for the origin, and thus that of $\mathcal{R}_*$ at $x^*$ for the origin by Lemma \ref{lemma-subregularity-growth} (i). Thus, together with \cite[Remark 5.1 (ii)]{yu2022kurdyka}, we conclude that the KL property of $F$ with exponent $1/2$ at $x^*$ implies Assumption \ref{ass-local} (ii). Assumption \ref{ass-local} (iii) is weaker than the similar ones concerning separation of stationary values used in \cite{Luo93,lipong18} by recalling that $\mathcal{X}^*$ is the set of $L$-stationary point corresponding to stepsize $\gamma = \widetilde{L}^{-1}$, a common condition in the local convergence analysis of algorithms for nonconvex optimization. The above discussions show that Assumption \ref{ass-local} is rather mild. 
  
 By Proposition \ref{prop-supp} (ii), under Assumptions \ref{ass-g}-\ref{ass-local}, $\mathcal{R}_{*}$ is locally Lipschitz continuous at $x^*$, and we denote by  $L_{\mathcal{R}_*}$ its Lipschitz modulus at $x^*$. Next we show that under Assumptions \ref{ass-g}-\ref{ass-local}, if $\{d^k\}_{k\in\mathbb{N}}$ is a sequence of superlinear directions\footnote{The $\{d^k\}_{k\in\mathbb{N}}$ is called a sequence of superlinear directions if
 $\lim_{k\rightarrow\infty} \frac{{\rm dist}(x^k +d^k, \mathcal{X}^*)}{{\rm dist}(x^k, \mathcal{X}^*)}=0$.} (see \cite[Definition 5.9]{themelis21}), the semismooth Newton step finally takes the unit step-size.
 \begin{lemma}\label{lemma-unit-step}
  Under Assumptions \ref{ass-g}-\ref{ass-local}, if $\{d^k\}_{k\in\mathbb{N}}$ is a sequence of superlinear directions, then the semismooth Newton step finally takes the unit step-size, i.e., $l_k= 0$ in step (2c) of Algorithm \ref{hybrid}.
 \end{lemma}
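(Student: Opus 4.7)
The plan is to show that for all sufficiently large $k$, step~(2c) accepts the unit step-size, so that $y^{k+1}=x^k+d^k$. By Proposition~\ref{prop-supp}(iii), for $k>\overline{k}$ we have $\widetilde{\gamma}_k=\widetilde{L}^{-1}$, and setting $c:=(\widetilde{L}^{-1}-\widetilde{L}^{-2}L)/4$ together with the shorthand of \eqref{Fstar}, the claim reduces to proving
\[
F_*(x^k+d^k)\leq F_*(x^k)-c\|\mathcal{R}_*(x^k)\|^2\qquad\text{for all $k$ large}.
\]
The overall strategy is to sandwich $F_*$ between two quadratic expressions in ${\rm dist}(\cdot,\mathcal{X}^*)$ around $x^*$, use the superlinear-direction hypothesis to annihilate the leading $[{\rm dist}(x^k,\mathcal{X}^*)]^2$ term coming from the upper bound at $x^k+d^k$, and finally convert the ${\rm dist}$-based decrease into a $\|\mathcal{R}_*\|^2$-decrease via local Lipschitzness.

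\textbf{Step~1 (quadratic sandwich).} From Proposition~\ref{Forder-Fgam}, $\nabla F_*=Q_{\widetilde{L}^{-1}}\mathcal{R}_*$ is locally Lipschitz around $x^*$ with some modulus $L_*$. For $x$ close to $x^*$, pick $\pi_x\in{\rm proj}_{\mathcal{X}^*}(x)$; then $\mathcal{R}_*(\pi_x)=0$, so $\nabla F_*(\pi_x)=0$, while Assumption~\ref{ass-local}(iii) gives $F_*(\pi_x)=F_*(x^*)$. The descent lemma applied from $\pi_x$ then yields $F_*(x)-F_*(x^*)\leq (L_*/2)[{\rm dist}(x,\mathcal{X}^*)]^2$. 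For the lower bound, Assumption~\ref{ass-local}(i) and \cite[Theorem~4.4]{themelis21} imply that $x^*$ is a local minimum of $F_*$; combining this with the metric subregularity of $\mathcal{R}_*$ at $x^*$ for $0$ (Assumption~\ref{ass-local}(ii)) and invoking Lemma~\ref{lemma-subregularity-growth}(ii) yields $\kappa'>0$ such that $F_*(x)-F_*(x^*)\geq\kappa'[{\rm dist}(x,\mathcal{X}^*)]^2$ on a neighborhood of $x^*$.

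\textbf{Step~2 (using superlinearity).} Since $d^k\to 0$ by Proposition~\ref{prop-dkzero} and $x^k\to x^*$, both $x^k$ and $x^k+d^k$ eventually lie in the neighborhood of Step~1. Fix any $\eta\in(0,\kappa'/L_*)$; by the superlinear-direction hypothesis, $[{\rm dist}(x^k+d^k,\mathcal{X}^*)]^2\le\eta[{\rm dist}(x^k,\mathcal{X}^*)]^2$ for $k$ large. Subtracting the lower bound at $x^k$ from the upper bound at $x^k+d^k$ then gives
\[
F_*(x^k+d^k)-F_*(x^k)\leq -(\kappa'/2)[{\rm dist}(x^k,\mathcal{X}^*)]^2.
\]
Since $\mathcal{R}_*$ vanishes on $\mathcal{X}^*$ and is locally Lipschitz near $x^*$ with modulus $L_{\mathcal{R}_*}$ (Proposition~\ref{prop-supp}(ii)), one has $\|\mathcal{R}_*(x^k)\|\le L_{\mathcal{R}_*}{\rm dist}(x^k,\mathcal{X}^*)$, so the bound converts into $F_*(x^k+d^k)-F_*(x^k)\leq -[\kappa'/(2L_{\mathcal{R}_*}^2)]\|\mathcal{R}_*(x^k)\|^2$.

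The main technical obstacle I anticipate is to verify that the resulting decrement coefficient $\kappa'/(2L_{\mathcal{R}_*}^2)$ is at least the Armijo constant $c$. The quadratic-growth modulus $\kappa'$ arising from \cite[Remark~2.2(iii)]{artacho2013metric} is tied to the subregularity modulus of $\partial F_*$, which in turn is related to that of $\mathcal{R}_*$ through Lemma~\ref{lemma-subregularity-growth}(i); careful bookkeeping of these constants, or alternatively a sharper upper bound on $F_*(x^k+d^k)$ via the FBE identity $F_*(x^k+d^k)\leq\ell_{\widetilde{L}^{-1}}(\pi_{x^k+d^k};x^k+d^k)$ exploiting that $\pi_{x^k+d^k}\in{\rm dom}\,g$, should close the gap for $k$ large enough and deliver $l_k=0$.
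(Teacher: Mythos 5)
Your Steps 1--2 reproduce the first half of the paper's argument (quadratic growth of $F_*$ from Lemma \ref{lemma-subregularity-growth}(ii), an upper bound on $F_*(x^k+d^k)-F_*(x^*)$ of order $[{\rm dist}(x^k+d^k,\mathcal{X}^*)]^2$ obtained from the FBE at the projection point, and superlinearity of $d^k$ making the ratio of these two quantities vanish). But the final step is a genuine gap, and you have correctly located it yourself: the decrement coefficient $\kappa'/(2L_{\mathcal{R}_*}^2)$ you obtain has no reason to dominate the Armijo constant $(\widetilde{\gamma}_k-\widetilde{\gamma}_k^2L)/4$. No amount of ``careful bookkeeping'' closes this, because $\kappa'$ is inherited from the metric subregularity modulus $\kappa$ in Assumption \ref{ass-local}(ii) via \cite[Remark 2.2(iii)]{artacho2013metric} and can be arbitrarily small, while the Armijo constant is fixed by $L$ and $\alpha$; a route that passes through an absolute quadratic-growth constant and then divides by $L_{\mathcal{R}_*}^2$ cannot recover a threshold that the algorithm has hard-coded.

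The paper's resolution is a \emph{ratio} argument that bypasses the absolute constants entirely. Define $\varepsilon_k:=\frac{F_*(x^k+d^k)-F_*(x^*)}{F_*(x^k)-F_*(x^*)}$; your sandwich shows $\varepsilon_k\le\frac{\widetilde{L}\,{\rm dist}(x^k+d^k,\mathcal{X}^*)^2}{\kappa_0\,{\rm dist}(x^k,\mathcal{X}^*)^2}\to 0$, so eventually $\varepsilon_k<\tfrac12$. Then write $F_*(x^k+d^k)-F_*(x^k)=(\varepsilon_k-1)\bigl(F_*(x^k)-F_*(x^*)\bigr)$ and lower-bound the second factor \emph{not} by quadratic growth but by the proximal-gradient decrease: since $x^*$ is a local minimum of $F$ and $\mathcal{T}_*(x^k)\to x^*$, one has $F(\mathcal{T}_*(x^k))\ge F(x^*)=F_*(x^*)$, whence $F_*(x^k)-F_*(x^*)\ge F_*(x^k)-F(\mathcal{T}_*(x^k))\ge\frac{\widetilde{\gamma}_k-\widetilde{\gamma}_k^2L}{2}\|\mathcal{R}_{\widetilde{\gamma}_k}(x^k)\|^2$ by Lemma \ref{lemma-PGdescent}(ii). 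Multiplying by $\varepsilon_k-1<-\tfrac12$ delivers exactly $-\frac{\widetilde{\gamma}_k-\widetilde{\gamma}_k^2L}{4}\|\mathcal{R}_{\widetilde{\gamma}_k}(x^k)\|^2$, i.e.\ the Armijo test with $l=0$, with the factor of $2$ between the PG decrease and the line-search threshold absorbing the $\varepsilon_k<\tfrac12$ slack. This is the key idea missing from your write-up; your alternative suggestion of a sharper FBE upper bound is indeed the inequality the paper uses, but it must be combined with this relative formulation rather than with the quadratic-growth-plus-Lipschitz conversion.
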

 \begin{proof}
  It is clear that $\omega(x^0) \subset \mathcal{O}$, which together with Proposition \ref{prop-dkzero} (i) implies that if necessary enlarging $\overline{k}$, $x^k+d^k \in \mathcal{O}$ for $k>\overline{k}$, and thus $F_*(x^k+d^k)$ is well defined.
  By Lemma \ref{lemma-subregularity-growth} (ii) and Assumption \ref{ass-local} (ii), $F_*$ possesses a quadratic growth at $x^*$, so there exists $\kappa_0>0$ such that 
 \begin{equation}\label{varphilam_quadgrowth}
  F_{*}(x^k) - F_{*}(x^*) \geq \kappa_0{\rm dist}(x^k, \mathcal{X}^*)^2, \ {\rm for \ } k\geq \overline{k} \ {\rm (if \ necessary \ by \ enlarging \ } \overline{k}).
 \end{equation}
 For each $k>\overline{k}$, taking $x^{k,*}\in {\rm proj}_{\mathcal{X}^*}(x^k\!+\!d^k)$, we have $\|x^{k,*}- x^*\|\leq \|x^k+d^k-x^{k,*}\| + \|x^k+d^k - x^*\| \leq 2\|x^k+d^k - x^*\|$. Using Proposition \ref{prop-dkzero} (i) and $\lim_{k\to\infty}x^k=x^*$ leads to $\lim_{k\rightarrow \infty} \|x^{k,*}- x^*\| =0$. By Assumption \ref{ass-local} (iii), $F(x^{k,*}) = F_{*}(x^{k,*}) = F_{*}(x^*)$ for $k>\overline{k}$ (if necessary by enlarging $\overline{k}$). For each $k>\overline{k}$, from $x^{k,*}\in {\rm proj}_{\mathcal{X}^*}(x^k\!+\!d^k)$, we have $x^{k,*}\in\mathcal{T}_{\widetilde{L}^{-1}}(x^{k,*})$. Invoking \eqref{def-fbe} with $x=x^k + d^k$ and $\gamma=\widetilde{L}^{-1}$  yields
 \begin{align}\label{FBE-descent-lemma}
  F_{*}(x^k + d^k)-F(x^{k,*})
  &\le f(x^k+d^k)+\langle\nabla\!f(x^k+d^k), x^{k,*}-(x^k+d^k)\rangle\nonumber\\
  &\quad+({\widetilde{L}}/{2})\|x^{k,*} - (x^k+d^k)\|^2-f(x^{k,*})\nonumber\\
 & \leq (\widetilde{L}/2+L/2)\|x^{k,*} - (x^k+d^k)\|^2 \leq \widetilde{L}\|x^{k,*} - (x^k+d^k)\|^2,
 \end{align}
 where the second inequality is obtained by using Assumption \ref{ass-g} (i) and the descent lemma. From the above \eqref{varphilam_quadgrowth} and \eqref{FBE-descent-lemma}, it follows that for any $k>\overline{k}$, 
 \begin{equation}\label{eq-superlinear}
 \varepsilon_k := \frac{F_{*}(x^k+d^k) - F_{*}(x^*)}{F_{*}(x^k) - F_{*}(x^*)} \leq \frac{\widetilde{L}~\!{\rm dist}(x^k +d^k, \mathcal{X}^*)^2}{\kappa_0{\rm dist}(x^k, \mathcal{X}^*)^{2}}
 \end{equation}
 which, by using the condition that $\{d^k\}_{k>\overline{k}}$ is a sequence of superlinear directions, means that $\varepsilon_k\to 0$ as $k\to\infty$. Without loss of generality, we assume that $\varepsilon_k\in(0,\frac{1}{2})$ for $k>\overline{k}$. Next we claim that for sufficiently large $k$, $F(\mathcal{T}_{*}(x^k)) \geq F_*(x^*)$. From Proposition \ref{prop-dkzero} (i) and Lemma \ref{lemma-bounded} (ii), it follows that $\lim_{k\to\infty}\|\mathcal{T}_{*}(x^k)-x^k\|=0$ and then $\lim_{k\to\infty}\|\mathcal{T}_{*}(x^k)-x^*\|=0$. Since $x^*$ is a local minimum of $F$, we have $F(\mathcal{T}_{*}(x^k)) \geq F(x^*) = F_*(x^*)$ for all $k>\overline{k}$ (if necessary by enlarging $\overline{k}$), where the equality is by Proposition \ref{prop-supp} (iv). The claimed inequality holds.
 Now from \eqref{eq-superlinear}, 
 \begin{align*}
  F_{*}(x^k+d^k) - F_{*}(x^k) & = F_{*}(x^k+d^k) - F_{*}(x^*) - (F_{*}(x^k)-F_{*}(x^*)) \\
  & = (\varepsilon_k-1) (F_{*}(x^k)-F_{*}(x^*))
   \leq (\varepsilon_k-1) (F_{*}(x^k)-F (\mathcal{T}_{*}(x^k))) \\
  & \leq (\varepsilon_k - 1) \frac{\widetilde{\gamma}_k-\widetilde{\gamma}_k^2 L}{2} \|\mathcal{R}_{\widetilde{\gamma}_k}(x^k)\|^2 < -\frac{\widetilde{\gamma}_k-\widetilde{\gamma}_k^2 L}{4} \|\mathcal{R}_{\widetilde{\gamma}_k}(x^k)\|^2,
 \end{align*}
 where the second inequality follows Lemma \ref{lemma-PGdescent} (ii). This shows that for $k>\overline{k}$, step (2c) holds with $l= 0$, i.e., the semismooth Newton step takes the unit step-size. 
 \end{proof}

Next under Assumptions \ref{ass-g}-\ref{ass-local} we provide some conditions to ensure that $\{d^k\}_{k>\overline{k}}$ is a sequence of superlinear directions, where $\overline{k}$ is the one in Proposition \ref{prop-supp}. These conditions are stated as follows:
 \begin{enumerate}
 \item[{\it (C1)}] there exist $\varepsilon_1>0,\kappa_1>0$ and $\rho_1\in (\tau,1]$ such that for any $x\in\mathbb{B}(x^*,\varepsilon_1)\cap \mathcal{O}$ and $\overline{x}\in {\rm proj}_{\mathcal{X}^*}(x)$, the inclusion $\partial^2 F_{*}(x) \subset \partial^2 F_{*}(\overline{x}) +\kappa_1\|x-\overline{x}\|^{\rho_1}\mathbb{B}$ holds;
         
 \item[{\it (C2)}] there exists $\delta>0$ such that $H\succeq 0$ for every $H\in\bigcup_{x\in \mathcal{X}^*\cap \mathbb{B}(x^*,\delta)}\partial^2 F_{*}(x)$; 

 \item[{\it (C3)}] there exist $\varepsilon_2>0,\kappa_2>0$ and $\rho_2\in [\tau,1]$ such that for every $x,y\in \mathbb{B}(x^*,\varepsilon_2)\cap \mathcal{O}$ and $J_x \in \partial_C \mathcal{R}_{*}(x)$, $\|\mathcal{R}_*(y) -\mathcal{R}_*(x) - J_x(y-x)\| \leq \kappa_2\|y-x\|^{1+\rho_2}.$      
 \end{enumerate}

 \medskip
 We will give some remarks toward these three conditions at the end of this section. Before proving that $\{d^k\}_{k>\overline{k}}$ is a sequence of superlinear directions, we establish a crucial property of $d^k$ for $k\!>\!\overline{k}$ under Assumptions \ref{ass-g}-\ref{ass-local} and conditions {\it (C1)-(C2)}. 
 \begin{proposition}\label{prop-dk}
  For each $k\in\mathbb{N}$, let $G_k := H_k+\sigma_k\mu_kI$. Suppose that Assumptions \ref{ass-g}-\ref{ass-local} and {\it (C1)-(C2)} hold, Then for each $k>\overline{k}$, the optimal solution $d^k$ in step (2b) of Algorithm \ref{hybrid} satisfies $G_kd^k+Q_{\widetilde{\gamma}_k}(x^k)\mathcal{R}_{\widetilde{\gamma}_k}(x^k)=0$. 
 \end{proposition}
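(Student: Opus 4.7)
The plan is to show that for all $k>\overline{k}$ (after possibly enlarging $\overline{k}$), the symmetric matrix $G_k=H_k+\sigma_k\mu_k I$ is positive definite with a controlled lower bound on its smallest eigenvalue, and that the unique unconstrained minimizer $\widetilde{d}^k:=-G_k^{-1}Q_{\widetilde{\gamma}_k}(x^k)\mathcal{R}_{\widetilde{\gamma}_k}(x^k)$ of the strongly convex quadratic in step (2b) lies strictly inside the trust region $\{d:\|d\|\le\varsigma_k\|\mathcal{R}_{\widetilde{\gamma}_k}(x^k)\|^{\varrho}\}$. Once this is done, strong convexity forces the constrained minimizer to coincide with $\widetilde{d}^k$, and the stationarity relation $G_kd^k+Q_{\widetilde{\gamma}_k}(x^k)\mathcal{R}_{\widetilde{\gamma}_k}(x^k)=0$ follows immediately. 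Throughout I will use Proposition \ref{prop-supp} (iii) to fix $\widetilde{\gamma}_k=\widetilde{L}^{-1}$, so that $F_{\widetilde{\gamma}_k}=F_*$, $\mathcal{R}_{\widetilde{\gamma}_k}(x^k)=\mathcal{R}_*(x^k)$, and $\mu_k=\|\mathcal{R}_*(x^k)\|^{\tau}$.

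The main obstacle is the lower eigenvalue bound on $G_k$, and this is where conditions \emph{(C1)}, \emph{(C2)} and the metric subregularity enter. I pick $\overline{x}^k\in{\rm proj}_{\mathcal{X}^*}(x^k)$; by Assumption \ref{ass-local} (ii), $\|x^k-\overline{x}^k\|={\rm dist}(x^k,\mathcal{X}^*)\le\kappa\|\mathcal{R}_*(x^k)\|$, and since $x^k\to x^*$ we also have $\overline{x}^k\to x^*$. For $k$ large enough, both $x^k\in\mathbb{B}(x^*,\varepsilon_1)$ and $\overline{x}^k\in\mathbb{B}(x^*,\delta)$, so \emph{(C1)} furnishes some $\widehat{H}_k\in\partial^2F_*(\overline{x}^k)$ with $\|H_k-\widehat{H}_k\|\le\kappa_1\|x^k-\overline{x}^k\|^{\rho_1}\le\kappa_1\kappa^{\rho_1}\|\mathcal{R}_*(x^k)\|^{\rho_1}$, and \emph{(C2)} gives $\widehat{H}_k\succeq 0$. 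Combining these,
\[
 \lambda_{\min}(G_k)\ge\underline{\sigma}\|\mathcal{R}_*(x^k)\|^{\tau}-\kappa_1\kappa^{\rho_1}\|\mathcal{R}_*(x^k)\|^{\rho_1}.
\]
Since $\rho_1>\tau$ and $\|\mathcal{R}_*(x^k)\|\to 0$ by Proposition \ref{prop-dkzero} (i), the second term is dominated by the first for $k$ large, yielding $\lambda_{\min}(G_k)\ge(\underline{\sigma}/2)\|\mathcal{R}_*(x^k)\|^{\tau}>0$.

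With this bound in hand, $G_k$ is invertible and $\widetilde{d}^k$ is well defined. Using $\|Q_{\widetilde{L}^{-1}}(x^k)\|_2\le 2$ (established in the proof of Theorem \ref{thm-gconvergence}), I obtain
\[
 \|\widetilde{d}^k\|\le\|G_k^{-1}\|_2\|Q_{\widetilde{L}^{-1}}(x^k)\|_2\|\mathcal{R}_*(x^k)\|\le\frac{4}{\underline{\sigma}}\|\mathcal{R}_*(x^k)\|^{1-\tau}.
\]
Because the algorithmic constants satisfy $\varrho\in(0,1-\tau)$, we have $1-\tau-\varrho>0$, so $\|\mathcal{R}_*(x^k)\|^{1-\tau-\varrho}\to 0$ and for $k$ large enough $\|\widetilde{d}^k\|<\underline{\varsigma}\|\mathcal{R}_*(x^k)\|^{\varrho}\le\varsigma_k\|\mathcal{R}_*(x^k)\|^{\varrho}$. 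Hence $\widetilde{d}^k$ lies strictly in the interior of the feasible ball of the subproblem in step (2b); by strong convexity of the objective, the unique minimizer on the closed ball equals $\widetilde{d}^k$, so $d^k=\widetilde{d}^k$ and the desired equation $G_kd^k+Q_{\widetilde{\gamma}_k}(x^k)\mathcal{R}_{\widetilde{\gamma}_k}(x^k)=0$ follows.
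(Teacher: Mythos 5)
Your proposal is correct and follows essentially the same route as the paper's proof: use \emph{(C1)}, \emph{(C2)}, Weyl's inequality and the metric subregularity of $\mathcal{R}_*$ to bound $[-\lambda_{\min}(H_k)]_+$ by $\kappa_1\kappa^{\rho_1}\|\mathcal{R}_{\widetilde{\gamma}_k}(x^k)\|^{\rho_1}$, deduce $G_k\succeq\frac{1}{2}\sigma_k\|\mathcal{R}_{\widetilde{\gamma}_k}(x^k)\|^{\tau}I$ from $\rho_1>\tau$ and $\|\mathcal{R}_{\widetilde{\gamma}_k}(x^k)\|\to 0$, and then show the unconstrained Newton step has norm at most $4\underline{\sigma}^{-1}\|\mathcal{R}_{\widetilde{\gamma}_k}(x^k)\|^{1-\tau}$, which is strictly inside the trust region since $\varrho<1-\tau$, so strong convexity forces $d^k$ to coincide with it. The only cosmetic difference is that you carry the (correct) minus sign in the definition of the unconstrained minimizer and aggregate the eigenvalue estimate directly on $G_k$ rather than on $d^{\top}G_kd$; the substance is identical.
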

 \begin{proof}
  We claim that $[-\lambda_{\min}(H_k)]_+\leq\kappa_1[{\rm dist}(x^k,\mathcal{X}^*)]^{\rho_1}$ for $k>\overline{k}$ (if necessary by increasing $\overline{k}$). For each $k>\overline{k}$, pick any $\overline{x}^k\in{\rm proj}_{\mathcal{X}^*}(x^k)$. Then, it holds that
 \begin{equation}\label{xbark-limit}
 \lim_{k\rightarrow \infty}\|\overline{x}^k - x^*\| = 0
 \end{equation}
 because $\|\overline{x}^k - x^*\|\leq \|\overline{x}^k-x^k\| + \|x^k - x^*\| \leq 2 \|x^k- x^*\|$ and $\lim_{k\to\infty}x^k=x^*$. From condition {\it (C1)}, for every $k>\overline{k}$ (if necessary by enlarging $\overline{k}$) and $H_k\in\partial^2 F_{*}(x^k)$, there exists $\overline{H}_k \in \partial^2 F_{*}(\overline{x}^k)$ such that $\|H_k - \overline{H}_k\|_2\le\kappa_1\|x^k-\overline{x}^k\|^{\rho_1} = \kappa_1{\rm dist}(x^k,\mathcal{X}^*)^{\rho_1}$. Together with condition {\it (C2)}, $\overline{H}_k \succeq 0$.  Fix any $k>\overline{k}$. If $\lambda_{\min}(H_k) \geq 0$, the desired claim is trivial, so it suffices to consider that $\lambda_{\min}(H_k) < 0$. From Weyl's inequality (see \cite[Corollary III.2.6]{bhatia2013matrix}), it follows that for every $k>\overline{k}$, 
  \begin{align*}
  [-\lambda_{\min}(H_k)]_+\! =\! -\lambda_{\min}(H_k)\! \leq \!\lambda_{\min}(\overline{H}_k) \!-\!\lambda_{\min}(H_k) \!\leq\! \|H_k \!-\! \overline{H}_k\|_2\!\leq\! \kappa_1{\rm dist}(x^k,\mathcal{X}^*)^{\rho_1},
  \end{align*}
  where the first inequality is due to $\lambda_{\min}(\overline{H}_k)\ge 0$. The claimed conclusion holds.
  
 From the expression of $G_k$, for every $k>\overline{k}$ and $d\in\mathbb{R}^n$, it holds that
 \begin{align}\label{temp-ineqGk}
  d^{\top}G_kd&= d^{\top}(H_k +\mu_k\sigma_k I)d\ge \lambda_{\rm min}(H_k)\|d\|^2\!+\!\sigma_k\|\mathcal{R}_{\widetilde{\gamma}_k}(x^k)\|^{\tau}\|d\|^2,
 \end{align}
 where the inequality is by the definition of $\mu_k$. From the above claimed conclusion and Assumption \ref{ass-local} (ii), for $k>\overline{k}$, $[-\lambda_{\min}(H_k)]_+ \le\kappa_1[{\rm dist}(x^k,\mathcal{X}^*)]^{\rho_1} \leq \kappa_1\kappa^{\rho_1}\|\mathcal{R}_{\widetilde{\gamma}_k}(x^k)\|^{\rho_1}$, which implies that 
  $\lambda_{\min}(H_k) \geq -\kappa_1\kappa^{\rho_1} \|\mathcal{R}_{\widetilde{\gamma}_k}(x^k)\|^{\rho_1}$. Along with \eqref{temp-ineqGk}, for every $k>\overline{k}$ (enlarging $\overline{k}$ if necessary) and $d\in\mathbb{R}^n$, 
 \begin{equation}\label{Gkpd}
  d^{\top}G_kd \geq \left(\sigma_k\|\mathcal{R}_{\widetilde{\gamma}_k}(x^k)\|^{\tau} -\kappa_1\kappa^{\rho_1}\|\mathcal{R}_{\widetilde{\gamma}_k}(x^k)\|^{\rho_1}\right)\|d\|^2 \geq \frac{1}{2}\sigma_k\|\mathcal{R}_{\widetilde{\gamma}_k}(x^k)\|^{\tau}\|d\|^2,
  \end{equation}
  where the second inequality uses $\rho_1\in (\tau,1]$ and $\lim_{ k\rightarrow \infty}\|\mathcal{R}_{\widetilde{\gamma}_k}(x^k)\|= 0$ by Proposition \ref{prop-dkzero} (i). 
  Then, $G_k\succeq \frac{1}{2}\sigma_k\|\mathcal{R}_{\widetilde{\gamma}_k}(x^k)\|^{\tau}I$. Let $\overline{d}^k = G_k^{-1}Q_{\widetilde{\gamma}_k}(x^k)\mathcal{R}_{\widetilde{\gamma}_k}(x^k)$. We have
  \begin{align*}
  \|\overline{d}^k\| & \leq 2\sigma_k^{-1}\|\mathcal{R}_{\widetilde{\gamma}_k}(x^k)\|^{-\tau}\|Q_{\widetilde{\gamma}_k}(x^k)\|_2 \|\mathcal{R}_{\widetilde{\gamma}_k}(x^k)\| \\
  & \leq  4\underline{\sigma}^{-1}\|\mathcal{R}_{\widetilde{\gamma}_k}(x^k)\|^{1-\tau}< \underline{\varsigma}\|\mathcal{R}_{\widetilde{\gamma}_k}(x^k)\|^{\varrho}\leq \varsigma_k\|\mathcal{R}_{\widetilde{\gamma}_k}(x^k)\|^{\varrho},
 \end{align*}
 where the second inequality is using  $\|Q_{\widetilde{\gamma}_k}(x^k)\|_2\leq 1+\widetilde{\gamma}_k\|\nabla^2\!f(x^k)\|_2 \leq 2$, and the third one is due to $\varrho\in(0,1-\tau)$ and $\lim_{k\rightarrow \infty}\|\mathcal{R}_{\widetilde{\gamma}_k}(x^k)\|= 0$ by Proposition \ref{prop-dkzero} (i). 
 By using the above strict inequality, it is not hard to check that $\overline{d}^k$ is an optimal solution of the problem in step (2b). From the above \eqref{Gkpd}, we know that the objective function of the problem in step (2b) is strongly convex on its feasible set, which implies that it has a unique optimal solution. This means that $d^k=\overline{d}^k$, which implies that $G_kd^k+Q_{\widetilde{\gamma}_k}(x^k)\mathcal{R}_{\widetilde{\gamma}_k}(x^k)=0$. 
 \end{proof}

 Now we are ready to prove that $\{d^k\}_{k\in\mathbb{N}}$ is a sequence of superlinear directions, and apply this result to achieve the local superlinear convergence rate of $\{y^k\}_{k\in\mathbb{N}}$. 
 \begin{theorem}\label{dir-slinear}
 Suppose that Assumptions \ref{ass-g}-\ref{ass-local} and conditions {\it (C1)-(C3)} hold. Then there exists $\widetilde{c}>0$ such that ${\rm dist}(x^k\!+d^k,\mathcal{X}^*)\leq \widetilde{c}\,{\rm dist}(x^k,\mathcal{X}^*)^{1+\tau}$ for all $k>\overline{k}$ (if necessary by increasing $\overline{k}$), so $\{d^k\}_{k>\overline{k}}$ is a sequence of superlinear directions and ${\rm dist}(y^{k},\mathcal{X}^*)$ converges to $0$  at a superlinear convergence rate.
 \end{theorem}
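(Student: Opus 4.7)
My plan is to bound $\mathrm{dist}(x^k+d^k,\mathcal{X}^*)$ by invoking Assumption~\ref{ass-local}(ii) once I control $\|\mathcal{R}_*(x^k+d^k)\|$. The strategy has three steps: (a) use Condition \textit{(C3)} as a semismoothness-type expansion of $\mathcal{R}_*$ around $x^k$; (b) combine it with the Newton equation $G_kd^k+Q_k\mathcal{R}_*(x^k)=0$ from Proposition~\ref{prop-dk} to obtain the sharp estimate $\|d^k\|=O(\mathrm{dist}(x^k,\mathcal{X}^*))$; (c) expand $\mathcal{R}_*(x^k+d^k)$ via \textit{(C3)} again, plug in the bounds from (b), and apply metric subregularity.

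For the setup, I fix $k>\overline{k}$ so that $\widetilde{\gamma}_k=\widetilde{L}^{-1}$ (Proposition~\ref{prop-supp}), pick $\bar{x}^k\in\mathrm{proj}_{\mathcal{X}^*}(x^k)$, and abbreviate $\Delta_k:=\mathrm{dist}(x^k,\mathcal{X}^*)=\|x^k-\bar{x}^k\|$. By Proposition~\ref{prop-CJac} and the definition \eqref{GJac-Fgam}, I can write $H_k=Q_kJ_k$ with $J_k\in\partial_C\mathcal{R}_*(x^k)$. Applying \textit{(C3)} at $(x^k,\bar{x}^k)$ and using $\mathcal{R}_*(\bar{x}^k)=0$ yields $\mathcal{R}_*(x^k)=J_k(x^k-\bar{x}^k)+\delta_k$ with $\|\delta_k\|\le\kappa_2\Delta_k^{1+\rho_2}$. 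Substituting this and $H_k=G_k-\sigma_k\mu_kI$ into the Newton equation produces the key identity
\[
G_k\bigl(d^k+(x^k-\bar{x}^k)\bigr)=\sigma_k\mu_k(x^k-\bar{x}^k)-Q_k\delta_k.
\]
Using $\|G_k^{-1}\|\le 2/(\sigma_k\mu_k)$ (available from the proof of Proposition~\ref{prop-dk}) together with the lower bound $\mu_k\ge\kappa^{-\tau}\Delta_k^{\tau}$ from Assumption~\ref{ass-local}(ii), I obtain $\|d^k+(x^k-\bar{x}^k)\|\le 2\Delta_k+C\Delta_k^{1+\rho_2-\tau}$, which is $O(\Delta_k)$ since $\rho_2\ge\tau$ by \textit{(C3)}. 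Hence $\|d^k\|\le C_d\Delta_k$ for some $C_d>0$ and all $k$ large enough.

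For the residual, I apply \textit{(C3)} at $(x^k,x^k+d^k)$ to get $\|\mathcal{R}_*(x^k+d^k)-\mathcal{R}_*(x^k)-J_kd^k\|\le\kappa_2\|d^k\|^{1+\rho_2}$, and rearrange the Newton equation as $J_kd^k=-\mathcal{R}_*(x^k)-\sigma_k\mu_kQ_k^{-1}d^k$, which combine into
\[
\|\mathcal{R}_*(x^k+d^k)\|\le\sigma_k\mu_k\|Q_k^{-1}\|\,\|d^k\|+\kappa_2\|d^k\|^{1+\rho_2}.
\]
Plugging in $\mu_k\le L_{\mathcal{R}_*}^{\tau}\Delta_k^{\tau}$, the uniform bound $\|Q_k^{-1}\|_2\le (L+2\alpha)/(2\alpha)$ from the proof of Lemma~\ref{lemma-subregularity-growth}, and $\|d^k\|\le C_d\Delta_k$, the first term is $O(\Delta_k^{1+\tau})$; since $\rho_2\ge\tau$ and $\Delta_k\le 1$ eventually, the second is also $O(\Delta_k^{1+\tau})$. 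Thus $\|\mathcal{R}_*(x^k+d^k)\|\le C_R\Delta_k^{1+\tau}$, and Assumption~\ref{ass-local}(ii) yields $\mathrm{dist}(x^k+d^k,\mathcal{X}^*)\le\tilde{c}\Delta_k^{1+\tau}$ with $\tilde{c}=\kappa C_R$. Since $\Delta_k\to 0$, the ratio $\mathrm{dist}(x^k+d^k,\mathcal{X}^*)/\Delta_k\le\tilde{c}\Delta_k^{\tau}\to 0$, so $\{d^k\}_{k>\overline{k}}$ is a sequence of superlinear directions.

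To close the argument for $\{y^k\}$, Lemma~\ref{lemma-unit-step} now applies, giving $l_k=0$ and $y^{k+1}=x^k+d^k$ for large $k$. Any $\bar{y}^k\in\mathrm{proj}_{\mathcal{X}^*}(y^k)$ is a critical point of $F$ and hence satisfies $\bar{y}^k\in\mathcal{T}_{\gamma_k}(\bar{y}^k)$; a local Lipschitz estimate for $\mathcal{T}_{\gamma_k}$ near $\bar{y}^k$ then gives $\mathrm{dist}(x^k,\mathcal{X}^*)\le L_T\mathrm{dist}(y^k,\mathcal{X}^*)$, whence $\mathrm{dist}(y^{k+1},\mathcal{X}^*)\le\tilde{c}L_T^{1+\tau}\mathrm{dist}(y^k,\mathcal{X}^*)^{1+\tau}$, a superlinear rate. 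The main obstacle is the sharp bound $\|d^k\|=O(\Delta_k)$: the naive estimate from the proof of Proposition~\ref{prop-dk} only yields $\|d^k\|=O(\Delta_k^{1-\tau})$, which combined with the $\sigma_k\mu_k$ factor would leave $\|\mathcal{R}_*(x^k+d^k)\|$ at merely the linear order $O(\Delta_k)$. The improvement comes from the ``Newton cancellation'' in the displayed identity, where the ill-conditioned factor $1/(\sigma_k\mu_k)$ multiplies the harmless quantity $\sigma_k\mu_k(x^k-\bar{x}^k)$ rather than $\mathcal{R}_*(x^k)$ itself.
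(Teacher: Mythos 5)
Your Steps (a)--(c) are essentially the paper's own argument: the paper also writes $H_k=Q_kJ_k$ with $J_k\in\partial_C\mathcal{R}_*(x^k)$, sets $w^k=d^k-(\overline{x}^k-x^k)$ and $v^k=G_kw^k$, bounds $\|v^k\|$ by applying \textit{(C3)} at $(x^k,\overline{x}^k)$ together with $\mu_k\sigma_k\|\overline{x}^k-x^k\|\le\overline{\sigma}L_{\mathcal{R}_*}^{\tau}\Delta_k^{1+\tau}$, inverts $G_k\succeq\frac{1}{2}\sigma_k\mu_k I$ using the lower bound $\mu_k\ge\kappa^{-\tau}\Delta_k^{\tau}$ from metric subregularity, and then applies \textit{(C3)} a second time at $(x^k,x^k+d^k)$ to get $\|\mathcal{R}_*(x^k+d^k)\|=O(\Delta_k^{1+\tau})$ before invoking Assumption \ref{ass-local}(ii). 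Your "Newton cancellation" identity is exactly the paper's $v^k=G_kw^k$ in disguise, and your closing remark about why the naive $O(\Delta_k^{1-\tau})$ bound on $\|d^k\|$ is insufficient correctly identifies the crux. This part of your proposal is correct.

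The gap is in your final step converting ${\rm dist}(x^k,\mathcal{X}^*)$ into ${\rm dist}(y^k,\mathcal{X}^*)$. You assert that $\overline{y}^k\in{\rm proj}_{\mathcal{X}^*}(y^k)$, being a critical point of $F$, satisfies $\overline{y}^k\in\mathcal{T}_{\gamma_k}(\overline{y}^k)$, and that $\mathcal{T}_{\gamma_k}$ is Lipschitz near $\overline{y}^k$ with a constant $L_T$ uniform in $k$. Neither claim is available here. The set $\mathcal{X}^*$ consists of fixed points of $\mathcal{T}_{\widetilde{L}^{-1}}$, and for a nonconvex $g$ the fixed-point property does not transfer from the stepsize $\widetilde{L}^{-1}$ to the strictly larger stepsize $\gamma_k\ge\frac{1}{L+\alpha}$ (being a critical point of $F$ only gives $0\in\widehat{\partial}\,\ell_{\gamma_k}(\cdot;\overline{y}^k)$ at $\overline{y}^k$, not global minimality of the proximal subproblem). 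Likewise, Proposition \ref{Forder-Fgam} guarantees single-valuedness and Lipschitz continuity of $\mathcal{T}_{\gamma}$ only for sufficiently small $\gamma$, whereas $\gamma_k$ can be as large as $\overline{\gamma}$, and no uniform modulus $L_T$ over $k$ is established. The paper avoids this entirely: it uses the quadratic growth of $F_*$ at $x^*$ (Lemma \ref{lemma-subregularity-growth}(ii), available under Assumption \ref{ass-local}(i)--(ii)) together with the chain $F_*(x^k)\le F(x^k)\le F_{\gamma_k}(y^k)\le F_*(y^k)$ (the last inequality from \eqref{FBEcomp}) and the descent-lemma estimate $F_*(y^k)-F_*(x^*)\le\widetilde{L}\,{\rm dist}(y^k,\mathcal{X}^*)^2$ to conclude ${\rm dist}(x^k,\mathcal{X}^*)\le\sqrt{\widetilde{L}/\kappa_0}\;{\rm dist}(y^k,\mathcal{X}^*)$. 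You should replace your Lipschitz argument by this energy-based comparison; everything else in your proposal then goes through.
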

 \begin{proof}
  We prove the first part of the conclusions by the following two steps. 
 
 {\bf Step 1:} To prove the existence of $\widetilde{c}_1>0$ such that $\|d^k\| \leq \widetilde{c}_1{\rm dist}(x^k,\mathcal{X}^*)$ for $k>\overline{k}$. For each $k>\overline{k}$, let $\overline{x}^k \in {\rm proj}_{\mathcal{X}^*}(x^k)$, $v^k\!:= -Q_{\widetilde{\gamma}_k}(x^k)\mathcal{R}_*(x^k) - G_k(\overline{x}^k-x^k)$ and $w^k\!:= d^k - (\overline{x}^k-x^k)$. By Proposition \ref{prop-dk}, for $k>\overline{k}$, $v^k = G_k w^k$, which along with $G_k=H_k +\sigma_k\mu_k I$ and $H_k = Q_{\widetilde{\gamma}_k}(x^k) J_k$ for some $J_k \in \partial_C\mathcal{R}_{*}(x^k)$ implies that
\begin{equation}
    \begin{aligned}
        \|v^k\| & = \| Q_{\widetilde{\gamma}_k}(x^k) \mathcal{R}_*(x^k) + H_k (\overline{x}^k - x^k) + \mu_k\sigma_k (\overline{x}^k - x^k)\|\\
 & \leq \|Q_{\widetilde{\gamma}_k}(x^k)\|_2\|\mathcal{R}_*(x^k) + J_k (\overline{x}^k - x^k)\| + \mu_k\sigma_k \|\overline{x}^k - x^k\| \\
  & \leq 2\kappa_2\|\overline{x}^k- x^k\|^{1+\rho_2} + \overline{\sigma}\|\mathcal{R}_{*}(x^k)\|^{\tau}\|\overline{x}^k- x^k\|\leq (2\kappa_2+\overline{\sigma}L_{\mathcal{R}_*}^{\tau}){\rm dist}(x^k,\mathcal{X}^*)^{1+\tau},
  \end{aligned}
\end{equation}
 where the second inequality follows from {\it (C3)}, $\|Q_{\widetilde{\gamma}_k}(x^k)\|_2\leq 2$ and the definition of $\mu_k$, and the last one is due to $\rho_2\in[\tau,1]$ and $\|\mathcal{R}_*(x^k)\|=\|\mathcal{R}_*(x^k)-\mathcal{R}_*(\overline{x}^k)\|\leq L_{\mathcal{R}_*}\|x^k - \overline{x}^k\|$ by recalling that $L_{\mathcal{R}_*}$ is the Lipschitz constant of $\mathcal{R}_*$ around $x^*$.
 Fix any $k>\overline{k}$. From the proof of Proposition \ref{prop-dk}, $G_k \succeq \frac{1}{2}\sigma_k\|\mathcal{R}_{\widetilde{\gamma}_k}(x^k)\|^{\tau}$, for which 
 \[
 \|w^k\| \leq   2\sigma_k^{-1}\|\mathcal{R}_*(x^k)\|^{-\tau}\|v^k\| \leq 2\underline{\sigma}^{-1}\kappa^{-\tau}(2\kappa_2+\overline{\sigma}L_{\mathcal{R}_*}^{\tau}){\rm dist}(x^k,\mathcal{X}^*).
 \]
 Thus, $\|d^k\| \leq \|w^k\| + \|\overline{x}^k- x^k\| \leq [2\underline{\sigma}^{-1}\kappa^{-\tau}(2\kappa_2+\overline{\sigma}L_{\mathcal{R}_*}^{\tau})+1]{\rm dist}(x^k,\mathcal{X}^*)$. Consequently, the desired result holds with $\widetilde{c}_1:= 2\underline{\sigma}^{-1}\kappa^{-\tau}(2\kappa_2+\overline{\sigma}L_{\mathcal{R}_*}^{\tau})+1$.

{\bf Step 2:} To prove the existence of $\widetilde{c}>0$ such that for all $k>\overline{k}$, ${\rm dist}(x^k\!+d^k,\mathcal{X}^*)\leq \widetilde{c}\,{\rm dist}(x^k,\mathcal{X}^*)^{1+\tau}$. As $\lim_{k\to\infty}(x^k\!+\!d^k)=x^*$, for $k>\overline{k}$ (if necessary by increasing $\overline{k}$), 
\begin{align}\label{dist-MXstar}
 {\rm dist}(x^k\!+\!d^k, \mathcal{X}^*) & \leq \kappa \|\mathcal{R}_*(x^k\!+\!d^k)\|\leq  \kappa[L/(2\alpha)\!+1]\|Q_{\widetilde{\gamma}_k}(x^k)\mathcal{R}_*(x^k\!+\! d^k)\|,
 \end{align}
 where the first inequality uses Assumption \ref{ass-local} (ii), and the second one follows by \eqref{minieigen-Qk}.
 From Proposition \ref{prop-dk} and the expression of $G^k$, it follows that 
 $Q_{\widetilde{\gamma}_k}(x^k) \mathcal{R}_*(x^k)+H_kd^k + \mu_k\sigma_kd^k = 0$ for $k>\overline{k}$. Recall that $H_k\in\partial^2 F_{*}(x^k)$ for $k>\overline{k}$ and that $L_{\mathcal{R}_*}$ is the Lipschitz constant of $\mathcal{R}_*$ around $x^*$. By Proposition \ref{prop-CJac} there exists $J_k \in \partial_C \mathcal{R}_*(x^k)$ such that $H_k = Q_{\widetilde{\gamma}_k}(x^k)J_k$, then
 \begin{align*}
 & \quad \ \|Q_{\widetilde{\gamma}_k}(x^k)\mathcal{R}_*(x^k\!+\!d^k)\| \\
 &\leq \|Q_{\widetilde{\gamma}_k}(x^k)\mathcal{R}_*(x^k\!+\!d^k) - Q_{\widetilde{\gamma}_k}(x^k)\mathcal{R}_*(x^k) - H_kd^k\| + \mu_k\sigma_k \|d^k\|\\
 &\le \|Q_{\widetilde{\gamma}_k}(x^k)\|_2 \|\mathcal{R}_{*}(x^k+d^k) - \mathcal{R}_*(x^k) - J_kd^k\| + \mu_k\sigma_k\|d^k\|\\
 &\stackrel{{\rm (C3)}}{\le} 2\kappa_2\|d^k\|^{1+\rho_2} + \overline{\sigma}L_{\mathcal{R}_*}^{\tau}\|x^k - \overline{x}^k\|^{\tau}\|d^k\|\\
 &\stackrel{{\rm Step 1}}{\le} 2\kappa_2\widetilde{c}_1^{1+\rho_2}[{\rm dist}(x^k,\mathcal{X}^*)]^{1+\rho_2}+\overline{\sigma}L_{\mathcal{R}_*}^{\tau}\widetilde{c}_1[{\rm dist}(x^k,\mathcal{X}^*)]^{1+\tau}\\
 &\le (2\kappa_2\widetilde{c}_1^{1+\rho_2}+\overline{\sigma}L_{\mathcal{R}_*}^{\tau}\widetilde{c}_1)[{\rm dist}(x^k,\mathcal{X}^*)]^{1+\tau}
 \end{align*}
 where the third inequality is also using $\|Q_{\widetilde{\gamma}_k}(x^k)\|_2\le 2$ and the expression of $\mu_k$, and the last one is due to $\rho_2\in[\tau,1]$. Together with the above \eqref{dist-MXstar}, for $k>\overline{k}$, we have 
 ${\rm dist}(x^k\!+\!d^k, \mathcal{X}^*)\le \widetilde{c}\,{\rm dist}(x^k,\mathcal{X}^*)^{1+\tau}$
 with $\widetilde{c}:=\frac{\kappa(L+2\alpha)}{2\alpha}(2\kappa_2\widetilde{c}_1^{1+\rho_2}+\overline{\sigma}L_{\mathcal{R}_*}^{\tau}\widetilde{c}_1)$. 

 Now we have proved that $\{d^k\}_{k\in\mathbb{N}}$ is a sequence of superlinear directions. The rest focuses on the superlinear convergence rate of the sequence $\{y^k\}_{k\in\mathbb{N}}$. From Lemma \ref{lemma-unit-step} and the definition of $y^{k+1}$, we have $y^{k+1}=x^k+d^k$ for $k>\overline{k}$. Therefore, 
 \begin{equation}\label{superlineardk}
  \limsup_{k\to\infty} \frac{{\rm dist}(y^{k+1}, \mathcal{X}^*)}{{\rm dist}(x^k, \mathcal{X}^*)^{1+\tau}} \leq \widetilde{c}.
 \end{equation}
 Fix any $k>\overline{k}$. Let $\overline{x}^k \in {\rm proj}_{\mathcal{X}^*}(x^k)$ and $\overline{y}^k \in{\rm proj}_{\mathcal{X}^*}(y^k)$. Then, from \eqref{varphilam_quadgrowth}, we get
 \begin{align*}
  \kappa_2 \|x^k - \overline{x}^k\|^2 & \leq F_*(x^k) - F_*(x^*) \leq F(x^k) - F_*(x^*)  \leq F_{\gamma_k}(y^k) - F_*(x^*) \\
  & \stackrel{\eqref{FBEcomp}}{\le} F_{*} (y^k) - F_*(x^*)\stackrel{\eqref{FBE-descent-lemma}}{\le}  \widetilde{L}\|y^k - \overline{y}^k\|^2,
  \end{align*}
  where the third inequality is due to $x^k\in\mathcal{T}_{\gamma_k}(y^k)$ and Lemma \ref{lemma-PGdescent} (ii). Rearranging the above inequality leads to ${\rm dist}(x^k,\mathcal{X}^*) \leq \sqrt{\widetilde{L}\kappa_2^{-1}}  {\rm dist}(y^k,\mathcal{X}^*)$.
   Combining the above inequality with \eqref{superlineardk} results in 
   $\limsup_{k\to\infty} \frac{{\rm dist}(y^{k+1}, \mathcal{X}^*)}{{\rm dist}(y^k, \mathcal{X}^*)^{1+\tau}} \leq \widetilde{c}_2\ \ {\rm with}\ \  \widetilde{c}_2 :=\widetilde{c} (\widetilde{L}\kappa_2^{-1})^{\frac{1+\tau}{2}}.$
  Then, $\{{\rm dist}(y^{k},\mathcal{X}^*)\}_{k\in\mathbb{N}}$ converges to $0$ at a superlinear convergence rate.
 \end{proof}
 \begin{remark}
     We take a closer look at conditions {\it (C1)-(C3)} used for the local superlinear convergence of Algorithm \ref{hybrid}. 
 First, when $\rho_1 = 1$, condition {\it (C1)} is the calmness of multifunction $\partial^2 F_{*}$ at $\overline{x}$ \cite[Eq. 9(30)]{RW09}. When $f$ is a quadratic function and $g$ is a piece-wise linear quadratic function, $\partial^2F_*$ is a polyhedral multifunction by Proposition \ref{prop-CJac}, which implies that $\partial^2F_*$ is locally upper Lipschitzian at every $x\in\mathcal{O}$ by \cite[Proposition 1]{robinson1981some}.
 Condition {\it (C3)} is equivalent to the (b)-regularity of $(\mathcal{R}_*,\partial_{C}\mathcal{R}_{*})$ along $\mathcal{O}$ at $x^*$ with exponent $1+\rho_2$ (see \cite[Definition 1]{charisopoulos2024superlinearly}), which is actually the $\rho_2$-order uniform semismoothness of $\mathcal{R}_*$ at $x^*$. As will be shown in Proposition \ref{exam-proxmap} later, for the function $F$ with $f$ satisfying Assumption \ref{ass-g} and $g$ coming from \eqref{exam-g}, condition {\it(C3)} holds with $\rho_2=1$, and so does condition {\it (C1)} for $\rho_1=1$.

  Recall that when $f$ is a quadratic function, $\partial^2 F_*$ is identical to $\partial_C(\nabla F_{*})$.
  It was shown in \cite[Theorem 3.1]{hiriart1984generalized} that for $\mathcal{C}^{1,1}$ function $h$, if $\overline{x}$ is a local minimum of $h$, then there must exist $H \in \partial_C(\nabla h)(\overline{x})$ such that $H\succeq 0$. Based on this, Condition {\it(C2)} further assumes that $H\succeq 0$ for every $H\in \bigcup_{x\in\mathcal{X}^*\cap \mathbb{B}(x^*,\delta)}\partial^2 F_*(x)$. Suppose that $f$ is a quadratic function. 
  From \cite[Theorem 4.4]{themelis21}, if there exists $\varepsilon_1>0$ such that all $\overline{x}^*\in\mathbb{B}(x^*,\varepsilon_1)\cap\mathcal{X}^*$ are the local minimum of $F$, then they are also the local minimum of $F_{*}$; and if in addition $\partial^2 F_{*}$ is single-valued and continuously differentiable on a neighborhood of these $\overline{x}^*$, condition {\it(C2)} holds because $\partial^2 F_{*}(\overline{x}^*)=\{\nabla^2 F_{*}(\overline{x}^*)\}$ and $\nabla^2 F_{*}(\overline{x}^*)$ is positive semidefinite.
 \end{remark}

Now we provide two examples satisfying conditions {\it (C1)} and {\it (C3)}. 
 \begin{proposition}\label{exam-proxmap}
  Consider the functions
  \begin{equation}\label{exam-g}
  {\rm (i)} \  g(\cdot)=\lambda \|\cdot\|_q^q\ {\rm for}\ q=1/2\ {\rm or}\ 2/3, \  {\rm (ii)} \   g(\cdot)=\lambda \|\cdot\|_0+\lambda_0\|B\cdot\|_0 + \delta_{\Omega}(\cdot),
  \end{equation}
  where $B\in\mathbb{R}^{p\times n}$ and $\Omega$ is a box constraint containing $0$ as its interior point.  Let $\overline{x}$ be an accumulation point of the sequence generated by Algorithm \ref{hybrid}. 
  If $\overline{x} \in {\rm int}({\rm dom}g)$, then there exists $\varepsilon>0$ such that $\mathcal{R}_*$ is continuously differentiable, and the multifunction $\partial^2F_{*}$ is single-valued and locally Lipschitz continuous on $\mathbb{B}(\overline{x},\varepsilon)$. 
  \end{proposition}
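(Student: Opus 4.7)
The plan is to reduce both cases to showing that $\mathcal{P}_{\widetilde{L}^{-1}} g$ is $C^{1,1}$ in a neighborhood of $z^*:=\overline{x}-\widetilde{L}^{-1}\nabla f(\overline{x})$. Once established, the formula $\mathcal{R}_*(x)=\widetilde{L}(x-\mathcal{P}_*g(x-\widetilde{L}^{-1}\nabla f(x)))$ combined with Assumption \ref{ass-g} (i) yields the continuous differentiability of $\mathcal{R}_*$, and the representation in \eqref{GJac-Fgam} collapses $\partial^2 F_*$ to a single symmetric matrix depending Lipschitz-continuously on $x$. A critical background fact to be used throughout is Proposition \ref{prop-supp} (ii), which already guarantees local Lipschitz continuity of $\mathcal{R}_*$ (hence of $\mathcal{T}_*$ and of $\mathcal{P}_*g$ at $z^*$) on some $\mathbb{B}(\overline{x},\epsilon)\subset\mathcal{O}$.

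For case (i), $g$ is separable so the proximal mapping decouples coordinate-wise. For $q\in\{1/2,2/3\}$ the one-dimensional prox of $t\mapsto \lambda|t|^q$ admits an explicit closed-form: there is a threshold $t_*>0$ such that outside $[-t_*,t_*]$ the prox is a single real-analytic branch bounded away from zero, inside $(-t_*,t_*)$ it equals zero, and set-valuedness occurs only at $\pm t_*$ with a nontrivial jump to a nonzero value. The Lipschitz continuity noted above forces every coordinate of $z^*$ to avoid $\pm t_*$, since such a jump would prevent any Lipschitz estimate in a neighborhood. Consequently each 1D prox is real-analytic at the relevant coordinate of $z^*$, and by separability $\mathcal{P}_*g$ is $C^{\infty}$, in particular $C^{1,1}$, near $z^*$.

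For case (ii), since $\overline{x}\in{\rm int}(\Omega)$ the neighborhood may be shrunk so that $\delta_\Omega$ is locally zero, reducing $g$ to $h(x)=\lambda\|x\|_0+\lambda_0\|Bx\|_0$. For any fixed support pair $(I,J)$ with $I={\rm supp}(x)$ and $J={\rm supp}(Bx)$, the value of $h$ is constant on the corresponding combinatorial cell, so minimizing $\frac{\widetilde{L}}{2}\|x-z\|^2+h(x)$ on this cell yields an affine map $z\mapsto A_{I,J}z+b_{I,J}$, and the prox over $\mathbb{R}^n$ selects the cell with the minimum value. Setting $I^*={\rm supp}(\overline{x})$, $J^*={\rm supp}(B\overline{x})$, the strategy is: (a) use Lemma \ref{prop-supp-Cx} together with the local Lipschitz continuity of $\mathcal{T}_*$ to show that for $z$ near $z^*$ the prox value $\mathcal{T}_*(z)$ stays close to $\overline{x}$ and preserves supports $(I^*,J^*)$, hence equals $A_{I^*,J^*}z+b_{I^*,J^*}$; (b) argue that the cell $(I^*,J^*)$ strictly dominates all competing cells at $z^*$, since otherwise two distinct affine minimizers would coexist in any neighborhood and violate single-valuedness of $\mathcal{T}_*$; (c) propagate strict dominance to a full neighborhood via continuity of the cell-wise objective values, making $\mathcal{P}_*g$ affine, hence $C^{\infty}$, there.

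The hard part will be case (ii), specifically ruling out degenerate ties between the active cell $(I^*,J^*)$ and competing cells at $z^*$. The decisive tool is Proposition \ref{prop-supp} (ii): local single-valuedness and Lipschitz continuity of $\mathcal{T}_*$ at $\overline{x}$ are prior information that any tie would contradict, because a tie spawns two distinct affine minimizers on either side of an affine tie-breaking hyperplane through $z^*$. With strict dominance and cell-wise affine formulas in place, the conclusions about $\mathcal{R}_*$ and $\partial^2 F_*$ follow directly from \eqref{eq-Rx} and \eqref{GJac-Fgam} in both cases.
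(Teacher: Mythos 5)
Your overall route is the same as the paper's: for (i) reduce to the explicit one-dimensional thresholding formula and use the prior single-valuedness/Lipschitz continuity from Proposition \ref{prop-supp} (ii) to rule out the threshold points $\pm t_*$; for (ii) show the pair of supports $({\rm supp}(x),{\rm supp}(Bx))$ of the prox output is locally constant, so that $\mathcal{T}_*$ collapses to the projection onto a fixed subspace composed with $y\mapsto y-\widetilde{L}^{-1}\nabla f(y)$, which is $C^1$ with Lipschitz Jacobian. The paper does exactly this, with its Step 1 (a sequential compactness and outer semicontinuity argument) playing the role of your ``strict dominance'' step (b)--(c).

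There is, however, one concrete missing ingredient in your step (a) for case (ii). You invoke Lemma \ref{prop-supp-Cx} to conclude that $\mathcal{T}_*(z)$ preserves the supports $(I^*,J^*)$ once it is close to $\overline{x}$, but that lemma requires the \emph{uniform} lower bound $\min\{|x|_{\min},|Bx|_{\min}\}\ge\nu>0$ for all prox outputs $x\in\mathcal{T}_*(y)$ with $y$ in a neighborhood of $\overline{x}$ --- not just for $\overline{x}$ itself. Closeness of $\mathcal{T}_*(y)$ to $\overline{x}$ alone only yields ${\rm supp}(\mathcal{T}_*(y))\supset{\rm supp}(\overline{x})$ (nonzero entries persist); it does not exclude the appearance of new, arbitrarily small nonzero entries, which is precisely what would break the reduction to a single affine cell. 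This bound is not free: the paper imports it as \eqref{eq-uniform-lb} from \cite[Lemma 8]{wu23b}, and you need to either cite it or prove the standard ``dead zone'' estimate for the fused zero-norm prox. Two smaller points to tighten: your cells should be the closed subspaces $\{x:{\rm supp}(x)\subset I,\ {\rm supp}(Bx)\subset J\}$ with penalty $\lambda|I|+\lambda_0|J|$ (the exact-support cells are not closed and the cell-wise minimum need not be attained on them), and your tie-breaking argument in (b) must also dispose of ties in which two cells share the same minimizer point at $z^*$ (these do not contradict single-valuedness; they are excluded instead because any competing subspace containing $\overline{x}$ must contain $(I^*,J^*)$ and hence carries a strictly larger penalty). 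With these repairs your argument closes and is equivalent to the paper's.
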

  \begin{proof}
  From Proposition \ref{prop-supp} (ii), $\mathcal{R}_{\widetilde{L}^{-1}}$ is single-valued around $\overline{x}$. Then, the conclusion for (i) holds by the analytical expression of $\mathcal{P}_{\widetilde{L}^{-1} }g$, see \cite{xu12,xu23}.
  Now we consider $g(\cdot)= \lambda\|\cdot\|_0 + \lambda_0\|B\cdot\|_0 + \delta_{\Omega}(\cdot)$. From \cite[Lemma 8]{wu23b}, there exist $\varepsilon_0>0$ and $\nu> 0$ such that 
\begin{equation}\label{eq-uniform-lb}
    \min\{|Bx|_{\min}, |x|_{\min}\} \geq \nu, \ {\rm for} \ x\in \mathcal{T}_{*}(y) \ {\rm with } \ y\in \mathbb{B}(\overline{x},\varepsilon_0).
\end{equation}
Let $\mathcal{S}:\mathbb{R}^n \rightrightarrows \mathbb{R}^n$ be defined as $$\mathcal{S}(x) := \{ z\ | \ {\rm supp}(z) \subset {\rm supp}(x), {\rm supp}(Bz) \subset {\rm supp}(Bx)\}.$$
It is clear that for any $x\in \mathbb{R}^n$, $\mathcal{S}(x)$ is a linear subspace. 
 The proof is divided into two steps.

\noindent
 {\bf Step 1:}
 To prove that there exists $\varepsilon\in (0,\varepsilon_0)$ such that for every $x \in \mathcal{T}_{*}(y)$ with $y\in\mathbb{B}(\overline{x},\varepsilon)\cap \mathcal{O}$, $\mathcal{S}(x) = \mathcal{S}(\overline{x})$.
 We prove the result by contradiction. If the conclusion does not hold, there exist $\{x^k\}_{k\in\mathbb{N}}$ and $\{y^k\}_{k\in\mathbb{N}}$ such that $x^k \in \mathcal{T}_{*}(y^k)$ with $y^k \in \mathbb{B}(\overline{x}, 1/k)\cap \mathcal{O}$ and $\mathcal{S}(x^k) \neq \mathcal{S}(\overline{x})$ for all $k\in\mathbb{N}$. It is not hard to see that $\{x^k\}_{k\in\mathbb{N}}$ is bounded.
 If necessary taking a subsequence, we assume that $\lim_{k\rightarrow \infty} x^k = \widehat{x}$. It follows from \eqref{eq-uniform-lb} that $\min\{ |Bx^k|_{\min}, |x^k|_{\min}\} \geq \nu$ for all sufficiently large $k\in\mathbb{N}$, and $\min\{ |B\widehat{x}|_{\min}, |\widehat{x}|_{\min}\} \geq \nu$. Then, by applying Lemma \ref{prop-supp-Cx}, we have that $\mathcal{S}(x^k) = \mathcal{S}(\widehat{x})$ for sufficiently large $k$, resulting in $\mathcal{S}(\widehat{x}) \neq \mathcal{S}(\overline{x}).$ On the other hand, since $\lim_{k\rightarrow \infty} y^k = \overline{x}$, it follows from \cite[Theorem 1.25]{RW09} that $\widehat{x} \in \mathcal{T}_{*}(\overline{x})$. For the reason that $\mathcal{T}_{*}$ is single-valued at $\overline{x}$ (Proposition \ref{prop-supp} (ii)), we have $\widehat{x} = \overline{x}$, and this is a contradiction to $\mathcal{S}(\widehat{x}) \neq \mathcal{S}(\overline{x})$. The desired result holds.

 \noindent
 {\bf Step 2:} To prove the main result. 
 Pick any $y\in\mathbb{B}(\overline{x},\varepsilon)$. From Step 1, $\mathcal{S}(x) = \mathcal{S}(\overline{x}).$ Next we claim that $x = {\rm proj}_{\mathcal{S}(\overline{x})\cap \Omega}(y-\widetilde{L}^{-1}\nabla\!f(y))$. If not, there exists $\widetilde{x} \neq x$ such that $\widetilde{x} = {\rm proj}_{\mathcal{S}(\overline{x})\cap \Omega}(y-\widetilde{L}^{-1}\nabla\!f(y))$, which implies that 
   $\frac{1}{2}\|\widetilde{x} - (y-\widetilde{L}^{-1}\nabla\!f(y))\|^2 < \frac{1}{2}\|x - (y-\widetilde{L}^{-1}\nabla\!f(y))\|^2.$
 Note that $\widetilde{x}\in \mathcal{S}(\overline{x}) = \mathcal{S}(x)$, so $g(\widetilde{x})\le g(x)$. Combining the above two inequalities yields
 \[
 (1/2)\|\widetilde{x} - (y-\widetilde{L}^{-1}\nabla f(y))\|^2 + g(\widetilde{x}) < (1/2)\|x - (y-\widetilde{L}^{-1}\nabla f(y))\|^2 + g(x),
 \]
 a contradiction to the fact that $x\in \mathcal{T}_{*}(y)$. Thus, for every $y \in\mathbb{B}(\overline{x},\varepsilon)$, it holds that $\mathcal{T}_{*}(y) = {\rm proj}_{\mathcal{S}(\overline{x})\cap \Omega}(y-\widetilde{L}^{-1}\nabla f(y))$.  Note that $\mathcal{T}_*$ is locally Lipschitz continuous around $\overline{x}$.  By shrinking $\varepsilon$ if necessary, we assume that for any $x\in \mathcal{T}_{*}(y)$ with $y\in\mathbb{B}(\overline{x},\varepsilon)$, it holds that $x\in {\rm int}(\Omega)$. This leads to $\mathcal{T}_{*}(y) = {\rm proj}_{\mathcal{S}(\overline{x})}(y-\widetilde{L}^{-1}\nabla f(y))$ for $y\in \mathbb{B}(\overline{x},\varepsilon).$ Since the projection onto a subspace is a linear operator and $f$ satisfies Assumption \ref{ass-g} (i),
 we know that $\mathcal{R}_*$ is continuously differentiable with the Jacobian mapping being locally Lipschitz continuous on $\mathbb{B}(\overline{x},\varepsilon)$. Together with equation \eqref{GJac-Fgam} and Assumption \ref{ass-g} (i), there necessarily exists a neighborhood of $\overline{x}$ on which $\partial^2F_{*}$ is single-valued and locally Lipschitz continuous.
 \end{proof}

 \section{Numerical experiments}\label{sec6}
 
 In this section we apply Algorithm \ref{hybrid} (PGSSN) to solve the $\ell_q$-norm regularized problems \cite{xu12}, and fused zero-norm regularized problems \cite{wu23b}, respectively. All numerical tests are conducted on a laptop running in MATLAB R2024b on a 64-bit Windows System with an Intel(R) Core(TM) i7-13700H CPU 2.40 GHz and 64.0GB RAM.

\subsection{Implementation of PGSSN}
In PG step of PGSSN, we set $\gamma_{0,0} = 1$ and use the Barzilai-Borwein (BB) rule to compute the initial step-size $\gamma_{k,0}$ at $k$-th iteration in the following formula, 
$$ \gamma_{k,0} = \max \left\{ 10^{-20}, \min\left\{ 10^{20}, \frac{(x^k - x^{k-1})^{\top} (x^k - x^{k-1})}{(\nabla f(x^k) - \nabla f(x^{k-1}))^{\top}(x^k - x^{k-1})} \right\}\right\}.$$
We set $\gamma_k = \max\{ \gamma_{k,0}\eta^t , \frac{1}{L+\alpha}\},$ where $t$ is the smallest nonnegative integer such that
\begin{equation}\label{PG-condition}
    x^k \in \mathcal{T}_{\gamma_{k,0}\eta^{t}}(y^k) \ \ {\rm with}  \ \ F(x^k) \leq F_{\gamma_{k,0}\eta^{t}}(y^k) - \alpha \|x^k - y^k\|^2.
\end{equation}
If $\gamma_k = \gamma_{k,0}$, the initial step size determined by the BB rule can be too small, and we turn to conduct one more line search to enlarge its step size. In this case, we search for a largest nonnagative integer $t$ such that $x^k \in \mathcal{T}_{2^{t}\gamma_{k,0}}(y^k)$ and \eqref{PG-condition} is met. Then, we set $\gamma_k = \min\{2^{t}\gamma_{k,0},10^{20}\}$.

In our tested examples, following the proof method in Proposition \ref{exam-proxmap}, we can prove that the single-valuedness of $\mathcal{T}_{\widetilde{\gamma}_k}$ at a point implies its local Lipschitz continuity. Therefore we only judge whether $\mathcal{T}_{\widetilde{\gamma}_k}$ is single-valued in each iteration.

\subsection{$\ell_q$-norm regularized problems}
In this part, we conduct experiments for solving problem \eqref{model} with $g = \lambda\|\cdot\|_q^q$ ($q=1/2$) and $f = f_1$ or $f_2$, where $f_1 := \|A\cdot -b\|^2$ and $f_2 :=\sum_{i=1}^m\log(1+\exp(-b_iA_{i\cdot}\cdot))$.
We compare PGSSN with two existing second order methods. The first one is the BasGSSN (Basic Globalized ${\rm Semismooth}^*$ Newton method), proposed in \cite{gfrerer2024globally}, which can be regarded as a hybrid of PG and ${\rm semismooth}^*$ Newton method. In particular, BasGSSN uses the hybrid framework of ${\rm PANOC}^+$\cite{de2022proximal}, and alternates the PG step and the ${\rm semismooth}^*$ Newton step. The second solver we compare with is PGSSN with Newton step being replaced by the subspace regularized Newton method, and we call it by ${\rm HpgSRN}^{+}$, a modified version of HpgSRN proposed in \cite{wu23}, which solves $0\in \partial F(x)$ in stead of $0\in \mathcal{R}_{\widetilde{\gamma}_k}(x)$.
For these three solvers, we adopt the same randomly picked initial point, and use the same termination condition $ \|\mathcal{R}_{\gamma_k}(y^k)\| \leq 10^{-5}$. We adopt \cite[Algorithm 2]{gfrerer2024globally} to solve problem in step (2b) in Algorithm \ref{hybrid}. 

The data $(A,b)$ we used is from LIBSVM database (see \url{https://www.csie.ntu.edu.tw}). As suggested in \cite{huang10}, for space$\_$ga and triazines, we
expand their original features with polynomial basis functions. We solve \eqref{model} with $\lambda = \lambda_c\|A^\top b\|_{\infty}$ for two different $\lambda_c$'s for these three solvers.  For each test, we record the sparsity of the output (Nnz), the objective value of the output (Obj), the CPU time (Time) and the total number of the iterations (Iter).
 
The numerical comparisons are summarized in Tables \ref{tab1} and \ref{tab2}. In terms of efficiency, with the exception of triazines4, PGSSN consistently achieves the lowest or near-lowest CPU time. Notably, for the example log1p.E2006.train, the CPU time of PGSSN is approximately 70$\%$ of those of ${\rm HpgSRN}^+$ and BasGSSN. However, in triazines4, PGSSN does not perform well in terms of CPU time, while it achieves the lowest function value for two different $\lambda_c.$ For the Logistic regression, PGSSN uses the lowest or near-lowest time among the three solvers for rcv1 and news20 across both $\lambda_c$ values.

Regarding the objective value of the results, the three solvers demonstrate overall comparable performance. However, in certain cases, such as log1p.E2006.train with $\lambda_c = 10^{-4}$, PGSSN yields a solution with the lowest sparsity and the lowest objective value. This suggests that PGSSN can sometimes outperform the other two solvers in terms of the quality of the output objective value.

\begin{table}[!ht]
	\centering
        \scriptsize
	\caption{Numerical comparison on $\ell_q$-norm linear regression with LIBSVM datasets.\label{tab1}}
	\begin{tabular}{lcccccc}
		\toprule
		\begin{tabular}[l]{@{}l@{}}Data\\ $(m,n)$ \end{tabular}  & $\lambda_c$ & Solvers & Nnz& Obj & Time & Iter   \\
  \midrule
		\multirow{6}{*}{\begin{tabular}[l]{@{}l@{}}log1p.E2006.test\\ $(3308,1771946)$ \end{tabular}} 
   & \multirow{3}{*}{$10^{-4}$}  
   & PGSSN & 5 & 2.2334e2 & 425.16 & 97       \\
   & & ${\rm HpgSRN}^+$ & 5 & 2.3704e2 & 628.48 & 173      \\
   & & BasGSSN & 7 & 2.3764e2 & 193.88 & 82   \\

   \cline{2-7}

   & \multirow{3}{*}{$10^{-5}$}       
   & PGSSN & 487 & 1.6306e2 & 1106.66 & 245   \\
   & & ${\rm HpgSRN}^+$ & 511 & 1.6446e2 & 1261.91 & 262     \\
   & & BasGSSN & 484 & 1.7084e2 & 1338.15 & 202     \\

        \midrule
		\multirow{6}{*}{\begin{tabular}[l]{@{}l@{}}log1p.E2006.train\\ $(16087,4265669)$ \end{tabular}}  
   & \multirow{3}{*}{$10^{-4}$}  
   & PGSSN & 4 & 1.1599e3 & 1169.94 & 101       \\
   & & ${\rm HpgSRN}^+$ & 5 & 1.1613e3 & 1728.19 & 158      \\
   & & BasGSSN & 5 & 1.1605e3 & 1590.16 & 87   \\

   \cline{2-7}

   & \multirow{3}{*}{$10^{-5}$}       
   & PGSSN & 206 & 1.0282e3 & 2503.18 & 161   \\
   & & ${\rm HpgSRN}^+$ & 183 & 1.0229e3 & 4288.19 & 287     \\
   & & BasGSSN & 184 & 1.0379e3 & 3674.90 & 152     \\

    \midrule
        
		\multirow{6}{*}{\begin{tabular}[l]{@{}l@{}}space$\_$ga9\\ $(3107,5505)$ \end{tabular}} 
   & \multirow{3}{*}{$10^{-3}$}  
   & PGSSN & 7 & 3.6635e1 & 7.04 & 27       \\
   & & ${\rm HpgSRN}^+$ & 7 & 3.6972e1 & 7.81 & 58      \\
   & & BasGSSN & 7 & 3.7678e1 & 6.33 & 25   \\

   \cline{2-7}

   & \multirow{3}{*}{$10^{-4}$}       
   & PGSSN & 21 & 2.0928e1 & 11.25 & 35   \\
   & & ${\rm HpgSRN}^+$ & 19 & 2.1477e1 & 14.48 & 80     \\
   & & BasGSSN & 19 & 2.1522e1 & 18.61 & 43     \\
   
   \midrule
   
		\multirow{6}{*}{\begin{tabular}[l]{@{}l@{}}triazines4\\ $(186,557845)$ \end{tabular}}  
   & \multirow{3}{*}{$10^{-3}$}  
   & PGSSN & 21 & 1.7421 & 5545.45 & 127       \\
   & & ${\rm HpgSRN}^+$ & 17 & 1.7918 & 3735.47 & 224      \\
   & & BasGSSN & 21 & 1.9685 & 3683.55 & 85   \\

   \cline{2-7}

   & \multirow{3}{*}{$10^{-4}$}       
   & PGSSN & 75 & 5.7595e-1 & 23739.84 & 156   \\
   & & ${\rm HpgSRN}^+$ & 75 & 5.9447e-1 & 13152.14 & 255     \\
   & & BasGSSN & 74 & 5.9063e-1 & 16710.89 & 128     \\
		\bottomrule
	\end{tabular}
\end{table}
\begin{table}[!ht]
	\centering
        \scriptsize
	\caption{Numerical comparison on $\ell_q$-norm Logistic regressions with LIBSVM datasets.\label{tab2}}
	\begin{tabular}{lcccccc}
		\toprule
		\begin{tabular}[l]{@{}l@{}}Data\\ $(m,n)$ \end{tabular} &  $\lambda_c$ & Solvers & Nnz& Obj & Time & Iter \\

        \midrule
		\multirow{6}{*}{\begin{tabular}[l]{@{}l@{}}rcv1\\ $(20242,47236)$ \end{tabular}}  
        & \multirow{3}{*}{$10^{-3}$}  
        & PGSSN & 44 & 91.2285 & 1.252 & 41       \\
        & & ${\rm HpgSRN}^+$ & 45 & 91.0731 & 1.429 & 43      \\
        & & BasGSSN & 65 & 123.6691 & 2.030 & 39   \\

   \cline{2-7}

   & \multirow{3}{*}{$10^{-4}$}       
   & PGSSN & 53 & 11.3970 & 1.415 & 36   \\
   & & ${\rm HpgSRN}^+$ & 55 & 11.5112 & 1.370 & 51     \\
   & & BasGSSN & 71 & 15.0026 & 2.955 & 46     \\

    \midrule
        
		\multirow{6}{*}{\begin{tabular}[l]{@{}l@{}}news20\\ $(19996,1355191)$ \end{tabular}} 
        & \multirow{3}{*}{$10^{-3}$}  
        & PGSSN & 60 & 51.6978 & 13.225 & 40       \\
        & & ${\rm HpgSRN}^+$ & 53 & 51.2514 & 13.708 & 43      \\
        & & BasGSSN & 61 & 52.3313 & 17.945 & 33   \\

   \cline{2-7}

   & \multirow{3}{*}{$10^{-4}$}       
   & PGSSN & 61 & 24.8455 & 11.320 & 36   \\
   & & ${\rm HpgSRN}^+$ & 62 & 24.8602 & 19.648 & 58     \\
   & & BasGSSN & 68 & 25.4644 & 22.605 & 38     \\
		\bottomrule
	\end{tabular}
\end{table}

\subsection{Fused zero-norm regularized problems}
This subsection conducts numerical experiments on fused zero-norm regularized problems. We consider two models, first of which is \eqref{model} with $f = \|A\cdot - b\|^2$ and $g=g_1:=\lambda_0 \|B\cdot\|_0 + \lambda\|\cdot\|_0 + \delta_{\Omega}(\cdot)$,
where $B\in\mathbb{R}^{(n-1)\times n}$ satisfies $Bx = (x_1-x_2,..., x_{n-1} - x_n)$ for every $x\in\mathbb{R}^n$, and $\Omega$ is a box constraint containing the origin. The other is the fused zero norm plus $\ell_q$-norm regularization problems,
i.e., $f = \|A\cdot - b\|^2$ and $g=g_2:= \lambda_0\|B\cdot\|_0 + \lambda\|\cdot\|_q^q$
with $q = 1/2.$ The proximal mapping of $g_1$ can be achieved by a polynomial-time algorithm \cite{wu23b}, and the method for solving the proximal mapping of $g_2$ can be found in the Appendix.  We use PGSSN and PGiPN proposed in \cite{wu23b} to solve the first model, and the PGSSN to solve the second model, and compare their performance in recovery of blurred images. In particular, PGiPN is a hybrid of PG and projected regularized Newton method.

 Let $\overline{x} \in \mathbb{R}^n$ be a vector obtained by vectorizing a $256\times 256$ image ``cameraman.tif'', and be scaled such that all the entries belong to $[0,1]$. Let $A \in \mathbb{R}^{m\times n}$ be a matrix representing a Gaussian blur operator with standard deviation $4$ and a filter size of $9$, and the vector $b\in\mathbb{R}^m$ represent a blurred image obtained by adding Gauss noise $e \sim \mathcal{N}(0,\epsilon)$ with $\epsilon >0$ to $A\overline{x}$, i.e., $b= A\overline{x} + e$, respectively. 

  We denote by PGSSN0 for the case where PGSSN solves model with $g=g_1$, and PGSSNq as the case that PGSSN solves the model with $g=g_2$. We set $\lambda_1 = \lambda_2 = 0.005 \times \|A^{\top}b\|_{\infty}$. Under different $\epsilon$, we compare the performance of these three cases in terms of sparsity and structured sparsity of the outputs (Nnz and BxNnz, respectively), CPU time, number of iterations and the highest peak signal-to-noise ratio (PSNR), where PSNR$:= 10 \log_{10}(\frac{n}{\|\overline{x} - x^*\|})$. This feature is to measure the quality of the recovery. See Table \ref{tab4}. 

Based on Table \ref{tab4}, the performance of the two algorithms, PGSSN0 and PGiPN, shows similarity across various $\epsilon$ values. While for PGSSNq, although it takes more time to find a solution, the PSNR's of the returned images are slightly higher than those of the other two.

\begin{table}[!ht]
    \scriptsize
    \centering
    \caption{Numerical comparison on fused zero-norm regularization problems in recovery of blurred images.\label{tab4}}
    \begin{tabular}{lcccccccc}
        \toprule
        Noise Level & Solvers & Time & Nnz & BxNnz & Iter & PSNR  \\
        \midrule

        \multirow{3}{*}{\begin{tabular}[c]{@{}c@{}}$\epsilon=0.01$\end{tabular}} 
        & PGiPN & 450.63 &  63742 & 6465 & 65 & 25.307  \\
        & PGSSN0 & 401.23  &  63722 & 6385 & 57 & 25.379 \\
        & PGSSNq & 737.92  &  65163 & 6087 & 58 & 25.490  \\
        \midrule

        \multirow{3}{*}{\begin{tabular}[c]{@{}c@{}}$\epsilon=0.02$\end{tabular}} 
        & PGiPN & 603.85 &  63412 & 6651 & 89 & 24.538  \\
        & PGSSN0 & 508.80  &  63407 & 6598 & 67 & 24.612 \\
        & PGSSNq & 1094.31  &  65134 & 6397 & 77 & 24.628  \\
        \midrule

        \multirow{3}{*}{\begin{tabular}[c]{@{}c@{}}$\epsilon=0.03$\end{tabular}} 
        & PGiPN & 429.42 &  62929 & 7239 & 68 & 23.443 \\
        & PGSSN0 & 438.96 &  62894 & 7188 & 58 & 23.553  \\
        & PGSSNq & 1801.39 &  65015 & 6951 & 67 & 23.744  \\
        \midrule

        \multirow{3}{*}{\begin{tabular}[c]{@{}c@{}}$\epsilon=0.04$\end{tabular}} 
        & PGiPN & 1586.82 &  62040 & 7813 & 67 & 22.873  \\
        & PGSSN0 & 1415.73  & 62004 & 7754 & 53 & 22.970  \\
        & PGSSNq & 2224.64  & 64918 & 7628 & 74 & 23.053  \\
        \midrule

        \multirow{3}{*}{\begin{tabular}[c]{@{}c@{}}$\epsilon=0.05$\end{tabular}} 
        & PGiPN & 1652.84  &  61388 & 8452 & 72 & 22.320  \\
        & PGSSN0 & 1753.65  &  61362 & 8396 & 64 & 22.396 \\
        & PGSSNq & 1901.62 &  64806 & 8234 & 77 & 22.424  \\
        \bottomrule
    \end{tabular}
\end{table}

\section{Conclusions}
 For the nonconvex and nonsmooth composite problem \eqref{model}, we proposed a globalized semismooth Newton method. By exploiting the single-valuedness and local Lipschitz continuity of the proximal residual function, we used the semismooth Newton method to solve the system. The convergence of the  sequence to an $L$-stationary point was achieved by virtue of the KL exponent belong to $[\frac{1}{2}, \frac{1}{2-\varrho}]$ for $\varrho \in (0,1)$. If in addition the limit point is assumed to be a local minimum of $F$, at which the proximal residual function is metrically subregular, then the distance of the iterates from the set of $L$-stationary point is shown to be superlinearly convergent under mild conditions. The preliminary numerical experiments showed that the efficiency of our proposed algorithm is comparable to that of GSSN proposed in \cite{gfrerer2024globally}.  
 

\bibliographystyle{siam}
\bibliography{references}

\begin{thebibliography}{10}

\bibitem{themelis21}
{\sc M.~Ahookhosh, A.~Themelis, and P.~Patrinos}, {\em A {B}regman
  forward-backward linesearch algorithm for nonconvex composite optimization:
  superlinear convergence to nonisolated local minima}, SIAM Journal on
  Optimization, 31 (2021), pp.~653--685.

\bibitem{artacho2013metric}
{\sc F.~J.~A. Artacho and M.~H. Geoffroy}, {\em Metric subregularity of the
  convex subdifferential in banach spaces}, arXiv preprint arXiv:1303.3654,
  (2013).

\bibitem{Attouch09}
{\sc H.~Attouch and J.~Bolte}, {\em On the convergence of the proximal
  algorithm for nonsmooth functions involving analytic features}, Mathematical
  Programming, 116 (2009), pp.~5--16.

\bibitem{Attouch10}
{\sc H.~Attouch, J.~Bolte, P.~Redont, and A.~Soubeyran}, {\em Proximal
  alternating minimization and projection methods for nonconvex problems: An
  approach based on the {K}urdyka-{{\L}}ojasiewicz inequality}, Mathematics of
  Operations Research, 35 (2010), pp.~438--457.

\bibitem{Attouch13}
{\sc H.~Attouch, J.~Bolte, and B.~F. Svaiter}, {\em Convergence of descent
  methods for semi-algebraic and tame problems: proximal algorithms,
  forward--backward splitting, and regularized {G}auss-{S}eidel methods},
  Mathematical Programming, 137 (2013), pp.~91--129.

\bibitem{bareilles22}
{\sc G.~Bareilles, F.~Iutzeler, and J.~Malick}, {\em Newton acceleration on
  manifolds identified by proximal gradient methods}, Mathematical Programming,
   (2022), pp.~1--34.

\bibitem{bertsekas1982projected}
{\sc D.~P. Bertsekas}, {\em Projected {N}ewton methods for optimization
  problems with simple constraints}, SIAM Journal on control and Optimization,
  20 (1982), pp.~221--246.

\bibitem{bhatia2013matrix}
{\sc R.~Bhatia}, {\em Matrix analysis}, vol.~169, Springer Science \& Business
  Media, 2013.

\bibitem{Bolte14}
{\sc J.~Bolte, S.~Sabach, and M.~Teboulle}, {\em Proximal alternating
  linearized minimization for nonconvex and nonsmooth problems}, Mathematical
  Programming, 146 (2014), pp.~459--494.

\bibitem{xu23}
{\sc W.~Cao, J.~Sun, and Z.~Xu}, {\em Fast image deconvolution using
  closed-form thresholding formulas of ${L}_q(q=\frac{1}{2},\frac{2}{3})$
  regularization}, Journal of Visual Communication and Image representation, 24
  (2013), pp.~31--41.

\bibitem{ChanSun08}
{\sc Z.~X. Chan and D.~Sun}, {\em Constraint nondegeneracy, strong regularity,
  and nonsingularity in semidefinite programming}, SIAM Journal on
  optimization, 19 (2008), pp.~370--396.

\bibitem{charisopoulos2024superlinearly}
{\sc V.~Charisopoulos and D.~Davis}, {\em A superlinearly convergent
  subgradient method for sharp semismooth problems}, Mathematics of Operations
  Research, 49 (2024), pp.~1678--1709.

\bibitem{clarke90}
{\sc F.~H. Clarke}, {\em Optimization and nonsmooth analysis}, SIAM, 1990.

\bibitem{de2022proximal}
{\sc A.~De~Marchi and A.~Themelis}, {\em Proximal gradient algorithms under
  local {L}ipschitz gradient continuity: A convergence and robustness analysis
  of {PANOC}}, Journal of Optimization Theory and Applications, 194 (2022),
  pp.~771--794.

\bibitem{fan2001variable}
{\sc J.~Fan and R.~Li}, {\em Variable selection via nonconcave penalized
  likelihood and its oracle properties}, Journal of the American statistical
  Association, 96 (2001), pp.~1348--1360.

\bibitem{gfrerer2024globally}
{\sc H.~Gfrerer}, {\em On a globally convergent semismooth* {N}ewton method in
  nonsmooth nonconvex optimization}, Computational Optimization and
  Applications,  (2025), pp.~1--58.

\bibitem{gfrerer2021semismooth}
{\sc H.~Gfrerer and J.~V. Outrata}, {\em On a semismooth* {N}ewton method for
  solving generalized equations}, SIAM Journal on Optimization, 31 (2021),
  pp.~489--517.

\bibitem{hiriart1984generalized}
{\sc J.-B. Hiriart-Urruty, J.-J. Strodiot, and V.~H. Nguyen}, {\em Generalized
  {H}essian matrix and second-order optimality conditions for problems with
  ${C}_{1,1}$ data}, Applied mathematics and optimization, 11 (1984),
  pp.~43--56.

\bibitem{hu2023projected}
{\sc J.~Hu, K.~Deng, J.~Wu, and Q.~Li}, {\em A projected semismooth {N}ewton
  method for a class of nonconvex composite programs with strong
  prox-regularity}, Journal of Machine Learning Research, 25 (2024), pp.~1--32.

\bibitem{hu2022local}
{\sc J.~Hu, T.~Tian, S.~Pan, and Z.~Wen}, {\em On the local convergence of the
  semismooth {N}ewton method for composite optimization}, arXiv preprint
  arXiv:2211.01127,  (2022).

\bibitem{huang10}
{\sc L.~Huang, J.~Jia, B.~Yu, B.-G. Chun, P.~Maniatis, and M.~Naik}, {\em
  Predicting execution time of computer programs using sparse polynomial
  regression}, Advances in Neural Information Processing Systems, 23 (2010),
  pp.~883--891.

\bibitem{Kanzow21}
{\sc C.~Kanzow and T.~Lechner}, {\em Globalized inexact proximal {N}ewton-type
  methods for nonconvex composite functions}, Computational Optimization and
  Applications, 78 (2021), pp.~377--410.

\bibitem{khanh2024coderivative}
{\sc P.~D. Khanh, B.~S. Mordukhovich, and V.~T. Phat}, {\em Coderivative-based
  {N}ewton methods in structured nonconvex and nonsmooth optimization}, arXiv
  preprint arXiv:2403.04262,  (2024).

\bibitem{khanh2023globally}
{\sc P.~D. Khanh, B.~S. Mordukhovich, V.~T. Phat, and D.~B. Tran}, {\em
  Globally convergent coderivative-based generalized {N}ewton methods in
  nonsmooth optimization}, Mathematical Programming,  (2023), pp.~1--57.

\bibitem{Lee14}
{\sc J.~D. Lee, Y.~Sun, and M.~A. Saunders}, {\em Proximal {N}ewton-type
  methods for minimizing composite functions}, SIAM Journal on Optimization, 24
  (2014), pp.~1420--1443.

\bibitem{lipong18}
{\sc G.~Li and T.~K. Pong}, {\em Calculus of the exponent of
  {K}urdyka--{{\L}}ojasiewicz inequality and its applications to linear
  convergence of first-order methods}, Foundations of Computational
  Mathematics, 18 (2018), pp.~1199--1232.

\bibitem{liebscher1999potts}
{\sc V.~Liebscher and G.~Winkler}, {\em A potts model for segmentation and
  jump-detection}, in Proceedings S4G International Conference on Stereology,
  Spatial Statistics and Stochastic Geometry, Prague June, vol.~21, Citeseer,
  1999, pp.~185--190.

\bibitem{liu2024inexact}
{\sc R.~Liu, S.~Pan, Y.~Wu, and X.~Yang}, {\em An inexact regularized proximal
  {N}ewton method for nonconvex and nonsmooth optimization}, Computational
  Optimization and Applications, 88 (2024), pp.~603--641.

\bibitem{Luo93}
{\sc Z.-Q. Luo and P.~Tseng}, {\em Error bounds and convergence analysis of
  feasible descent methods: a general approach}, Annals of Operations Research,
  46 (1993), pp.~157--178.

\bibitem{milzarek2014semismooth}
{\sc A.~Milzarek and M.~Ulbrich}, {\em A semismooth {N}ewton method with
  multidimensional filter globalization for $l_1$-optimization}, SIAM Journal
  on Optimization, 24 (2014), pp.~298--333.

\bibitem{mordukhovich2023globally}
{\sc B.~S. Mordukhovich, X.~Yuan, S.~Zeng, and J.~Zhang}, {\em A globally
  convergent proximal {N}ewton-type method in nonsmooth convex optimization},
  Mathematical Programming, 198 (2023), pp.~899--936.

\bibitem{ochs2018local}
{\sc P.~Ochs}, {\em Local convergence of the heavy-ball method and i{P}iano for
  non-convex optimization}, Journal of Optimization Theory and Applications,
  177 (2018), pp.~153--180.

\bibitem{ouyang2024trust}
{\sc W.~Ouyang and A.~Milzarek}, {\em A trust region-type normal map-based
  semismooth {N}ewton method for nonsmooth nonconvex composite optimization},
  Mathematical Programming,  (2024), pp.~1--47.

\bibitem{pang1993nonsmooth}
{\sc J.-S. Pang and L.~Qi}, {\em Nonsmooth equations: motivation and
  algorithms}, SIAM Journal on optimization, 3 (1993), pp.~443--465.

\bibitem{Patrinos13}
{\sc P.~Patrinos and A.~Bemporad}, {\em Proximal {N}ewton methods for convex
  composite optimization}, in 52nd IEEE Conference on Decision and Control,
  IEEE, 2013, pp.~2358--2363.

\bibitem{poliquin96}
{\sc R.~Poliquin and R.~Rockafellar}, {\em Prox-regular functions in
  variational analysis}, Transactions of the American Mathematical Society, 348
  (1996), pp.~1805--1838.

\bibitem{qi1993nonsmooth}
{\sc L.~Qi and J.~Sun}, {\em A nonsmooth version of {N}ewton's method},
  Mathematical programming, 58 (1993), pp.~353--367.

\bibitem{robinson1981some}
{\sc S.~M. Robinson}, {\em Some continuity properties of polyhedral
  multifunctions}, Springer, 1981.

\bibitem{RW09}
{\sc R.~T. Rockafellar and R.~J.-B. Wets}, {\em Variational analysis},
  vol.~317, Springer Science \& Business Media, 2009.

\bibitem{stella17}
{\sc L.~Stella, A.~Themelis, and P.~Patrinos}, {\em Forward--backward
  quasi-{N}ewton methods for nonsmooth optimization problems}, Computational
  Optimization and Applications, 67 (2017), pp.~443--487.

\bibitem{Themelis19}
{\sc A.~Themelis, M.~Ahookhosh, and P.~Patrinos}, {\em On the acceleration of
  forward-backward splitting via an inexact {N}ewton method}, in Splitting
  Algorithms, Modern Operator Theory, and Applications, Springer, 2019,
  pp.~363--412.

\bibitem{Themelis18}
{\sc A.~Themelis, L.~Stella, and P.~Patrinos}, {\em Forward-backward envelope
  for the sum of two nonconvex functions: Further properties and nonmonotone
  linesearch algorithms}, SIAM Journal on Optimization, 28 (2018),
  pp.~2274--2303.

\bibitem{wu21}
{\sc Y.~Wu, S.~Pan, and S.~Bi}, {\em Kurdyka--{{\L}}ojasiewicz property of
  zero-norm composite functions}, Journal of Optimization Theory and
  Applications, 188 (2021), pp.~94--112.

\bibitem{wu23}
{\sc Y.~Wu, S.~Pan, and X.~Yang}, {\em A regularized {N}ewton method for
  $\ell_q$-norm composite optimization problems}, SIAM Journal on Optimization,
  33 (2023), pp.~1676--1706.

\bibitem{wu23b}
\leavevmode\vrule height 2pt depth -1.6pt width 23pt, {\em An inexact projected
  regularized {N}ewton method for fused zero-norms regularization problems},
  Journal of Machine Learning Research, 25 (2024), pp.~1--48.

\bibitem{xiao2018regularized}
{\sc X.~Xiao, Y.~Li, Z.~Wen, and L.~Zhang}, {\em A regularized semi-smooth
  {N}ewton method with projection steps for composite convex programs}, Journal
  of Scientific Computing, 76 (2018), pp.~364--389.

\bibitem{xu12}
{\sc Z.~Xu, X.~Chang, F.~Xu, and H.~Zhang}, {\em $\ell_{1/2}$ regularization:
  {A} thresholding representation theory and a fast solver}, IEEE Transactions
  on Neural Networks and Learning Systems, 23 (2012), pp.~1013--1027.

\bibitem{yu2022kurdyka}
{\sc P.~Yu, G.~Li, and T.~K. Pong}, {\em Kurdyka--{L}ojasiewicz exponent via
  inf-projection}, Foundations of Computational Mathematics, 22 (2022),
  pp.~1171--1217.

\bibitem{Yue19}
{\sc M.-C. Yue, Z.~Zhou, and A.~M.-C. So}, {\em A family of inexact {SQA}
  methods for non-smooth convex minimization with provable convergence
  guarantees based on the {L}uo-{T}seng error bound property}, Mathematical
  Programming, 174 (2019), pp.~327--358.

\bibitem{zhang2010nearly}
{\sc C.-H. Zhang}, {\em Nearly unbiased variable selection under minimax
  concave penalty}, Annals of Statistics,  (2010), pp.~894--942.

\bibitem{zhou2023revisiting}
{\sc S.~Zhou, X.~Xiu, Y.~Wang, and D.~Peng}, {\em Revisiting $l_q (0\leq q< 1)
  $ norm regularized optimization}, arXiv preprint arXiv:2306.14394,  (2023).

\end{thebibliography}

\noindent
{\bf \LARGE Appendix}
\appendix

\section{Proximal mapping of $g$}\label{AppendixB}
In this section, we focus on the computation of proximal mappings of several kinds of $g$ which involves fused zero-norm, and discuss how to verify that its proximal mapping is single-valued.

\subsection{Fused zero-norms regularization}\label{Appendix-proxfzns}
In this part, we consider the case where $g= \lambda_0 \|B\cdot\|_0 + \lambda\|\cdot\|_0 + \delta_{\Omega}(\cdot)$. In what follows, we denote $\Omega = [l, u]$ for some $l, u\in\mathbb{R}^n$ with $l<0<u$.
A polynomial-time algorithm for seeking elements in the proximal mapping of $g$ was proposed in \cite{wu23b}. Although its proximal mapping is computable, to the best of our knowledge, the close-form expression of $\mathcal{P}_{\gamma}g$ is not available, so that we can not directly judge whether it is single-valued or not. Making a slight modification on the algorithm proposed in \cite{wu23b}, we provide a new algorithm for seeking the elements of the proximal mapping, which is able to verify whether $\mathcal{P}_{\gamma}g$ is single-valued.  

Assume that $n \geq 2$. For any given $z\in\mathbb{R}^n$, the proximal mapping of $\mathcal{P}_1g$ at $z$ is the set of optimal solutions to the following problem, 
	\begin{equation}\label{fusedl0-prox}
		\min_{x\in\mathbb{R}^n}\, h(x;z):= \frac{1}{2}\|x-z\|^2+\lambda_0 \|Bx\|_0 + \lambda \|x\|_0 + \delta_{\Omega}(x).
	\end{equation}
	 To simplify the deduction, for each $i\in [n]$, we define $\omega_i:\mathbb{R}\rightarrow \overline{\mathbb{R}}$ by $\omega_i(\alpha) := \lambda |\alpha|_0 + \delta_{[l_i,u_i]}(\alpha)$. It is clear that for all $x\in\mathbb{R}^n$, $\lambda \|x\|_0 +\delta_{\Omega}(x)= \sum_{i=1}^n \omega_i(x_i)$. 
     Let $H(0):= -\lambda_0$ and $B_{[0][1]}:= 0$, and for each $s\in [n]$, define 
 \begin{equation}\label{Hfun}
 \begin{aligned}
         & H(s)\! := \!\min_{y\in\mathbb{R}^s}  h_s(y;z_{1:s}), \\
         & {\rm where} \  h_s(y;z_{1:s})\!:=\!\frac{1}{2}\|y- z_{1:s}\|^2 + \lambda_0 \|B_{[s-1] [s]}y\|_0 +  \sum_{j=1}^s \omega_j(y_j) .
 \end{aligned}
 \end{equation}
 It is immediate to see that $H(n)$ is the optimal value to problem \eqref{fusedl0-prox}. 
    For each $s\in[n]$, define function $P_{s}\!:[0\!:\!s\!-\!1]\times \mathbb{R} \to \overline{\mathbb{R}}$ by 
	\begin{align}\label{Ps-fun}
		P_s(i, \alpha) := H(i) + \frac{1}{2}  \|\alpha {\bf 1} - z_{i+1:s}\|^2 + \sum_{j=i+1}^s \omega_j(y_j)  +\lambda_0.
	\end{align}
    The following lemma is a modified version of \cite[Lemma 9]{wu23b}, which provides a line to search for the elements of the proximal mapping of $g$.
    \begin{lemma}\label{prop-transform}
     Fix any $s\in [n]$. The following statements are true.
     \begin{itemize}
         \item[{\rm (i)}] $H(s) = \min_{i\in[0:s-1], \alpha \in \mathbb{R}} P_s(i,\alpha)$.
         \item[{\rm(ii)}] $(i_s^*, \alpha_s^*) \in \mathop{\arg\min}_{i\in[0:s-1], \alpha \in \mathbb{R}} P_s(i,\alpha)$ if and only if $y^* = (y^*_{1:i_s^*}; \alpha_s^* {\bf 1})$ is a global solution of $\min_{y\in\mathbb{R}^s} h_{s}(y; z_{1:s})$ with $y_{1:i_s^*}^*\in \mathop{\arg\min}_{v\in\mathbb{R}^{i_s^*}} h_{i_s^*}(v;z_{1:i_s^*})$ and $y^*_{i_s^*} \neq \alpha_s^*$ if $i_s^* \neq 0$.
     \end{itemize}
    \end{lemma}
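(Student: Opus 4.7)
The plan is to exploit the unique decomposition of any feasible $y \in \mathbb{R}^s$ according to its ``last plateau''. For any $y \in \mathbb{R}^s$, let $\alpha := y_s$ and let $i \in [0\!:\!s\!-\!1]$ be the largest index such that $y_{i} \neq \alpha$ (setting $i = 0$ if $y$ is constant). Then $y_{i+1} = \cdots = y_s = \alpha$, and when $i \geq 1$ we have $y_i \neq \alpha$, so the term $\|B_{[s-1][s]} y\|_0$ splits as $\|B_{[i-1][i]} y_{1:i}\|_0 + \mathbb{1}_{\{i \geq 1\}}$, while the quadratic and separable penalty terms split block-wise. This yields the key identity $h_s(y; z_{1:s}) = h_i(y_{1:i}; z_{1:i}) + \tfrac{1}{2}\|\alpha {\bf 1} - z_{i+1:s}\|^2 + \sum_{j=i+1}^s \omega_j(\alpha) + \lambda_0\, \mathbb{1}_{\{i\geq 1\}}$, which combined with the conventions $H(0) = -\lambda_0$ and $B_{[0][1]} = 0$ gives the uniform expression $h_s(y; z_{1:s}) = P_s(i,\alpha) - \lambda_0\, \mathbb{1}_{\{i=0\}} = P_s(i,\alpha)$ when $i\geq 1$ and $y_i \neq \alpha$, and $h_s(y; z_{1:s}) = P_s(0,\alpha)$ when $i=0$.

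For part (i), the direction $H(s) \geq \min_{i,\alpha} P_s(i,\alpha)$ will follow by picking any minimizer of $h_s$, applying the above decomposition, and using $h_i(y_{1:i}; z_{1:i}) \geq H(i)$. The reverse $H(s) \leq \min_{i,\alpha} P_s(i,\alpha)$ will follow by fixing an arbitrary $(i,\alpha)$, choosing $y_{1:i}^\circ \in \arg\min h_i(\cdot; z_{1:i})$ (existence guaranteed by the compactness of $\Omega$ and lower semicontinuity of $\omega_j$), and evaluating $h_s$ at $y^\circ := (y_{1:i}^\circ; \alpha {\bf 1})$: whether or not $y_i^\circ$ happens to equal $\alpha$, the value $h_s(y^\circ; z_{1:s})$ is at most $P_s(i,\alpha)$ (strictly less precisely when $i\geq 1$ and the tail happens to glue with $y_i^\circ$, since then the jump-count drops by one and saves a $\lambda_0$). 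Both inequalities together give the claim.

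For part (ii), the ``$\Leftarrow$'' direction will use the same identity in reverse: when $y_{i_s^*}^* \neq \alpha_s^*$ (or $i_s^* = 0$), the value $h_s(y^*; z_{1:s})$ equals $P_s(i_s^*, \alpha_s^*)$ exactly, which by hypothesis equals $H(s) = \min P_s$ via part (i), so $(i_s^*, \alpha_s^*)$ minimizes $P_s$. For ``$\Rightarrow$'', given $(i_s^*, \alpha_s^*) \in \arg\min P_s$, pick $y_{1:i_s^*}^* \in \arg\min h_{i_s^*}$, set $y^* := (y_{1:i_s^*}^*; \alpha_s^* {\bf 1})$, and observe that by the key identity $h_s(y^*; z_{1:s}) \leq P_s(i_s^*, \alpha_s^*) = H(s)$; combined with $h_s(y^*; z_{1:s}) \geq H(s)$ this forces global optimality. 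The separation $y_{i_s^*}^* \neq \alpha_s^*$ when $i_s^* \neq 0$ follows by contradiction: if equality held, then $h_s(y^*; z_{1:s})$ would drop strictly below $P_s(i_s^*, \alpha_s^*)$ by exactly $\lambda_0 > 0$, contradicting the minimality of $H(s)$.

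The main subtlety, which I expect to highlight carefully, is the correct bookkeeping of the boundary case $i = 0$ against $i \geq 1$: the convention $H(0) = -\lambda_0$ and the $+\lambda_0$ inside the definition of $P_s$ are precisely designed so that $P_s(0, \alpha)$ records the plateau $y \equiv \alpha {\bf 1}$ without a phantom transition, while $P_s(i, \alpha)$ with $i \geq 1$ prices in one transition at position $i$. Once this is in place, both parts of the lemma reduce to the explicit identity relating $h_s$ and $P_s$ under the plateau decomposition, and no further delicate argument is needed beyond the strict gap of $\lambda_0$ that drives the uniqueness-type statement $y_{i_s^*}^* \neq \alpha_s^*$ in part (ii).
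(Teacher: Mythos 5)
Your proposal is correct and takes essentially the same route as the paper: the last-plateau decomposition of $y$ and the exact $\lambda_0$-gap when $y^*_{i_s^*}=\alpha_s^*$ are precisely the mechanisms in the paper's proof of (ii), and for (i) (which the paper simply delegates to \cite[Lemma 9]{wu23b}) you supply a direct argument based on the same decomposition. One small imprecision: the displayed ``key identity'' $h_s(y;z_{1:s})=P_s(i,\alpha)$ does not hold for arbitrary $y$ with last plateau $(i,\alpha)$ --- since $P_s(i,\alpha)$ contains $H(i)$ rather than $h_i(y_{1:i};z_{1:i})$, the general relation is $h_s(y;z_{1:s})\ge P_s(i,\alpha)$ with equality iff $y_{1:i}$ minimizes $h_i(\cdot;z_{1:i})$ --- but your subsequent uses (invoking $h_i(y_{1:i};z_{1:i})\ge H(i)$ in one direction and constructing $y_{1:i}$ as a minimizer in the other) account for this correctly, so it is a matter of wording rather than a gap.
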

    \begin{proof}
    The proof of (i) can be found in \cite[Lemma 9]{wu23b}. Next, we prove (ii). 

    \noindent
    $\Longrightarrow:$
    If $i_s^* \neq 0$, by part (i) and the definitions of $\alpha_s^*$ and $i_s^*$, 
        \begin{align*}
            H(s) & = \min_{i\in[0:s-1],\alpha \in \mathbb{R}} P_s(i,\alpha) = H(i_s^*) + \frac{1}{2}  \|\alpha_s^*{\bf 1} - z_{i_s^*+1:s}\|^2 + \sum_{j=i_s^*+1}^s\omega_j(\alpha_s^*) +\lambda_0 \\
            & = h_{i^*_s}(y^*_{1:i_s^*};z_{1:i_s^*})  + \frac{1}{2}  \|y^*_{i_s^*+1:s} - z_{i_s^*+1:s}\|^2 + \sum_{j=i_s^*+1}^s\omega_j(y^*_{j}) +\lambda_0 \geq h_{s}(y^*; z_{1:s}),
        \end{align*}
        where the last inequality follows by the definition of $h_s(\cdot, z_{1:s})$. From the definition of $H(s)$, we must have $H(s) = h_s(y^*;z_{1:s})$, which together with the above inequality implies that $y^*_{i_s^*} \neq \alpha_s^*.$
        If $i_s^* = 0$, 
        $$H(s) = \min_{i\in[0:s-1],\alpha \in \mathbb{R}} P_s(i,\alpha) = H(0) + \frac{1}{2}  \|y^* - z_{1:s}\|^2 + \sum_{j=1}^s\omega_j(y^*_{j}) +\lambda_0 = h_s(y^*;z_{1:s}),$$
        Therefore, $H(s) \geq h_s(y^*;z_{1:s}).$ Along with the definition of $H(s)$, $H(s) = h_s(y^*;z_{1:s})$. 

      \noindent
      $\Longleftarrow$: The case $i_s^* = 0$ is clear. Now we consider $i_s^*\neq 0$. From (i) we have 
    \begin{align*}
         \min_{i\in[0:s-1], \alpha \in \mathbb{R}} P_s(i,\alpha) & = H(s) = h_s(y^*; \alpha_s^* {\bf 1}) \\
        & = h_{i_s^*}(y^*_{1:i_s^*};z_{1:i_s^*}) + \lambda_0 + \frac{1}{2}\|\alpha_s^*{\bf 1} - z_{i_s^*+1:s}\|^2 + \sum_{j=i_s^*+1}^s\omega_j(\alpha_s^*) \\
        & = H(i_s^*) + \lambda_0 + \frac{1}{2}\|\alpha_s^*{\bf 1} - z_{i_s^*+1:s}\|^2 + \sum_{j=i_s^*+1}^s\omega_j(\alpha_s^*), 
    \end{align*}
    where the third equality follows by the definition of $h_s$ and $y^*_{i_s^*} \neq \alpha_s^*.$ The above equation implies that $(i_s^*,\alpha_*)$ is a global minimum of $P_s(i,\alpha).$
    \end{proof}
  
   By virtue of this lemma, problem \eqref{fusedl0-prox} can be recast as a mixed integer programming with the objective function given in \eqref{Ps-fun}. As pointed out in \cite{wu23b}, Lemma \ref{prop-transform} suggests a recursive method to obtain an optimal solution to problem \eqref{fusedl0-prox}. An outline is present as follows.
   \begin{numcases}{}
 {\rm Set\ the\ current\ changepoint}\ s = n. \notag \\{\bf\rm  While} \ s > 0 \  {\bf\rm do} \notag\\ \quad \quad {\rm Find} \ (i_{s}^*, \alpha_{s}^*) \in \mathop{\arg\min}_{i\in [0:s-1], \alpha\in\mathbb{R}} P_{s}(i,\alpha).\notag\\ 
 \quad \quad {\rm Let} \ x^*_{i_s^*+1:s} = \alpha_{s}^*{\bf 1} \ {\rm and} \ s \gets i_{s}^*.\notag\\   {\rm End}\notag
 \end{numcases}
 From Lemma \ref{prop-transform} it is not hard to see that $x^*$ obtained from the above procedure is an optimal solution to \eqref{fusedl0-prox}. Then, if $(i_s^*, \alpha_s^*)$ is unique for every $s$ involved in the while loop, the optimal solution to \eqref{fusedl0-prox} is also unique. To give a method to solve $\min_{i\in[0:s-1],\alpha \in \mathbb{R}}P_s(i,\alpha)$, which can also judge whether the optimal solution set of problem \eqref{fusedl0-prox} is a singleton, we provide some preparations.  

\begin{proposition}\label{prop-Ps}
		For each $s\in [n]$, let $P_{s}^*(\alpha):= \min_{i\in[0:s-1]}P_s(i,\alpha)$. 
		\begin{itemize}
			\item[{\rm (i)}]  For all $\alpha\in\mathbb{R}$, if $s=  1$, $P_s^*(\alpha) =  \frac{1}{2}(\alpha-z_1)^2 + \omega_1(\alpha)$; and if $s \in [2:n],$ $$P_s^*(\alpha) = \min\Big\{ P_{s-1}^*(\alpha), \min_{\alpha'\in\mathbb{R}} P_{s-1}^*(\alpha') \!+\! \lambda_0\Big\} \!+\! \frac{1}{2}(\alpha\! -\! z_s)^2\! +\! \omega_s(\alpha).$$   
 \item[{\rm (ii)}] Let $\mathcal{R}_1^0\!:=\mathbb{R}$, and $\mathcal{R}_s^{i}\!:=\mathcal{R}_{s-1}^{i} \cap \mathcal{V}_s^{s-1}$
			for all $s\in[2\!:\!n]$ and $i\in[0\!:\!s\!-\!2]$, where 
\begin{align*}
    & \mathcal{R}_s^{s-1}:= \left\{\alpha\in\mathbb{R} \ | \ P_{s-1}^*(\alpha) \geq  \min_{\alpha'\in\mathbb{R}} P_{s-1}^*(\alpha') +\lambda_0 \right\}, \\
    & \mathcal{V}_s^{s-1}:= \left\{\alpha\in\mathbb{R} \ | \ P_{s-1}^*(\alpha) \leq \min_{\alpha'\in\mathbb{R}} P_{s-1}^*(\alpha') +\lambda_0 \right\}.
\end{align*}  
     Then, for each $s\in[n]$ and $i\in [0\!:\!s\!-\!1]$,  $P_s^*(\alpha) = P_s(i,\alpha)$ if and only if $\alpha \in \mathcal{R}_s^{i}$.
		\end{itemize} 
	\end{proposition}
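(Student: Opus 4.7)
The plan is to prove the two parts sequentially, with part (i) by direct algebraic manipulation exploiting the additive separability of $P_s(i,\alpha)$ in the last coordinate, and part (ii) by induction on $s$ using part (i).

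For part (i), I would first dispatch the base case $s=1$: since $H(0)=-\lambda_0$ and the only admissible index is $i=0$, the definition of $P_1(0,\alpha)$ immediately gives $P_1^*(\alpha)=\tfrac{1}{2}(\alpha-z_1)^2+\omega_1(\alpha)$. For $s\ge 2$, the key observation is that the summands in $P_s(i,\alpha)$ corresponding to the index $s$ separate cleanly from those for indices $i+1,\dots,s-1$. Concretely, for $i\in[0:s-2]$, one verifies from the definition \eqref{Ps-fun} that
\[
P_s(i,\alpha)=P_{s-1}(i,\alpha)+\tfrac{1}{2}(\alpha-z_s)^2+\omega_s(\alpha),
\]
while for $i=s-1$, using Lemma~\ref{prop-transform}(i) to rewrite $H(s-1)=\min_{\alpha'}P_{s-1}^*(\alpha')$,
\[
P_s(s-1,\alpha)=\min_{\alpha'\in\mathbb{R}}P_{s-1}^*(\alpha')+\lambda_0+\tfrac{1}{2}(\alpha-z_s)^2+\omega_s(\alpha).
\]
Minimizing over $i\in[0:s-2]$ first and then taking the minimum with the $i=s-1$ case yields the claimed recursion.

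For part (ii), I would proceed by induction on $s$. The base case $s=1$ is trivial since $\mathcal{R}_1^0=\mathbb{R}$ and the minimization has only one candidate. For the inductive step, assume the characterization holds at $s-1$; I then split into two cases determined by which branch of the outer minimum in part (i) is achieved. If $i\in[0:s-2]$, then $P_s^*(\alpha)=P_s(i,\alpha)$ is equivalent, by the separation formula above, to the simultaneous conditions $P_{s-1}^*(\alpha)=P_{s-1}(i,\alpha)$ and $P_{s-1}^*(\alpha)\le\min_{\alpha'}P_{s-1}^*(\alpha')+\lambda_0$; the first condition puts $\alpha$ in $\mathcal{R}_{s-1}^i$ by the induction hypothesis, while the second is exactly $\alpha\in\mathcal{V}_s^{s-1}$, so altogether $\alpha\in\mathcal{R}_{s-1}^i\cap\mathcal{V}_s^{s-1}=\mathcal{R}_s^i$. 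If $i=s-1$, then $P_s^*(\alpha)=P_s(s-1,\alpha)$ is equivalent to $\min_{\alpha'}P_{s-1}^*(\alpha')+\lambda_0\le P_{s-1}^*(\alpha)$, which is exactly $\alpha\in\mathcal{R}_s^{s-1}$.

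The argument is largely bookkeeping, so I do not anticipate a major obstacle; the only mild subtlety is ensuring that the recursion correctly handles the boundary cases (the base $s=1$ and the index $i=s-1$) without implicit off-by-one errors, and being careful that the ``if and only if'' in part (ii) relies on the strict separation between the two branches of the minimum only up to equality (hence the use of the closed halfspaces $\mathcal{R}_s^{s-1}$ and $\mathcal{V}_s^{s-1}$ defined by non-strict inequalities, so that the characterizing sets are not disjoint exactly when both branches attain the minimum).
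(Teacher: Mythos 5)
Your proposal is correct and follows essentially the same route as the paper: part (ii) is proved by induction on $s$, splitting on whether $i=s-1$ or $i\in[0\!:\!s\!-\!2]$ and using the additive separation of the last coordinate together with the recursion from part (i); your packaging of the two implications as a single equivalence is only a cosmetic streamlining of the paper's two-direction induction. For part (i) the paper simply cites an external reference, and your direct derivation via $P_s(i,\alpha)=P_{s-1}(i,\alpha)+\tfrac{1}{2}(\alpha-z_s)^2+\omega_s(\alpha)$ for $i\le s-2$ and $H(s-1)=\min_{\alpha'}P_{s-1}^*(\alpha')$ for $i=s-1$ is the intended argument.
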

	\begin{proof}
		 (i) See \cite[Proposition 10]{wu23b}.
		
		\noindent
		 (ii) $\Longleftarrow$: Since for any $\alpha \in \mathbb{R} = \mathcal{R}_1^0$, $P_1^*(\alpha) = P_1(0,\alpha)$, the result holds for $s= 1$. For $s \in [2\!:\!n]$, by the definitions of $\mathcal{R}_s^{s-1}$ and part (i), we have for all $\alpha \in \mathcal{R}_s^{s-1}$, 
		\[
			P_s^*(\alpha)  = \min_{\alpha'\in\mathbb{R}} P_{s-1}^*(\alpha') + \lambda_0 + \frac{1}{2}(\alpha - z_s)^2 + \omega_s(\alpha)  = P_s(s-1,\alpha).
		\]
		  Next we consider $s\in[2\!:\!n]$ and $i \in [0\!:\!s\!-\!2]$.
		We argue by induction that $P_s^*(\alpha) = P_s(i,\alpha)$ when $\alpha \in \mathcal{R}_s^{i}$. Indeed, when $s=2$, since $\mathcal{R}_2^0 = \mathcal{R}_1^0\cap \mathcal{V}_2^1 = \mathcal{V}_2^1$, we conclude that for any $\alpha\in\mathcal{R}_2^0$, $P_{1}^*(\alpha)\leq \min_{\alpha'\in\mathbb{R}}P_1^*(\alpha')+\lambda_0$, which by part (i) implies that $P_2^*(\alpha) =P_{1}^*(\alpha)+\frac{1}{2}(\alpha-z_2)^2+\omega_2(\alpha)=P_{1}(0,\alpha)+\frac{1}{2}(\alpha-z_2)^2+\omega_2(\alpha)=P_2(0,\alpha)$. Assume that the result holds when $s=j$ for some $j\in [2\!:\!n\!-\!1]$. We consider the case for $s=j\!+\!1$. For any $i\in [0\!:\!s\!-\!2]$, by definition, $\mathcal{R}_{s}^i = \mathcal{R}_{s-1}^i \cap \mathcal{V}_{s}^{s-1}$. Then, for any $\alpha \in \mathcal{R}_{s}^i$, 
		\begin{align*}
			P_{s}^*(\alpha) & = P_{s-1}^*(\alpha) + \frac{1}{2}(\alpha -z_{s})^2 + \omega_s(\alpha) = P_{s-1}(i,\alpha)+ \frac{1}{2}(\alpha -z_{s})^2 + \omega_s(\alpha)\\
			& = H(i)+ \frac{1}{2}\|\alpha {\bf 1} - z_{i+1:s-1}\|^2 +\sum_{j=i+1}^{s-1}\omega_j(\alpha) +\lambda_0  + \frac{1}{2}(\alpha\!-\!z_{s})^2 + \omega_s(\alpha) \\
            & = H(i) + \frac{1}{2}\|\alpha {\bf 1} - z_{i+1:s}\|^2 + \sum_{j=i+1}^{s}\omega_j(\alpha)+\lambda_0  = P_{s}(i,\alpha),
		\end{align*}
        where the first equality uses $\alpha \in \mathcal{V}_s^{s-1}$, and the second equality is using $P_{s-1}^*(\alpha)=P_{s-1}(i,\alpha)$ for $\alpha \in \mathcal{R}_{s-1}^i$ implied by induction. Hence, the conclusion holds for $s=j+1$ and any $i\in[0\!:\!s\!-\!2]$. By induction, we obtain the desired result.

        \medskip
        \noindent
        $\Longrightarrow:$ 
        We prove by induction. By definition, $\mathcal{R}_1^0 = \mathbb{R}$. It is clear that the result holds for $s= 1$. Next, we consider $s\in [2\!:\!n]$. 
        If $P_s^*(\alpha)=P_s(s-1,\alpha)$ hold, we have
        $$ P_s^*(\alpha) = H(s-1) + \frac{1}{2}(\alpha - z_s)^2 + \omega_s(\alpha) + \lambda_0 = \min_{\alpha'\in\mathbb{R}}P_{s-1}^*(\alpha') + \frac{1}{2}(\alpha - z_s)^2 + \omega_s(\alpha) + \lambda_0,$$
        which by part (i) implies that $ P_{s-1}^*(\alpha) \geq \min_{\alpha'\in\mathbb{R}} P_{s-1}^*(\alpha') + \lambda_0$, and hence
        $\alpha \in \mathcal{R}_{s}^{s-1}$. Next we consider the case where $s\in [2:n] $ and $ i\in [0\!:\!s\!-\!2]$. Indeed, $P_s^*(\alpha)=P_s(i,\alpha)$ implies
        $P_s(i,\alpha) \leq P_s(s-1,\alpha)$, which by definition yields
        $$ H(i) + \frac{1}{2}\|\alpha {\bf 1} - z_{i+1:s}\|^2 + \lambda_0 + \sum_{j=i+1}^s\omega_j(\alpha) \leq \min_{\alpha'\in\mathbb{R}} P_{s-1}^*(\alpha') + \lambda_0 + \frac{1}{2}(\alpha-z_s)^2 + \omega_s(\alpha).  $$
        Subtracting $\frac{1}{2}(\alpha-z_s)^2+\omega_s(\alpha)$ on both sides yields 
        $$ P_{s-1}^*(\alpha) \leq P_{s-1}(i,\alpha) \leq \min_{\alpha'\in\mathbb{R}} P_{s-1}^*(\alpha') + \lambda_0,$$
        where the first inequality uses the definition of $P_{s-1}^*$,
        and hence $\alpha \in \mathcal{V}_{s}^{s-1}$. Moreover, it follows from $P_s^*(\alpha)=P_s(i,\alpha)$ that 
        $$H(i) +  \frac{1}{2}\|\alpha {\bf 1} - z_{i+1:s}\|^2 + \lambda_0 + \sum_{j=i+1}^s\omega_j(\alpha) = P_s(i,\alpha) = P_{s-1}^*(\alpha) + \frac{1}{2}(\alpha - z_s)^2 + \omega_s(\alpha),$$
     which implies that $P_{s-1}^*(\alpha) = P_{s-1}^i(\alpha)$, and this together with the result in induction yields $\alpha \in \mathcal{R}_{s-1}^i$. Therefore, we have $\alpha \in \mathcal{R}_{s-1}^i \cap \mathcal{V}_{s}^{s-1} = \mathcal{R}_s^i.$ By induction, we obtain the desired result.
 \end{proof}

 By using this proposition, we present the algorithm for seeking the elements of the proximal mapping of $g$ in Algorithm \ref{pruning}. By using Proposition \ref{prop-Ps} (ii), we obtain the following equivalent characterization of the single-valuedness of $\mathcal{P}_{1}g$.
 \begin{proposition}
     The proximal mapping of $g$ is a singleton if and only if $\alpha_s^* = \mathop{\arg\min}_{\alpha \in \mathbb{R}} P_{s}^*(\alpha)$, and  $i_s^* = \left\{i\ \lvert\ \alpha_s^* \in \mathcal{R}_{s}^{i} \right\}$ for every $s$ involved in the while loop of Algorithm \ref{pruning}.
 \end{proposition}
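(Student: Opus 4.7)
The plan is to combine Proposition \ref{prop-Ps} (ii) with Lemma \ref{prop-transform} (ii), iterated along the recursion traced by the while loop. Let $s_0=n>s_1=i_{s_0}^*>\cdots>s_T=0$ denote the sequence of indices visited, and recall that $\mathcal{P}_1g(z)$ is exactly $\arg\min_{y}h_n(y;z_{1:n})$. The strategy has two layers: at each fixed level $s$, translate singleton-ness of $\arg\min_{(i,\alpha)}P_s(i,\alpha)$ into the two listed conditions on $(\alpha_s^*,i_s^*)$; then chain these per-level statements across the recursion via Lemma \ref{prop-transform} (ii) to match singleton-ness of $\mathcal{P}_1g(z)$.

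For the per-level reduction, I would observe that $(i,\alpha)\in\arg\min_{(j,\beta)}P_s(j,\beta)$ iff $\alpha\in\arg\min_\beta P_s^*(\beta)$ and $P_s(i,\alpha)=P_s^*(\alpha)$, because $P_s^*(\beta)=\min_j P_s(j,\beta)$ and $\min_\beta P_s^*(\beta)=H(s)$. By Proposition \ref{prop-Ps} (ii), the second equation is exactly $\alpha\in\mathcal{R}_s^i$. Hence $\arg\min P_s$ is a singleton if and only if $\arg\min_\alpha P_s^*(\alpha)=\{\alpha_s^*\}$ and $\{i\in[0\!:\!s\!-\!1]:\alpha_s^*\in\mathcal{R}_s^i\}=\{i_s^*\}$, which are exactly the two conditions stated.

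The chaining across levels proceeds by induction on $s$; the base case $s=0$ is trivial since the sub-problem is vacuous. Lemma \ref{prop-transform} (ii) provides a one-to-one correspondence between global minimizers $y^*$ of $\min h_s$ and triples $((i^*,\alpha^*),v)$ with $(i^*,\alpha^*)\in\arg\min P_s$ and $v\in\arg\min h_{i^*}$ subject to $v_{i^*}\neq\alpha^*$ when $i^*\neq 0$; the recipe is $y^*=(v;\alpha^*{\bf 1})$, and the separation condition identifies $(i^*,\alpha^*)$ uniquely as the trailing constant run of $y^*$. For the direction $\Rightarrow$: if $\arg\min h_s$ is a singleton, then $\arg\min P_s$ cannot contain two distinct pairs (they would produce two distinct minimizers of $h_s$), so $\arg\min P_s=\{(i_s^*,\alpha_s^*)\}$; moreover every $v\in\arg\min h_{i_s^*}$ must satisfy $v_{i_s^*}\neq\alpha_s^*$ (otherwise the concatenation $(v;\alpha_s^*{\bf 1})$ would decompose as a second pair $(i',\alpha_s^*)\in\arg\min P_s$ with $i'<i_s^*$), and hence singleton-ness of $\arg\min h_s$ transfers to $\arg\min h_{i_s^*}$, to which the inductive hypothesis applies. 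The direction $\Leftarrow$ runs symmetrically via the same bijection.

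The main obstacle I foresee is the careful bookkeeping around the separation condition $y^*_{i_s^*}\neq\alpha_s^*$: this is what makes Lemma \ref{prop-transform} (ii) a genuine bijection rather than a surjection, and it is the precise mechanism through which singleton-ness at level $s$ forces singleton-ness at level $i_s^*$. The degenerate case $i_s^*=0$, where the sub-problem and the separation condition both disappear, is the inductive base and needs only a one-line mention.
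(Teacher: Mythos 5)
Your proof is correct and follows the same route the paper intends: the paper's own proof is a one-line appeal to Lemma \ref{prop-transform} (ii), and your argument is simply the fleshed-out version, combining the per-level translation via Proposition \ref{prop-Ps} (ii) with the recursive chaining via Lemma \ref{prop-transform} (ii). The only cosmetic point is that the automatic validity of the separation condition $v_{i_s^*}\neq\alpha_s^*$ is most directly seen from the fact that a concatenation violating it would have objective value $H(s)-\lambda_0<H(s)$, but your decomposition argument reaches the same contradiction.
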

 \begin{proof}
 The equivalence directly follows by Proposition \ref{prop-transform} (ii).
 \end{proof}

 	\begin{algorithm}[!h]
		\caption{ (Computing ${\rm prox}_{\lambda_1 \|B\cdot\|_0 + \omega(\cdot)}(z)$) }\label{pruning}  
		1. \textbf{Initialize:} Compute $P_1^*(\alpha)= \frac{1}{2}(z_1-\alpha)^2 + \omega_1(\alpha)$ and set $\mathcal{R}_1^0 = \mathbb{R}.$ 
		
		2. {\bf For} $s= 2,\ldots,n\ $  {\bf do}
		
		3. \quad $P_s^*(\alpha):=\min\{ P_{s-1}^*(\alpha), \min_{\alpha'\in\mathbb{R}} P_{s-1}^*(\alpha') + \lambda_1\} +\frac{1}{2}(\alpha- z_s)^2 + \omega_s(\alpha).$
		
		4. \quad Compute $\mathcal{R}_{s}^{s-1}$ and $\mathcal{V}_{s}^{s-1}$.
		
		5. \quad {\bf For} \ $i = 0,\ldots, s-2$ {\bf do}
		
		6. \quad \quad $\mathcal{R}_s^{i} = \mathcal{R}_{s-1}^{i} \cap \mathcal{V}_{s}^{s-1}$.
		
		7. \quad {\bf End}
		
		8. {\bf End}
		
		
		9. Set the current changepoint $s = n$.
		
		10. {\bf While}  $s > 0$  {\bf do}
		
		11.\quad  Find $\alpha_s^* \in \mathop{\arg\min}_{\alpha \in \mathbb{R}} P_{s}^*(\alpha)$, and  $i_s^* \in \left\{i\ \lvert\ \alpha_s^* \in \mathcal{R}_{s}^{i} \right\}$.
		
		12. \quad $x^*_{i_s^*+1:s} = \alpha_s^*{\bf 1}$ and $s\gets i_s^*.$
		
		13. {\bf End}
	\end{algorithm}

\subsection{Proximal mapping of $\lambda_0 \|Bx\|_0 + \vartheta(x)$}
In this part, we consider the proximal mapping of $g:=\lambda_0 \|B\cdot\|_0 + \vartheta(\cdot)$, and aim at proving that if $\vartheta$ satisfies the following assumption, the proximal mapping of $g$ is accessible. The key concept underlying this section is the optimal partitioning of changepoints, as introduced in \cite{liebscher1999potts}. For clarity, we provide a brief overview of this idea below.

\begin{assumption}\label{ass-proxBx}
   {\rm (i)} $\vartheta$ is separable, i.e., $\vartheta(x) = \sum_{i=1}^n \vartheta_i(x_i).$ 
   
   \noindent
   {\rm (ii)} For each $j,k \in [n]$, the proximal mapping of $\sum_{i=j}^k \vartheta_i(\alpha)$ is accessible, i.e., for any $\beta \in \mathbb{R}^n$ and $\gamma >0$, one of the optimal solutions to $\min_{\alpha \in \mathbb{R}} \ \frac{1}{2\gamma}(\alpha-\beta)^2 + \sum_{i=j}^k \vartheta_i(\alpha)$ is computable.
\end{assumption}

For any given $z\in\mathbb{R}^n$, we write $h(x;z) = \frac{1}{2}\|x-z\|^2+ \lambda_0\|Bx\|_0 + \vartheta(x) .$ Let $H(0) = -\lambda_0$ and $B_{[0][1]}:= 0$ and for each $s\in [n]$, define 
\begin{equation}\label{Hfun_vartheta}
         H(s)\! := \!\min_{y\in\mathbb{R}^s}  h_s(y;z_{1:s}), \ {\rm where} \  h_s(y;z_{1:s})\!:=\!\frac{1}{2}\|y- z_{1:s}\|^2 + \lambda_0\|B_{[s-1] [s]}y\|_0 + \sum_{j=1}^s \vartheta_j(y_j).
 \end{equation}
 For each $s\in [n]$, define function $P_s:[0:s-1]\times \mathbb{R} \rightarrow \overline{\mathbb{R}}$ by
 $$ P_s(i,\alpha):= H(i) + \frac{1}{2}\|\alpha {\bf 1} - z_{i+1:s}\|^2 + \sum_{j=i+1}^s \vartheta_j(\alpha) + \lambda_0. $$
By following a similar deduction to that in Proposition \ref{prop-transform}, we have  
$$H(s) = \min_{i\in[s-1], \alpha \in \mathbb{R}} P_s(i,\alpha).$$
By Assumption \ref{ass-proxBx}, for any $s\in [n]$ and $i \in [0:s-1]$, one of the optimal solutions of $\min_{\alpha} P_s(i,\alpha)$ is accessible. 
Making use of this, we calculate $H(i)$ from $i=1$ to $n$ by the following formulas, 
\begin{align*}
    & H(1) = \min_{\alpha \in \mathbb{R}}P_1(0,\alpha);\\
    & H(2) = \min\{ \min_{\alpha \in \mathbb{R}}P_2(0,\alpha), \min_{\alpha \in \mathbb{R}}P_2(1,\alpha)\};\\
    & \ldots \\
    & H(n) = \min\{ \min_{\alpha \in \mathbb{R}}P_n(0,\alpha), \min_{\alpha \in \mathbb{R}}P_n(1,\alpha),\ldots, \min_{\alpha \in \mathbb{R}}P_n(n-1,\alpha)\}.
\end{align*}
Write $Q(i,s) := \min_{\alpha\in\mathbb{R}} \frac{1}{2}\|\alpha {\bf 1} - z_{i+1:s}\|^2 + \sum_{j=i+1}^s \vartheta_j(\alpha).$
Therefore, there exist $k_1,k_2,...,k_t \in [n]$ with $k_1 > k_2>...>k_t$ such that 
\begin{equation}\label{Hnexpress}
\begin{aligned}
    & H(n) = H(k_1) + Q(k_1+1,n) + \lambda,\\
    & H(k_1) = H(k_2) + Q(k_2+1, k_1) + \lambda,\\
    & \ldots \\
    & Q(k_t) = P(1,k_t), 
\end{aligned}
\end{equation}
which implies that 
$$ H(n) = Q(1,k_t)+ Q(k_{t}+1, k_{t-1}) + ...+ Q(k_1+1, n) + t\lambda.$$
By letting $x^* = (v^*_{1:k_t}, v^*_{k_{t}+1:k_{t-1}},\ldots , v^*_{k_1+1:n})$, where $v^*_{j:k} \in \mathop{\arg\min}_{\alpha\in\mathbb{R}} \frac{1}{2}\|\alpha {\bf 1} - z_{j:k}\|^2 + \sum_{i=j}^k \vartheta_i(\alpha)$, we have $h(x^*;z) = H(n)$. That is, $x^*$ is a global minimum of $h(\cdot; z)$.

Fix any $z\in \mathbb{R}^n$. It is clear that if the pair $\{k_1,...,k_t\}$ satisfying \eqref{Hnexpress} is unique, and the corresponding $v^*_{k_i:k_{i+1}}$ is unique for every $i\in [t-1]$, then the proximal mapping of $\lambda_0\|Bx\|_0 + \vartheta(x)$ is a singleton.

\end{document}